\newcommand{\rep@title}{}
\newtheorem*{rep@theorem}{\rep@title}
\newcommand{\newreptheorem}[2]{%
\newenvironment{rep#1}[1]{%
 \def\rep@title{#2 \ref{##1}}%
 \begin{rep@theorem}}%
 {\end{rep@theorem}}}
\newcolumntype{C}[1]{>{\centering\let\newline\\\arraybackslash\hspace{0pt}}m{#1}}
\definecolor{Dgreen}{RGB}{2,100,64}
\newcommand{\Z}{\mathbb{Z}}
\newcommand{\x}{\mathbf{x}}
\newtheorem{definition}{Definition}
\newtheorem{lemma}{Lemma}
\newtheorem{theorem}{Theorem}
\newtheorem{cor}{Corollary}
\newtheorem{conjecture}{Conjecture}
\newtheorem{prop}{Proposition}
\theoremstyle{remark}
\newtheorem{example}{Example}
\newtheorem{remark}{Remark}
\newtheorem*{remark*}{Remark}
\title{Unimodality and Cluster Algebras from Surfaces}
\author{Wonwoo Kang, Kyeongjun Lee, Eunsung Lim}
\date{\today}
\begin{document}

\maketitle

\begin{abstract}
We prove that the rank polynomial of the lattice of order ideals of a loop fence poset is unimodal. This poset arises as the poset of join-irreducibles in the lattice of good matchings of loop graphs associated with notched arcs. Equivalently, such polynomials can be obtained by evaluating all coefficient variables in an $F$-polynomial at a single variable $q$. We also conclude that the rank polynomial of any tagged arc, whether plain or notched, is not only unimodal but also satisfies a symmetry condition known as almost interlacing. Furthermore, when the lamination consists of a single curve, the cluster expansion—evaluated by setting all cluster variables to $1$ and all coefficient variables to $q$—is also unimodal. We conjecture that polynomials in this case are log-concave.
\end{abstract}

\tableofcontents

\section{Introduction}

Cluster algebras were introduced by Fomin and Zelevinsky in 2002 in their foundational work on dual canonical bases~\cite{FZ2002}. Since then, they have been found to intersect with a broad range of mathematical disciplines, including total positivity, quiver representations, Teichm\"uller theory, tropical geometry, Lie theory, and Poisson geometry. Throughout this paper, we refer to the cluster algebras defined in~\cite{FZ2002} as \emph{ordinary cluster algebras}.

A particularly significant subclass of ordinary cluster algebras consists of those associated with surfaces, known as \emph{cluster algebras from surfaces}. For these, various combinatorial frameworks have been developed to describe the cluster variables corresponding to arcs on the surface. In~\cite{musiker2011positivity}, Musiker, Schiffler, and Williams introduced a powerful combinatorial tool known as the \emph{snake graph}. When an arc $\gamma$ has only plain endpoints, the cluster variable $x_\gamma$ admits an expansion with respect to the initial cluster determined by a triangulation $T$ as a generating function over perfect matchings (or dimer coverings) of a planar bipartite graph $\mathcal{G}_{\gamma, T}$, called a snake graph. When $\gamma$ is a closed curve, the appropriate combinatorial object is a \emph{band graph}, obtained by identifying two boundary edges of the terminal tiles of the snake graph. In the case where $\gamma$ has a notched endpoint, the formula in~\cite{musiker2011positivity} uses a specific subset of perfect matchings from the snake graph associated with a related plain arc. Wilson later provided an alternative construction using \emph{loop graphs}~\cite{wilson2020surface}.

The poset of join-irreducibles in the lattice of perfect matchings of a snake graph is always a \emph{fence poset}. Fence posets form a natural class of partially ordered sets that arise not only in the study of cluster algebras but also in quiver representations and various areas of enumerative combinatorics. Their connection to cluster algebras from surfaces has been further explored in~\cite{ouguz2024cluster, pilaud2023posets}, where the authors construct a poset of join-irreducibles in the lattice of good matchings of a loop graph. These posets are referred to as \emph{loop fence posets}.

The relationship between rank polynomials and cluster algebras is mediated through the theory of $F$-polynomials. A fundamental property of cluster algebras is the \emph{Laurent phenomenon}~\cite{FZ2002}, which states that, with respect to a fixed initial cluster, any cluster variable can be expressed as a Laurent polynomial in the initial cluster variables. According to the separation formula of Fomin and Zelevinsky~\cite{FZ2007}, this Laurent polynomial can be factored into a Laurent monomial—encoded by an integer vector called the $g$-vector—and an auxiliary polynomial $F$ with integer coefficients, known as the \emph{$F$-polynomial}. These $F$-polynomials encode the combinatorics of mutation within the cluster algebra. When each coefficient variable $y_i$ in the $F$-polynomial is substituted by a single variable $q$, the resulting polynomial coincides with the rank polynomial of the associated poset.

The variables \(y_i\) in the \(F\)-polynomial are referred to as \emph{coefficient variables}. In cluster algebras from surfaces, these correspond to geometric coefficients realized via \emph{laminations} which were discovered by Fomin and Thurston~\cite{FT-II}. When the lamination consists of a single curve, it is called a \emph{single lamination}. Given an arc $\gamma$ in a triangulation \(T\) with an endpoint \(p\) tagged plain, the \emph{elementary lamination} associated to $\gamma$ is a curve \(L_\gamma\) defined as follows: it terminates at a point \(q\) near \(p\) on a boundary segment, chosen so that the path from \(p\) to \(q\) along the boundary keeps the surface \(S\) on the right. Thus, \(L_\gamma\) agrees with \(\gamma\) except near its endpoints, where it turns slightly to the right to end on a boundary segment or spiral counterclockwise into a puncture. If \(\gamma\) is tagged notched at one of its endpoints, then \(L_\gamma\) instead spirals clockwise into the puncture.

Fence posets have attracted considerable interest in recent years, particularly after a conjecture by Morier-Genoud and Ovsienko~\cite{MGO20}, which asserted that the rank polynomial of the lattice of order ideals of a fence poset---equivalently, the rank polynomial of the lattice of perfect matchings of a snake graph---is unimodal. Beyond this, the authors used these polynomials to define \( q \)-deformed rational numbers. The conjecture was resolved in~\cite{OR23}, where it was shown that the coefficient sequences of these rank polynomials are not only unimodal but also satisfy a certain inequality, called \eqref{eq:ineqA} and defined later in Section 3.1. We define a sequence that is unimodal and satisfies \eqref{eq:ineqA} as \emph{almost interlacing}.

The case of loop fence posets remained open. In this paper, we address this gap by proving that for any notched arc, the rank polynomial of the lattice of order ideals of the associated fence poset is almost interlacing. Consequently, the rank polynomial of any tagged arc---whether plain or notched---is both unimodal and almost interlacing.

Furthermore, we show that in the case of a single lamination on a punctured surface, the associated cluster expansion for an arc $\gamma$—obtained by substituting $1$ for all cluster variables and $q$ for each coefficient variable $y_i$—is also unimodal. Computational experiments support the conjecture that these polynomials are in fact log-concave. However, the standard approach via geometric lattices does not apply in this setting.

The structure of the paper is as follows. Section~\ref{sec:cluster_algebra} provides background on cluster algebras from surfaces. In Section~\ref{sec:combi}, we present the expansion formula using order ideals of fence posets. Section~\ref{sec:uniloop} establishes the unimodality of the rank polynomial in the case of loop fence posets. In Section~\ref{sec:singlelam}, we extend this result to the setting of single laminations and conjecture that log-concavity shows up.

\section{Cluster Algebras from Surfaces}\label{sec:cluster_algebra}

\subsection{Cluster Algebra}
We begin by reviewing the definition of cluster algebras, initially introduced by Fomin and Zelevinsky in \cite{FZ2002}. Our explanation follows the framework set out in \cite{FZ2007}.

Let \(n \leq m\) be positive integers. Let \(\mathcal{F}\) denote the field of rational functions in \(m\) independent variables. Consider a collection of algebraically independent variables \(x_1, \ldots, x_n, x_{n+1}, \ldots, x_m\) within \(\mathcal{F}\). The coefficient ring is defined as \(\mathbb{Z}\mathbb{P} = \mathbb{Z}[x_{n+1}, \ldots, x_m]\). 

\begin{definition}
A \emph{(labeled) seed} is a pair \((\mathbf{x}, B)\), where \(\mathbf{x} = (x_1, \ldots, x_m)\) is a set of variables in \(\mathcal{F}\) that are algebraically independent over \(\mathbb{Z}\mathbb{P}\), and \(B = (b_{jk})_{j \in \{1, 2, \ldots, m\}, k \in \{1, \ldots, n\}}\) is an \(m \times n\) integer matrix, whose sub-matrix with top $n$ rows is skew-symmetric. The variables in any seed are referred to as \emph{cluster variables}. The variables \(x_{n+1}, \ldots, x_m\) are called \emph{frozen variables}.
\end{definition}

We obtain unlabeled seeds from labeled seeds by considering the labeled seeds under equivalence. Specifically, two labeled seeds are identified if one can be transformed into the other via a simultaneous permutation of the components of the cluster $\mathbf{x}$ and the corresponding rows and columns of the exchange matrix $B$.

\begin{definition}
Let $[x]_{+} = \max(x, 0)$. Given a labeled seed $(\mathbf{x}, B)$ in $\mathcal{F}$ and an index $k \in \{1, \dots, n\}$, the \emph{seed mutation} $\mu_k(\mathbf{x}, B)$ in direction $k$ produces a new seed $(\mathbf{x}', B')$ according to the following rules:
\begin{itemize}
    \item The entries $b'_{ij}$ of the mutated exchange matrix $B'$ are given by:
    \[
    b'_{ij} = 
    \begin{cases} 
    -b_{ij} & \text{if } i=k \text{ or } j=k \\ 
    b_{ij} + \frac{|b_{ik}|b_{kj} + b_{ik}|b_{kj}|}{2} & \text{otherwise} 
    \end{cases}
    \]
    (Equivalently: $b'_{ij} = b_{ij} + [-b_{ik}]_{+}b_{kj} + b_{ik}[b_{kj}]_{+}$)
    
    \item The new cluster is $\mathbf{x}' = (x_1, \dots, x_{k-1}, x'_k, x_{k+1}, \dots, x_n)$, where the new cluster variable $x'_k$ is defined by the \emph{exchange relation}:
    \[
    x'_k = \frac{1}{x_k} \left( \prod_{b_{ik} > 0} x_i^{b_{ik}} + \prod_{b_{ik} < 0} x_i^{-b_{ik}} \right)
    \]
\end{itemize}
\end{definition}
\begin{definition}
    Let $\mathbb{T}_n$ represent an $n$-regular tree, where the edges are labeled with numbers $1,\ldots,n$, ensuring that each vertex's $n$ edges have distinct labels. A \emph{seed pattern} is created by associating a labeled seed $\Sigma_t = (\tilde{\x}(t), \tilde{B}(t))$ with each vertex $t \in \mathbb{T}_n$. The seeds assigned to the vertices at the endpoints of any edge \( t \overset{k}{\rule[0.5ex]{2em}{0.5pt}} t' \) are related through a seed mutation in the $k$ direction. A seed pattern is fully determined by any one of its seeds.
\end{definition}

\begin{definition}
    Let $(\tilde{\x}(t), \tilde{B}(t))_{t \in \mathbb{T}_n}$ represent a seed pattern as described earlier. The set of all \emph{cluster variables} appearing in these seeds is defined as
    \[\mathcal{X} = \bigcup_{t \in \mathbb{T}_n} \x(t) = \{x_{i,t} : t \in \mathbb{T}_n, 1 \leq i \leq n\}.\]
    We define the \emph{ground ring} to be $R = \Z[x_{n+1}, \ldots, x_m]$, which is the polynomial ring generated by the frozen variables. The \emph{cluster algebra} $\mathcal{A}$ (of geometric type, over $R$) associated with this seed pattern is the $R$-subalgebra of the ambient field $F$ generated by all cluster variables: $\mathcal{A} = R[\mathcal{X}]$. Specifically, a cluster algebra is the $R$-subalgebra $\mathcal{A}$ as described, along with a fixed seed pattern within it.
\end{definition}

Two fundamental properties of cluster algebras are the \emph{Laurent phenomenon}, which guarantees that every cluster variable is a \emph{Laurent polynomial}, and \emph{positivity}, which asserts that all coefficients in these polynomials are non-negative integers.

\begin{theorem}[Theorem 3.1 of~\cite{FZ2002}]
    Every element of $\mathcal{A}$ is a Laurent polynomial over $R$ in the cluster variables from $\x$.
\end{theorem}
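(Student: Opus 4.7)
The plan is to prove by induction on the distance $d(t,t_0)$ between a seed $\Sigma_t$ and a fixed reference seed $\Sigma_{t_0}$ in $\mathbb{T}_n$ that every cluster variable $x_{i,t}$ lies in the Laurent polynomial ring $R[x_1^{\pm 1},\ldots,x_n^{\pm 1}]$, where $\x(t_0)=(x_1,\ldots,x_m)$. Since $\mathcal{A}=R[\mathcal{X}]$ is generated as an $R$-algebra by the cluster variables, this suffices. For the base case $d=0$ the claim is trivial, and for $d=1$ the exchange relation $x_k' x_k=\prod_i x_i^{[b_{ik}]_+}+\prod_i x_i^{[-b_{ik}]_+}$ directly writes $x_k'$ as a Laurent polynomial over $R$ in $\x(t_0)$ since the right-hand side lies in $\mathbb{Z}[x_1,\ldots,x_m]\subseteq R[x_1,\ldots,x_n]$.

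For the inductive step, the naive approach---substituting the exchange relation for $x_k'$ into the expression obtained from the inductive hypothesis at a neighbor of $t_0$---produces a rational expression whose denominator might in principle contain the \emph{binomial} $\prod_i x_i^{[b_{ik}]_+}+\prod_i x_i^{[-b_{ik}]_+}$, not just a monomial. The heart of the proof is therefore to show that such binomial factors can always be cleared. Following the classical Fomin--Zelevinsky strategy, I would establish the \emph{Caterpillar Lemma}: given any vertex $t$ at distance $d$ from $t_0$, the path from $t_0$ to $t$ in $\mathbb{T}_n$ can be embedded in a suitable ``caterpillar'' subtree whose backbone alternates mutations in two distinguished directions $i,j$, and I would show that two successive mutations along this backbone produce cluster variables that differ from the starting ones only by \emph{monomial} factors in the surrounding variables. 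The key mechanism is that the binomial numerator $M_++M_-$ introduced by one mutation is destroyed by the next exchange in a compatible direction, because the matrix mutation rule forces the exponents to align so that the binomial appears only as an invertible element.

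With the Caterpillar Lemma in hand, the induction on $d(t,t_0)$ proceeds as follows. Let $t_1$ be the neighbor of $t_0$ on the path to $t$, say across an edge labeled $k$. By the inductive hypothesis applied to the base point $t_1$, we have $x_{i,t}\in R[x_1^{\pm 1},\ldots,\widehat{x_k}^{\pm 1},\ldots,x_n^{\pm 1}][(x_k')^{\pm 1}]$. Substituting $x_k'=(M_++M_-)/x_k$ a priori yields a denominator divisible by $(M_++M_-)^N$ for some $N\geq 0$; the Caterpillar Lemma applied along the reversed path from $t$ back toward $t_0$ (with $t_0$ replaced by the appropriate neighbor in the caterpillar) forces $N=0$, so the only remaining denominators are monomials in $x_1,\ldots,x_n$. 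The frozen variables $x_{n+1},\ldots,x_m$, which are never mutated, pass through the whole argument as coefficients in $R$.

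The main obstacle is precisely the cancellation of binomial denominators, i.e., verifying the Caterpillar Lemma. This requires a careful bookkeeping of the matrix mutation rule $b'_{ij}=b_{ij}+[-b_{ik}]_+ b_{kj}+b_{ik}[b_{kj}]_+$ to show that the exponents appearing in two consecutive exchange binomials are compatible enough to produce monomial, rather than polynomial, cancellations. Everything else---the induction on $d(t,t_0)$, the treatment of frozen variables, and the extension from cluster variables to arbitrary elements of $\mathcal{A}$---is a formal consequence once that combinatorial lemma is in place.
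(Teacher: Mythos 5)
This statement is quoted from~\cite{FZ2002} and the paper supplies no proof of its own, so your sketch has to stand on its own; as written, it does not. The entire weight of the argument rests on the Caterpillar Lemma, which you announce (``I would establish'') but never prove, and the mechanism you attribute to it is not the one that makes the classical argument work. It is false in general that two successive mutations along the spine ``produce cluster variables that differ from the starting ones only by monomial factors'': already in rank $2$, mutating $x_1\mapsto x_1'=(1+x_2)/x_1$ and then $x_2\mapsto x_2'=(x_1+1+x_2)/(x_1x_2)$ produces variables that are not monomially related to the old ones. Likewise the exchange binomial $M_++M_-$ is never ``an invertible element'' of a Laurent polynomial ring and is not ``destroyed'' by the next exchange. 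The actual Fomin--Zelevinsky mechanism is a coprimality argument in the UFD $\mathcal{L}(t_0)=R[x_1^{\pm1},\ldots,x_n^{\pm1}]$: for consecutive spine seeds $t_0,t_1,t_2,t_3$ one verifies by direct computation with the mutation rule that (i) the variable created three steps in is again in $\mathcal{L}(t_0)$, and (ii) as elements of $\mathcal{L}(t_0)$ the relevant new variables are pairwise coprime with the exchange binomial $P$ at $t_0$; then an element which the spine induction already places in the Laurent rings of the nearby seeds can, when rewritten over $\mathbf{x}(t_0)$, have only a monomial times a power of $P$ in its denominator, and the gcd conditions force that power to be zero. Verifying (i) and (ii) --- concrete bookkeeping with the exchange binomials, their primitivity, and the matrix mutation rule --- is the real content of the proof, and nothing in your proposal supplies it; you have recorded the difficulty rather than resolved it.

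Two smaller points. Your caterpillar ``backbone alternating mutations in two distinguished directions $i,j$'' is a rank-$2$ picture; in the general caterpillar the spine is an arbitrary word in the $n$ directions with legs at every spine vertex, and the induction runs along the spine with the statement proved uniformly in the base point (your appeal to ``the inductive hypothesis applied to the base point $t_1$'' implicitly requires that uniformity, which should be made explicit). The treatment of frozen variables and the reduction from arbitrary elements of $\mathcal{A}$ to cluster variables are indeed routine, as you say.
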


\begin{theorem}[Corollary 0.4 of~\cite{GHKK18}]
    Every coefficient appearing in any cluster variable is a non-negative integer.
\end{theorem}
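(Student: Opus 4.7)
The plan is to establish positivity by first reducing to the skew-symmetric case and then producing explicit expansion formulas with manifestly non-negative coefficients. For the skew-symmetrizable setting I would appeal to a folding argument: if $B$ is skew-symmetrizable, one embeds the seed into a larger skew-symmetric one via an unfolding, and the non-negativity of Laurent coefficients descends back to the original algebra through the induced algebra homomorphism. This reduces the problem to the skew-symmetric case.

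For skew-symmetric seeds the approach I would pursue follows Gross-Hacking-Keel-Kontsevich. I would construct a consistent scattering diagram attached to the initial seed, consisting of codimension-one walls decorated with wall-crossing automorphisms built from the matrix $B$ and the initial $y$-variables. To each integer point $m$ in the character lattice of the seed one associates a \emph{theta function} $\vartheta_m$, defined combinatorially as a generating function over \emph{broken lines} terminating at $m$. Each broken line contributes a monomial whose coefficient is read off from the scattering functions, and every such coefficient is a non-negative integer by construction. The key step is to show that every cluster monomial coincides with such a $\vartheta_m$ and that the formal series $\vartheta_m$ is in fact a Laurent polynomial in the initial cluster.

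The main obstacle is precisely the consistency of the scattering diagram together with the identification of cluster monomials as theta functions. This requires the technical machinery of GHKK, including the Mumford-type degeneration of the mirror family and an induction on the order of the scattering functions along each wall, which is where the non-triviality of the full positivity theorem concentrates. For the surface cluster algebras central to the present paper a substantially more concrete alternative is available: by the snake graph construction of Musiker-Schiffler-Williams, each cluster variable $x_\gamma$ attached to a tagged arc $\gamma$ equals a sum over perfect matchings (or, for notched arcs and closed curves, good matchings of loop or band graphs) of an explicit planar bipartite graph, weighted by monomials in the initial cluster and frozen variables. Positivity then follows by inspection, and this combinatorial engine is precisely what the rest of the paper exploits to study unimodality.
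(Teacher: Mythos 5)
The paper does not prove this statement at all---it is imported verbatim as a citation to Gross--Hacking--Keel--Kontsevich---and your outline reproduces exactly the strategy of that cited reference (consistent scattering diagrams, broken lines, theta functions, and the identification of cluster monomials with theta functions), together with the Musiker--Schiffler--Williams matching formula that suffices for the surface-type algebras actually used in this paper, so your approach is essentially the same as the source being quoted. One small caveat: your preliminary reduction of the skew-symmetrizable case to the skew-symmetric case via unfolding is neither needed nor known to be available in general (unfoldings are only known to exist for special classes), whereas GHKK's scattering-diagram construction handles skew-symmetrizable fixed data directly, so you should drop that step rather than rely on it.
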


\subsection{Constructing Cluster Algebra from Surfaces}
Let \( S \) be a connected, oriented 2-dimensional Riemann surface with a possibly empty boundary, and let \( M \) be a finite set of marked points on \( S \) such that there is at least one marked point on each boundary component. For technical reasons, we exclude the following cases for the pair $(S, M)$: 
\begin{itemize}
    \item A sphere with fewer than four punctures;
    \item A monogon with fewer than two punctures;
    \item An unpunctured disc with fewer than two marked points on its boundary;
    \item A closed surface with fewer than three punctures (specifically, we exclude those with only two).
\end{itemize}

We start with a brief review of some relevant concepts related to \emph{cluster algebras from surfaces}. We adopt the standard definition of a cluster algebra from \cite{FZ2002} and follow the surface model proposed by Fomin, Shapiro, and D. Thurston in \cite{FST-I}. While we will outline a few necessary definitions, for a more comprehensive discussion, we refer readers to \cite{FST-I} or to expository works such as \cite{glick2017introduction, williams2014cluster}. In this paper, we focus on the subclass of cluster algebras that can be represented by triangulations of marked surfaces, particularly emphasizing \emph{punctured surfaces} where marked points can appear in the interior of the surface. These interior marked points are known as \emph{punctures}.

\begin{definition}
An \emph{arc} $\gamma$ on a surface $(S,M)$ is a non-self-intersecting curve within \(S\) with endpoints in \(M\), but otherwise disjoint from \(M\) and the boundary $\partial S$. Arcs cannot be contractible to the boundary $\partial S$, nor can they cut out an unpunctured monogon or bigon. Arcs are considered up to isotopy. Each arc is oriented, with \(s(\gamma)\) denoting the \emph{starting point} and \(t(\gamma)\) denoting the \emph{terminal point}.
\end{definition}

An \emph{ideal triangulation} of $(S,M)$ is a maximal set of pairwise compatible plain arcs, which decompose $(S,M)$ into triangles. These triangles may share sides; two sides of the same triangle can be glued together, forming a \emph{self-folded triangle}. Since an arc enclosed by a self-folded triangle cannot be flipped, to allow mutation at every cluster variable (or, equivalently, to make all arcs flippable), punctured surfaces require the more nuanced concept of \emph{tagged arcs}.

\begin{definition}
A \emph{tagged arc} is derived from an ordinary arc by assigning each of its endpoints a tag, either \emph{plain} or \emph{notched}. In diagrams, notched endpoints are marked with the $\bowtie$ symbol. Endpoints on the boundary $\partial S$ must be tagged plain, and both endpoints of loops must have identical tagging.
\end{definition}

A tagged triangulation is a maximal collection of pairwise compatible tagged arcs on $(S,M)$, where loops that enclose a once-punctured monogon are not allowed. The compatibility of tagged arcs is defined as follows:

\begin{definition}
For a given tagged arc $\gamma$, let $\gamma^0$ represent the underlying plain arc. Tagged arcs $\alpha$ and $\beta$ in $(S,M)$ are considered \emph{compatible} if and only if the following conditions are met:
\begin{itemize}
    \item The isotopy classes of $\alpha^0$ and $\beta^0$ have non-intersecting representatives;
    \item If $\alpha^0 \neq \beta^0$ but $\alpha$ and $\beta$ share an endpoint $p$, then both arcs must have the same tagging at $p$;
    \item If $\alpha^0 = \beta^0$, then at least one end of $\alpha$ must have the same tagging as the corresponding end of $\beta$.
\end{itemize}    
\end{definition}

In this paper, we consider only tagged triangulations that do not contain self-folded triangles. Consequently, (1) our triangulations will only include plain arcs, and (2) for each puncture \(p\) on the marked surface, there will be at least two distinct arcs from the triangulation incident to \(p\). These arcs are often referred to as \emph{spokes}.

Note that for each ideal triangulation \(T\), we can uniquely assign a tagged triangulation \(\iota(T)\) by replacing a loop that encloses a once-punctured monogon with a singly notched arc, with the notch incident to the enclosed puncture.

\begin{remark}\label{re:incomp_tag_plain}
The compatibility between tagged arcs is invariant under a simultaneous change all tags at a specific puncture. Consequently, for any tagged triangulation $T$, one may perform such a transformation at every puncture where all incident arc ends are notched. This procedure yields a modified tagged triangulation $\hat{T}$ that is the image of an ideal triangulation $T^0$ (possibly containing self-folded triangles) under the mapping $\iota$; $\hat{T} = \iota(T^0)$.
\end{remark}

When considering the intersection of arcs, we always use representatives of the isotopy classes with the minimum number of crossings. For convenience, let \(e(\gamma,\gamma')\) denote the minimum number of crossings between representatives of the isotopy classes of two arcs \(\gamma\) and \(\gamma'\). The first condition from the previous definition can then be restated as requiring \(e(\gamma,\gamma') = 0\) for \(\gamma\) and \(\gamma'\) to be compatible.

In the standard surface model framework, a (possibly punctured) surface \((S,M)\) defines a surface-type cluster algebra \(\mathcal{A}\) with the following correspondences: clusters of \(\mathcal{A}\) correspond to tagged triangulations of \((S,M)\); individual tagged arcs in a triangulation \(T\) correspond to cluster variables of \(\mathcal{A}\), and mutations in \(\mathcal{A}\) correspond to flips of tagged arcs.

For convenience, we often represent tagged arcs using \emph{hooks}. This concept was used by Wilson in \cite{wilson2020surface} to construct \emph{loop graphs} and to derive cluster expansion formulas for tagged arcs, and by Labardini-Fragoso as well as Domínguez to define modules associated with tagged arcs in the Jacobian algebra from a punctured surface \cite{dominguez2017arc}.

\begin{definition}
Let \(T\) be an ideal triangulation of a surface \(S\) and \(\gamma\) be a directed arc on \(S\) with an endpoint at a puncture \(p\). A \emph{hook} at \(p\) is a curve that either:
\begin{itemize}
    \item winds around \(p\) clockwise or counterclockwise, intersecting all spokes incident to \(p\) exactly once, and then follows \(\gamma\), if \(\gamma\) begins at \(p\),
    \item or follows \(\gamma\) and then winds around \(p\) clockwise or counterclockwise, intersecting all spokes incident to \(p\) exactly once, if \(\gamma\) ends at \(p\).
\end{itemize}
\end{definition}
Figure~\ref{fig:hook} illustrates a tagged arc $\gamma^{(p)}$ together with its two possible hook replacements at the notched endpoint.

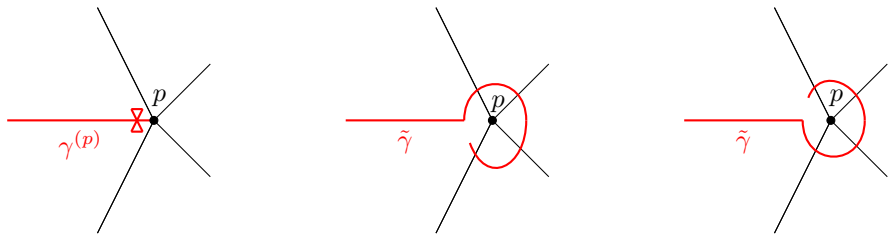
\begin{figure}[H]
    \centering
    \begin{tikzpicture}[scale=1.5]
    \draw (-2,0) to (-2.5,1);
    \draw (-2,0) to (-2.5,-1);
    \draw (-2.5,1) to (-2,0);
    \draw (-2.5,-1) to (-2,0);
    \draw (-2,0) to (-1.5,0.5);
    \draw (-2,0) to (-1.5,-0.5);
    \draw[red,thick] (-3.3,0) to node[midway,below]{$\gamma^{(p)}$} (-2,0);
    \draw[red,thick] (-2.1,0.1) to (-2.2,-0.1);
    \draw[red,thick] (-2.2,0.1) to (-2.1,-0.1);
    \draw[red,thick] (-2.1,0.1) to (-2.2,0.1);
    \draw[red,thick] (-2.1,-0.1) to (-2.2,-0.1);
    \draw[fill=black] (-2,0) circle [radius=1pt];
    \node at (-1.95,0.2) {$p$};

    \draw (1,0) to (0.5,1);
    \draw (1,0) to (0.5,-1);
    \draw (0.5,1) to (1,0);
    \draw (0.5,-1) to (1,0);
    \draw (1,0) to (1.5,0.5);
    \draw (1,0) to (1.5,-0.5);
    \draw[red,thick] (-0.3,0) to node[midway,below]{$\tilde{\gamma}$} (0.75,0);
    \draw[red, thick, out=90,in=90,looseness=2] (0.75,0) to (1.3,0);
    \draw[red, thick, out=-90,in=-70,looseness=2] (1.3,0) to (0.8,-0.2);
    \draw[fill=black] (1,0) circle [radius=1pt];
    \node at (1.05,0.15) {$p$};

    \draw (4,0) to (3.5,1);
    \draw (4,0) to (3.5,-1);
    \draw (3.5,1) to (4,0);
    \draw (3.5,-1) to (4,0);
    \draw (4,0) to (4.5,0.5);
    \draw (4,0) to (4.5,-0.5);
    \draw[red,thick] (2.7,0) to node[midway,below]{$\tilde{\gamma}$} (3.75,0);
    \draw[red, thick, out=-90,in=-90,looseness=2] (3.75,0) to (4.3,0);
    \draw[red, thick, out=90,in=70,looseness=1.5] (4.3,0) to (3.8,0.2);
    \draw[fill=black] (4,0) circle [radius=1pt];
    \node at (4.05,0.2) {$p$};

    \end{tikzpicture}
    \caption{A tagged arc $\gamma^{(p)}$ and its two hook replacements at the notched endpoint.}
    \label{fig:hook}
\end{figure}

Also, we will consider several other families of curves on a surface here, i.e. closed curves and arcs with self-intersection. The latter are sometimes referred in other literatures as \emph{generalized arcs}. If a closed curve has no self-intersections, it is called \emph{essential}.

Following the construction in \cite{FST-I}, one may associate an exchange matrix—and consequently a cluster algebra—with any bordered surface $(S, M)$.

\begin{definition}
Let $T$ be an ideal triangulation with arcs $\{\tau_1, \ldots, \tau_n\}$. For each triangle $\Delta$ in $T$ that is not self-folded, we define a local contribution matrix $B^{\Delta} = (b^{\Delta}_{ij})$ as follows:
\begin{itemize}
    \item The non-zero entries $b^{\Delta}_{ij} \in \{1, -1\}$ are assigned based on the following configurations:
    \begin{enumerate}[label=(\alph*)]
        \item If $\tau_i$ and $\tau_j$ are sides of $\Delta$, then $b^{\Delta}_{ij} = 1$ (and $b^{\Delta}_{ji} = -1$) if $\tau_j$ immediately follows $\tau_i$ in clockwise order.
        \item If $\tau_j$ is the radius of a self-folded triangle enclosed by a loop $\tau_l$, and $\tau_i, \tau_l$ are sides of $\Delta$, then $b^{\Delta}_{ij} = 1$ if $\tau_l$ follows $\tau_i$ clockwisely.
        \item If $\tau_i$ is the radius of a self-folded triangle enclosed by a loop $\tau_l$, and $\tau_l, \tau_j$ are sides of $\Delta$, then $b^{\Delta}_{ij} = 1$ if $\tau_j$ follows $\tau_l$ clockwisely.
    \end{enumerate}
    \item In all other instances, $b^{\Delta}_{ij} = 0$.
\end{itemize}

The \emph{signed adjacency matrix} $B_T = (b_{ij})$ associated with $T$ is obtained by summing these contributions over all non-self-folded triangles: $b_{ij} = \sum_{\Delta} b^{\Delta}_{ij}$.
\end{definition}

\begin{example}
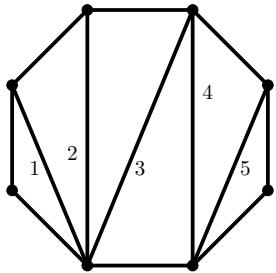
\begin{figure}[H]
\centering
\begin{tikzpicture}
    \draw[line width=0.5mm] (0,0) to (1.4,0);
    \draw[line width=0.5mm] (1.4,0) to (2.4,1);
    \draw[line width=0.5mm] (2.4,2.4) to (2.4,1);
    \draw[line width=0.5mm] (2.4,2.4) to (1.4,3.4);
    \draw[line width=0.5mm] (0,3.4) to (1.4,3.4);
    \draw[line width=0.5mm] (0,3.4) to (-1,2.4);
    \draw[line width=0.5mm] (-1,1) to (0,0);
    \draw[line width=0.5mm] (-1,1) to (-1,2.4);

    \draw[line width=0.5mm] (1.4,0) to (2.4,2.4);
    \draw[line width=0.5mm] (1.4,3.4) to (1.4,0);
    \draw[line width=0.5mm] (1.4,3.4) to (0,0);
    \draw[line width=0.5mm] (0,3.4) to (0,0);
    \draw[line width=0.5mm] (-1,2.4) to (0,0);

    \node[scale=0.8] at (-0.7,1.3) {$1$};
    \node[scale=0.8] at (-0.2,1.5) {$2$};
    \node[scale=0.8] at (0.7,1.3) {$3$};
    \node[scale=0.8] at (1.6,2.3) {$4$};
    \node[scale=0.8] at (2.1,1.3) {$5$};

    \filldraw[black] (0,0) circle (2pt);
    \filldraw[black] (1.4,0) circle (2pt);
    \filldraw[black] (2.4,1) circle (2pt);
    \filldraw[black] (2.4,2.4) circle (2pt);
    \filldraw[black] (1.4,3.4) circle (2pt);
    \filldraw[black] (0,3.4) circle (2pt);
    \filldraw[black] (-1,2.4) circle (2pt);
    \filldraw[black] (-1,1) circle (2pt);

\end{tikzpicture}
\caption{Example of an ideal triangulation $T$ of octagon}
    \label{fig:matrix_example}
\end{figure}

Consider the example in Figure~\ref{fig:matrix_example}. It is an ideal triangulation $T$ of an octagon. one can check that the signed adjacency matrix of this triangulation $T$ is
\[B(T)=\begin{bmatrix}
0 & -1 & 0 & 0 & 0\\
1 & 0 & -1 & 0 & 0\\
0 & 1 & 0 & 1 & 0\\
0 & 0 & -1 & 0 & -1\\
0 & 0 & 0 & 1 & 0
\end{bmatrix}.	
\]
\end{example}

For a tagged triangulation $T$, the signed adjacency matrix $B_T$ is defined by the matrix $B_{T^0}$, where $T^0$ is the ideal triangulation associated with $T$ via the transformation detailed in Remark~\ref{re:incomp_tag_plain}. The index sets for these matrices, which correspond to the sets of tagged and ideal arcs respectively, are identified through a canonical bijection.

\begin{theorem}[Theorem 6.1 of~\cite{FT-II}]
Let $(S, M)$ denote a surface with marked points, and consider an initial triangulation of the surface $S$. A cluster algebra $\mathcal{A}$ associated to the signed adjacency matrix of a tagged triangulation possesses the following characteristics:
\begin{itemize}
    \item There exists a bijection between the seeds and the tagged triangulations of $(S, M)$; and
    \item There exists a bijection between cluster variables in $\mathcal{A}$ and tagged arcs in $(S, M)$.
\end{itemize}
\end{theorem}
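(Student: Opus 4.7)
The plan is to construct the bijection between seeds and tagged triangulations by propagating the correspondence outward from the initial triangulation, using the local compatibility between flips of tagged arcs and mutations of seeds. Concretely, I would fix an initial tagged triangulation $T_0$ of $(S,M)$ together with the seed $\Sigma_{t_0} = (\mathbf{x}(t_0), B_{T_0})$ at the root $t_0$ of $\mathbb{T}_n$, and construct a graph morphism from the flip graph of tagged triangulations to the exchange graph of the cluster pattern sending $T_0 \mapsto \Sigma_{t_0}$ and a flip in direction $k$ to a mutation $\mu_k$. Showing this morphism is an isomorphism yields the first bijection, and the second bijection (between tagged arcs and cluster variables) will drop out by taking, for each tagged arc $\gamma$, any tagged triangulation $T \ni \gamma$ and declaring $x_\gamma$ to be the entry of $\mathbf{x}(T)$ indexed by $\gamma$.

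The first ingredient is that flips are well defined on tagged triangulations: for every $\gamma \in T$ there exists a unique tagged arc $\gamma' \neq \gamma$ with $(T \setminus \{\gamma\}) \cup \{\gamma'\}$ again a tagged triangulation. This is a case analysis over the possible local pictures (plain arcs, arcs notched at one or both punctures, loops), and the ban on loops enclosing a once-punctured monogon is precisely what guarantees the flipped object is always a tagged arc. The second ingredient is a local computation showing that the signed adjacency matrix $B_T$ transforms by $\mu_k$ under flip of the $k$-th tagged arc; for flips remote from punctures this is the classical Fomin--Shapiro--Thurston quadrilateral computation, and flips at a notched endpoint reduce to the plain case by switching all tags at the relevant puncture as in Remark~\ref{re:incomp_tag_plain}, then transferring through $\iota$. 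The third ingredient is connectivity of the tagged flip graph, which can be reduced to the known connectivity for ideal triangulations by routing through the auxiliary triangulation $\hat{T}$ of Remark~\ref{re:incomp_tag_plain}.

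The main obstacle is well-definedness of the morphism: one must check that every closed loop in the flip graph maps to a closed loop in the exchange graph, so that cluster variables (and not merely their mutation classes) are consistently attached to tagged arcs. The standard route is to identify a generating family of elementary loops in the flip graph, namely the digon, the pentagon, and the cycles special to punctured surfaces (those involving two punctures with a tagged configuration of length four, and those involving a single puncture with two incident arcs whose tags are toggled), and verify on each that the corresponding composition of mutations returns the same cluster and exchange matrix via the exchange relation. An alternative that bypasses much of this combinatorics is the decorated Teichm\"uller approach of Fomin--Thurston: realize the generators of $\mathcal{A}$ as lambda lengths of tagged arcs, where tagged endpoints record the choice of horocycle at a puncture, and use the Ptolemy relation on each quadrilateral to identify mutation with flip geometrically; this realization is injective on tagged arcs, which directly produces the second bijection and, together with the flip/mutation compatibility, yields the first.
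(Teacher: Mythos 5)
The paper does not prove this statement at all: it is imported verbatim from Fomin--Thurston \cite{FT-II}, so there is no internal proof to compare against. Judged on its own terms, your outline reproduces the strategy of the original sources (the flip-graph/exchange-graph comparison of Fomin--Shapiro--Thurston, and the lambda-length realization with horocycles and Ptolemy relations, which is literally the method of \cite{FT-II}), so the route is the right one; but as written it is a plan rather than a proof. The substantive gaps are exactly the points you flag without discharging: (1) well-definedness, i.e.\ that the cluster variable attached to a tagged arc $\gamma$ is independent of the path in $\mathbb{T}_n$ (equivalently of the tagged triangulation containing $\gamma$) --- ``declaring $x_\gamma$ to be the entry indexed by $\gamma$'' presupposes this; (2) injectivity, i.e.\ that distinct tagged arcs yield distinct cluster variables, which in the literature is obtained from the lambda-length (or hyperbolic-length) realization rather than from the flip/mutation combinatorics alone; and (3) connectivity of the tagged flip graph, which is not merely a reduction to the ideal case: for a closed surface with exactly one puncture the tagged arc complex is disconnected (the all-plain and all-notched components are not joined by flips), and the bijections in the theorem must be restricted or restated in that case --- a caveat present in \cite{FT-II} but absent both from the statement as quoted in the paper and from your argument. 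So your proposal is a faithful sketch of the standard proof, but completing it requires carrying out the elementary-loop (digon/pentagon and punctured-surface cycles) verification or the geometric lambda-length construction in detail, together with the exceptional-case analysis.
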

\subsection{Lamination and Shear Coordinates}
To construct frozen variables in cluster algebra of surface type, we introduce the notion of lamination and shear coordinate~\cite{Fock2007,FT-II}. 

\begin{definition}
A \emph{lamination} on a bordered surface $(S, M)$ is a collection of finitely many simple, mutually non-intersecting curves in $S$, considered up to isotopy relative to $M$. Each curve must belong to one of the following classes:
\begin{itemize}
    \item A closed curve;
    \item A curve connecting two unmarked boundary points;
    \item A curve connecting an unmarked boundary point to a puncture, into which it spirals (either clockwise or counter-clockwise);
    \item A curve whose both ends spiral into punctures (not necessarily distinct).
\end{itemize}
The following curves are excluded:
\begin{itemize}
    \item Curves that contract to a point or enclose a single puncture (i.e., curves bounding a disk with $\le 1$ puncture);
    \item Curves with boundary endpoints that are isotopic to a segment of the boundary containing at most one marked point.
\end{itemize}
A lamination consisting of exactly one such curve is referred to as a \emph{single lamination}.
\end{definition}

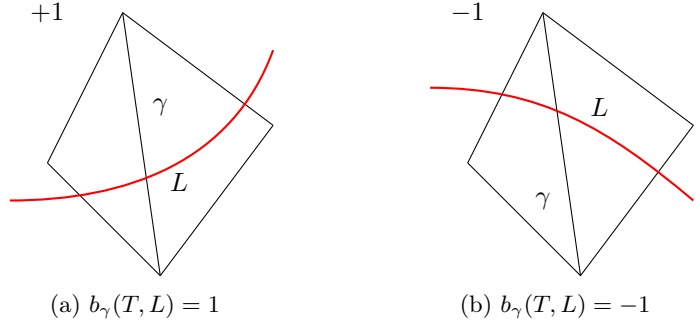
\begin{figure}[H]
    \centering
     \begin{subfigure}[b]{0.2\textwidth}
        \centering
\begin{tikzpicture}[scale=0.5, transform shape]

\draw (-3,-1) to (0,-4);
\draw (0,-4) to (3,0);
\draw (0,-4) to (-1,3);
\draw (3,0) to (-1,3);
\draw (-3,-1) to (-1,3);

\draw[out=0,in=-110,looseness=1,thick, red] (-4,-2) to (3,2);

\node[scale=2] at (0,0.5) {$\gamma$};
\node[scale=2] at (0.5,-1.5) {$L$};
\node[scale=2] at (-3,3) {$+1$};

\end{tikzpicture}
        \caption{$b_{\gamma}(T,L)=1$}
        \label{fig:shearplus}
    \end{subfigure}
    \hspace{2cm}
    \begin{subfigure}[b]{0.2\textwidth}
        \centering
\begin{tikzpicture}[scale=0.5, transform shape]

\draw (-3,-1) to (0,-4);
\draw (0,-4) to (3,0);
\draw (0,-4) to (-1,3);
\draw (3,0) to (-1,3);
\draw (-3,-1) to (-1,3);

\draw[out=0,in=140,looseness=1,thick,red] (-4,1) to (3,-2);

\node[scale=2] at (-1,-2) {$\gamma$};
\node[scale=2] at (0.5,0.5) {$L$};
\node[scale=2] at (-3,3) {$-1$};

\end{tikzpicture}
        \caption{$b_{\gamma}(T,L)=-1$}
        \label{fig:shearmin}
    \end{subfigure}
    \caption{Defining the shear coordinates}
    \label{fig:shear}
\end{figure}

\begin{definition}
Let $L$ be a lamination and $T$ a triangulation containing no self-folded triangles. For each arc $\gamma \in T$, the shear coordinate $b_\gamma(T, L)$ is the sum of local contributions from the intersections of $L$ with $\gamma$. A segment of a curve in $L$ crossing the quadrilateral surrounding $\gamma$ contributes $+1$ if it forms an $S$-shape and $-1$ if it forms a $Z$-shape, as depicted in Figure~\ref{fig:shear}. These two configurations are mutually exclusive for a given arc. Furthermore, while a spiraling curve may technically intersect an arc infinitely many times, only a finite subset of these intersections yield non-zero contributions to $b_\gamma(T, L)$.
\label{def:shear}
\end{definition}

\begin{theorem}[Theorem 13.6 of~\cite{FT-II}]
For a fixed triangulation $T$ without self-folded triangles, the map 
\[ L \mapsto (b_{\gamma}(T, L))_{\gamma \in T} \]
defines a bijection between the set of integral unbounded measured laminations and the integer lattice $\mathbb{Z}^n$.
\end{theorem}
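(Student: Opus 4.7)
The plan is to establish bijectivity by constructing the inverse map explicitly, working one triangle at a time and then gluing along shared arcs and at punctures. I would organize the argument in three parts: showing the shear coordinate map is well-defined, then injective, then surjective.

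For well-definedness, the only delicate case is a curve of $L$ that spirals into a puncture $p$ and could a priori intersect an arc $\gamma$ infinitely many times. I would argue that after finitely many windings around $p$, the curve settles into a stable pattern that closely tracks the cyclic order of spokes at $p$; further crossings with $\gamma$ then occur in $+1/-1$ pairs that cancel in the shear coordinate sum, so $b_\gamma(T,L)$ is a well-defined integer. For injectivity, I would use a local-to-global principle: within each triangle $\Delta$ of $T$, a lamination is determined up to isotopy by the signed transversal crossing data on its three sides together with the spiral direction at any punctured vertex. The crossing data is encoded in the shear coordinates $b_\gamma(T,L)$ of the sides, and the spiral direction at a puncture is read off from the signs of the shear coordinates on its incident spokes. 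Thus any two laminations with identical shear coordinates are pairwise isotopic triangle by triangle, hence globally isotopic.

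For surjectivity, given $(b_1, \ldots, b_n) \in \Z^n$ I would construct a lamination by placing $|b_i|$ parallel strands transverse to each arc $\gamma_i$, arranged in S-configuration if $b_i > 0$ and Z-configuration if $b_i < 0$, and then extending these strands across each triangle so that their terminal ends either reach a boundary segment or spiral into a puncture. The main obstacle is gluing consistently at punctures: at a puncture $p$ with several incident spokes, the strands contributed by the surrounding triangles must close up into a coherent spiraling pattern without creating extraneous self-intersections. Verifying this compatibility—essentially a conservation law for signed crossing counts along small loops encircling each puncture—and checking that the resulting lamination is independent of the order in which gluings are performed is the technical heart of the argument.
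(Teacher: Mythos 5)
First, note that the paper does not prove this statement at all: it is quoted from Thurston's work (and Fomin--Thurston), so there is no in-paper argument to compare with, and your proposal has to stand on its own. As it stands it has genuine gaps. The most serious one is the injectivity step. Your ``local-to-global principle'' asserts that the restriction of a lamination to a triangle is determined by the shear coordinates of its three sides, because ``the crossing data is encoded in the shear coordinates.'' This conflates shear coordinates with normal (crossing-number) coordinates. A strand that cuts off a corner of a triangle crosses two of its sides, yet such a crossing contributes $0$ to the shear coordinate of an arc unless the strand traverses the \emph{quadrilateral} around that arc in an S- or Z-shape (Definition~\ref{def:shear}); whether it does so depends on what the curve does in the \emph{adjacent} triangle. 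The paper's own example (Figure~\ref{fig:lamination}) illustrates this: the curve $L$ crosses arc $2$ but $b_2(T,L)=0$. Consequently two laminations can look different inside a given triangle while the shear coordinates of that triangle's sides agree, and the triangle-by-triangle isotopy argument collapses. Injectivity really does require a global argument, e.g.\ carrying out the inverse construction and verifying it is a two-sided inverse, or relating shear coordinates to intersection coordinates with a spiraling convention.

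Two further points. Your well-definedness argument is also off: the finiteness of the shear coordinate of a spiraling curve is not due to ``$+1/-1$ pairs that cancel'' (an infinite alternating sum would not be well defined anyway), but to the fact that all but finitely many crossings of a spiraling curve pass through the quadrilateral as corner segments, which contribute $0$ by definition. And for surjectivity, placing $|b_i|$ slanted strands across each arc and extending them through triangles is indeed the standard construction, but everything you defer --- that the strand ends can be matched consistently inside each triangle and at each puncture, that the result is an allowable lamination, and above all that the constructed lamination actually has the prescribed shear coordinates --- is precisely the technical content of the theorem. Acknowledging it as ``the technical heart'' without carrying it out leaves the statement unproved; as written, the proposal is a plausible outline of Thurston's argument rather than a proof.
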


\begin{definition}
A \emph{multi-lamination} is a finite collection of laminations $\mathbf{L} = (L_{n+1}, \ldots, L_{m})$. Given such a collection and an ideal triangulation $T$, the extended exchange matrix $\tilde{B} = \tilde{B}(T, \mathbf{L}) = (b_{ij})$ is constructed as follows: the principal $n \times n$ submatrix is the signed adjacency matrix $B(T)$, and the remaining $m-n$ rows are populated by the shear coordinates of the constituent laminations:
\[
b_{n+i, \gamma} = b_{\gamma}(T, L_{n+i}) \quad \text{for } 1 \leq i \leq m-n.
\]
\end{definition}

As established in \cite{FT-II}, the extended matrix $\tilde{B}(T, \mathbf{L})$ is compatible with the mutation of the triangulation $T$.

\begin{example}\label{ex:single}

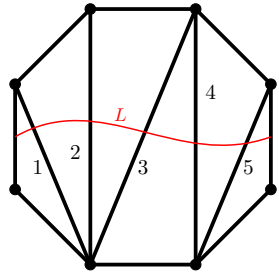
\begin{figure}[H]
    \centering
\begin{tikzpicture}

    \draw[line width=0.5mm] (0,0) to (1.4,0);
    \draw[line width=0.5mm] (1.4,0) to (2.4,1);
    \draw[line width=0.5mm] (2.4,2.4) to (2.4,1);
    \draw[line width=0.5mm] (2.4,2.4) to (1.4,3.4);
    \draw[line width=0.5mm] (0,3.4) to (1.4,3.4);
    \draw[line width=0.5mm] (0,3.4) to (-1,2.4);
    \draw[line width=0.5mm] (-1,1) to (0,0);
    \draw[line width=0.5mm] (-1,1) to (-1,2.4);

    \draw[line width=0.5mm] (1.4,0) to (2.4,2.4);
    \draw[line width=0.5mm] (1.4,3.4) to (1.4,0);
    \draw[line width=0.5mm] (1.4,3.4) to (0,0);
    \draw[line width=0.5mm] (0,3.4) to (0,0);
    \draw[line width=0.5mm] (-1,2.4) to (0,0);

    \node[scale=0.8] at (-0.7,1.3) {$1$};
    \node[scale=0.8] at (-0.2,1.5) {$2$};
    \node[scale=0.8] at (0.7,1.3) {$3$};
    \node[scale=0.8] at (1.6,2.3) {$4$};
    \node[scale=0.8] at (2.1,1.3) {$5$};

    \draw[line width=0.2mm, out=30,in=200,looseness=1, red] (-1,1.7) to (2.4,1.7);

    \node[scale=0.7, red] at (0.4,2) {$L$};

    \filldraw[black] (0,0) circle (2pt);
    \filldraw[black] (1.4,0) circle (2pt);
    \filldraw[black] (2.4,1) circle (2pt);
    \filldraw[black] (2.4,2.4) circle (2pt);
    \filldraw[black] (1.4,3.4) circle (2pt);
    \filldraw[black] (0,3.4) circle (2pt);
    \filldraw[black] (-1,2.4) circle (2pt);
    \filldraw[black] (-1,1) circle (2pt);

\end{tikzpicture}
    \caption{Example of lamination $L$ in triangulation $T$}
    \label{fig:lamination}
\end{figure}
    Figure~\ref{fig:lamination} shows an example of a lamination $L$ in a triangulation $T$ in Figure~\ref{fig:matrix_example} above. Its matrix $\tilde{B}(T)$ is the following:
    \[\tilde{B}(T)=\begin{bmatrix}
    0 & -1 & 0 & 0 & 0\\
    1 & 0 & -1 & 0 & 0\\
    0 & 1 & 0 & 1 & 0\\
    0 & 0 & -1 & 0 & -1\\
    0 & 0 & 0 & 1 & 0\\
    -1 & 0 & 1 & -1 & 1
    \end{bmatrix}.	
    \]
\end{example}

\begin{definition}
    For a given tagged arc $\gamma$ in a marked surface $(S,M)$, we construct its \emph{elementary lamination} $L_{\gamma}$ as a curve that closely follows $\gamma$ within a small tubular neighborhood. The construction of $L_{\gamma}$ is contingent upon the termination of $\gamma$:
    \begin{itemize}
    \item If $\gamma$ terminates at a boundary component $C$ of $S$ at a point $a$, then $L_{\gamma}$ begins at a point $a' \in C$ located counterclockwise to $a$, and subsequently follows the path of $\gamma$.
    \item If $\gamma$ terminates at a puncture, $L_{\gamma}$ spirals around it: in a counterclockwise direction if $\gamma$ has a plain tag at the puncture, and in a clockwise direction if the tag is notched.
    \end{itemize}
\end{definition}

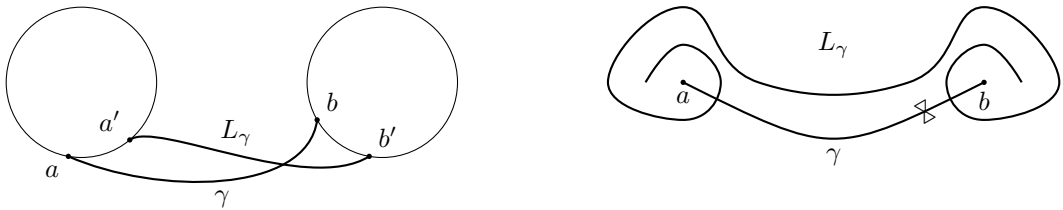
\begin{figure}[H]
    \centering
    \begin{tikzpicture}
    \draw (0,0) circle (1cm) node[below] {};
    \coordinate (P1) at (1*cos{260}, 1*sin{260});
    \coordinate (P2) at (1*cos{310}, 1*sin{310});
    \fill (P1) circle (1pt) node[below left] {$a$};
    \fill (P2) circle (1pt) node[above left] {$a^\prime$};

    \draw (4,0) circle (1cm) node[below] {};
    \coordinate (Q1) at (4+1*cos{-150}, 1*sin{-150});
    \coordinate (Q2) at (4+1*cos{-100}, 1*sin{-100});
    \fill (Q1) circle (1pt) node[above right] {$b$};
    \fill (Q2) circle (1pt) node[above right] {$b^\prime$};

    \draw[thick] (P1) .. controls (1, -1.5) and (3, -1.5) .. (Q1)
    node[midway, below] {$\gamma$};

    \draw[thick] (P2) .. controls (1, -0.5) and (3, -1.5) .. (Q2)
    node[midway, above] {$L_{\gamma}$};
    
    \coordinate (R1) at (8, 0);
    \coordinate (R2) at (12, 0);

    \fill (R1) circle (1pt) node[below] {$a$};
    \fill (R2) circle (1pt) node[below] {$b$};

    \draw[thick] (R1) .. controls (10, -1) and (10, -1) .. (R2)
    node[midway, below, sloped] {$\gamma$}
    node[midway, pos=.85, rotate=120]{$\bowtie$}(12, 0);

    \coordinate (S1) at (7.5, 0);
    \coordinate (S2) at (8, 0.5);
    \coordinate (S3) at (8.5, 0);
    \coordinate (S4) at (8, -0.5);
    \coordinate (S5) at (7, 0);
    \coordinate (S6) at (8, 1);
    \coordinate (S7) at (9, 0);
    \coordinate (S8) at (11, 0);
    \coordinate (S9) at (12, 1);
    \coordinate (S10) at (13, 0);
    \coordinate (S11) at (12, -0.5);
    \coordinate (S12) at (11.5, 0);
    \coordinate (S13) at (12, 0.5);
    \coordinate (S14) at (12.5, 0);

    \draw[thick] plot[smooth, tension=0.8] coordinates {(S1) (S2) (S3) (S4) (S5) (S6) (S7) (S8) (S9) (S10) (S11) (S12) (S13) (S14)};
    \node at (10, 0.5) {$L_{\gamma}$};
        
    \end{tikzpicture}
    \caption{An elementary lamination $L_{\gamma}$ associated with a tagged arc $\gamma$, plain(left) and notched(right).}
    \label{fig:elementary_lamination}

\end{figure}

\begin{definition}\label{def:principal_coefficients}
A cluster pattern $t \mapsto\left(\mathbf{x}_t, \mathbf{y}_t, B_t\right)$ on $\mathbb{T}_n$ (or the corresponding cluster algebra $\mathcal{A}$) is said to have \emph{principal coefficients} at a vertex $t_0$ if the semifield $\mathbb{P}$ is equal to the tropical semifield generated by the coefficients $y_1, \ldots, y_n$, called $y$-variables, and the coefficient tuple $\mathbf{y}_{t_0}$ is equal to $(y_1, \ldots, y_n)$. In this case, we denote the cluster algebra by $\mathcal{A}=\mathcal{A}_{\bullet}\left(B_{t_0}\right)$.
\end{definition}

\begin{remark}
Definition~\ref{def:principal_coefficients} can be rephrased as follows: a cluster algebra $\mathcal{A}$ have principal coefficients at vertex $t_0$ if it is of geometric type and its exchange matrix $\tilde{B}_{t_0}$ is of size $2n \times n$, such that the upper $n \times n$ submatrix is $B_{t_0}$ and the lower $n \times n$ submatrix is the identity matrix $I_n$ (cf. \cite{FZ2002}).
\end{remark}

\begin{definition}
Consider a cluster algebra $\mathcal{A}$ with principal coefficients at $t_0$, determined by the initial seed $\Sigma_{t_0}=\left(\mathbf{x}_{t_0}, \mathbf{y}_{t_0}, B_{t_0}\right)$, where
\[
\mathbf{x}_{t_0}=\left(x_1, \ldots, x_n\right), \quad \mathbf{y}_{t_0}=\left(y_1, \ldots, y_n\right), \quad B_{t_0}=B^0=\left(b_{i j}^0\right).
\]
By the Laurent phenomenon, each cluster variable $x_{\ell ; t}$ can be expressed as a unique Laurent polynomial in the variables $x_1, \ldots, x_n$ and $y_1, \ldots, y_n$; we denote this expression by $X_{\ell ; t} = X_{\ell ; t}^{B^0 ; t_0}$.

The \emph{$F$-polynomial}, denoted $F_{\ell ; t}(y_1, \ldots, y_n)$, is obtained by specializing all $x_i$ variables to 1:
\[
F_{\ell ; t}\left(y_1, \ldots, y_n\right)=X_{\ell ; t}\left(1, \ldots, 1 ; y_1, \ldots, y_n\right).
\]
As shown in \cite{FZ2007}, $F_{\ell ; t}$ is indeed a polynomial in the variables $y_j$.
\end{definition}

Define a $\mathbb{Z}^n$-grading on $\mathcal{A}$ by a degree homomorphism $\deg:\mathcal{A} \to \mathbb{Z}^n$, where $\deg(x_i) = \mathbf{e}_i$ (the $i$-th standard basis vector) and $\deg(y_i) = -\mathbf{b}_i$ (the $i$-th column of $B_{t_0}$).

\begin{definition}
The \emph{\textbf{g}-vector} $\mathbf{g}_{\ell;t} \in \mathbb{Z}^n$ is the degree of the cluster variable $X_{\ell;t}$ under this grading:
\[
\mathbf{g}_{\ell;t} = \deg(X_{\ell;t}).
\]
\end{definition}

The cluster expansion in an algebra with principal coefficients serves as a standard for an algebra with arbitrary coefficients. Let $F$ be a subtraction-free rational expression in several variables. For any semifield $\mathbb{P}$ and elements $u_1, \ldots, u_r \in \mathbb{P}$, we denote by $F|_{\mathbb{P}}(u_1, \ldots, u_r)$ the evaluation of $F$ within the semifield $\mathbb{P}$.

\begin{theorem}[Theorem 3.7 of~\cite{FZ2007}]
For an arbitrary coefficient semifield $\mathbb{P}$, the cluster variables are given by the formula:
\[
x_{j;t} = \frac{X_{j;t}(x_1, \ldots, x_n; y_1, \ldots, y_n)}{F_{j;t}|_{\mathbb{P}}(y_1, \ldots, y_n)}.
\]
\end{theorem}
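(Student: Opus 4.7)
The plan is to proceed by induction on the distance from the initial vertex $t_0$ in the tree $\mathbb{T}_n$. At $t_0$ itself we have $x_{j;t_0}=x_j$, $X_{j;t_0}=x_j$, and $F_{j;t_0}=1$, so the identity reduces to the tautology $x_j = x_j/1$. For the inductive step we fix an edge of $\mathbb{T}_n$ joining vertices $t$ and $t'$ with label $k$, assume the stated equality for every cluster variable at $t$, and establish it for $x_{k;t'}$.

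The key observation is that the cluster variables $x_{i;t}$ in $\mathcal{A}$ and the principal-coefficient variables $X_{i;t}$ satisfy exchange relations of exactly the same combinatorial shape, while the $F$-polynomials satisfy an exchange recursion obtained by specializing each $x_i$ to $1$:
$$F_{k;t'}\cdot F_{k;t}\;=\;M_{+}\prod_i F_{i;t}^{[b_{ik}(t)]_+}\;+\;M_{-}\prod_i F_{i;t}^{[-b_{ik}(t)]_+},$$
where $M_{+}$ and $M_{-}$ are explicit monomials in $y_1,\ldots,y_n$ recorded by the mutation sequence. Substituting the inductive hypothesis $x_{i;t} = X_{i;t}/F_{i;t}|_{\mathbb{P}}$ into the exchange relation for $x_{k;t}\,x_{k;t'}$ in the ambient algebra $\mathcal{A}$, and then clearing denominators using the displayed $F$-recursion evaluated in $\mathbb{P}$, should reproduce the principal-coefficient exchange relation for $X_{k;t}\,X_{k;t'}$ divided by $F_{k;t}|_{\mathbb{P}}\cdot F_{k;t'}|_{\mathbb{P}}$, thereby yielding the claimed formula for $x_{k;t'}$.

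The main obstacle is the bookkeeping of the coefficient monomials $y_{k;t}$ and of the denominator $(1\oplus y_{k;t})|_{\mathbb{P}}$ appearing on the $\mathbb{P}$-side of the exchange relation: these must be matched termwise against the (non-tropical) $y$-monomials produced on the principal-coefficient side. The bridge is the fact that evaluation at any semifield $\mathbb{P}$ is a homomorphism on subtraction-free rational expressions, so the ordinary ``$+$'' in the $F$-polynomial recursion corresponds, after evaluation, to the ``$\oplus$'' on the $\mathbb{P}$-side. Verifying that $(1\oplus y_{k;t})|_{\mathbb{P}}$ is precisely the factor needed to convert the ordinary recursion into its tropical counterpart is where the bulk of the calculation lies; once this identity is in hand, the two exchange relations line up on the nose and the induction closes.
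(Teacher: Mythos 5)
The paper itself does not prove this statement; it is imported verbatim from Fomin--Zelevinsky's \emph{Cluster algebras IV} (their Theorem 3.7), so the only benchmark is the original argument, and your plan is that argument in outline: induction along $\mathbb{T}_n$, matching the exchange relation over $\mathbb{P}$ against the principal-coefficient exchange relation and the $F$-polynomial recursion, using that evaluation in a semifield is a homomorphism on subtraction-free expressions.

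As written, however, the induction does not close, and the missing piece is exactly what you set aside as ``bookkeeping.'' Your induction hypothesis controls only the cluster variables $x_{i;t}$, but the exchange relation over $\mathbb{P}$ at the vertex $t$ involves the coefficient $y_{k;t}\in\mathbb{P}$ and the factor $(1\oplus y_{k;t})$, and nothing in your hypothesis says what these are in terms of the initial data. To make the step work one must induct simultaneously on a companion identity for the coefficients: in Fomin--Zelevinsky's proof this is the statement that the elements $\hat y_{j;t}=y_{j;t}\prod_i x_{i;t}^{b^t_{ij}}$ evolve by the universal $Y$-pattern recurrences (equivalently, that $y_{j;t}$ and $1\oplus y_{j;t}$ are expressible through the initial $y_i$ and the $F_{i;t}|_{\mathbb{P}}$, cf.\ their Propositions 3.9 and 3.13), and this is a substantive calculation in its own right, not a routine check. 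Relatedly, the monomials $M_{\pm}$ in your displayed $F$-recursion are not unspecified data to be ``recorded by the mutation sequence'': they are the tropical coefficient monomials of the principal-coefficient pattern, i.e.\ the $c$-vector entries in the bottom rows of the extended matrix $\tilde{B}_t$, so the induction must also carry $\tilde{B}_t$ along and use that in the tropical semifield the corresponding denominators $1\oplus(\cdot)$ reduce to monomials. Without stating and proving the coefficient identity, the assertion that ``the two exchange relations line up on the nose'' is essentially a restatement of the theorem rather than a derivation of it. Finally, note that in the source the numerator is evaluated at $(x_1,\ldots,x_n;\hat y_1,\ldots,\hat y_n)$ rather than at the $y_i$ themselves; any honest execution of your plan has to fix this interpretation before the computation starts.
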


\section{Combinatorial Expansion Formulas}\label{sec:combi}

\subsection{Posets}

Fence posets are a class of posets that arise naturally in various areas of mathematics, including cluster algebras, quiver representations, and enumerative combinatorics. They are closely related to the study of snake diagrams, which are used to represent the combinatorial structure of cluster algebras, and are also connected to other algebraic and geometric structures such as triangulations of polygons. 

\begin{definition}
    Given a composition \(\alpha = (\alpha_1, \alpha_2, \ldots, \alpha_s)\) of a natural number \(n\), the \emph{fence poset} corresponding to \(\alpha\), denoted as \(P(\alpha)\), is a poset constructed on the ground set \(\{v_1, v_2, \ldots, v_{n+1}\}\) with a specific set of order relations. The order relations in \(P(\alpha)\) are defined as follows: 
    \[
    v_1 \prec v_2 \prec \dots \prec v_{\alpha_1} \prec v_{\alpha_1+1} \succ v_{\alpha_1+2} \succ \dots \succ v_{\alpha_1+\alpha_2+1} \prec v_{\alpha_1+\alpha_2+2} \prec \dots
    \]
    These relations describe a zigzag structure alternating between ascending and descending chains, with \( n = \alpha_1 + \dots + \alpha_s \) determining the total number of covering relations. We call $s$ as the number of parts of a composition $\alpha$.
    Note that the first part $\alpha_1$ of the composition $\alpha$ can be zero, indicating that the fence poset does not necessarily start with an increasing relation $x_1 \prec x_2$.
\end{definition}

Let \( P \) be a finite poset. A subset \( I \subseteq P\) is called a \emph{lower order ideal} (resp.\ \emph{upper order ideal}) if, whenever \( a \in I \), then \( b \prec a \) (resp.\ \( b \succ a \)) implies \( b \in I \). Unless otherwise specified, we use the term \emph{ideal} to refer to a lower order ideal, denoted \( I \trianglelefteq P \). The collection of ideals (or upper order ideals), ordered by inclusion, forms a distributive lattice \( J(P) \), ranked by the cardinality of the ideals.

For a fence poset \( P(\alpha) \) associated to $\alpha$, we denote the associated lattice by \( J(\alpha) := J(P(\alpha)) \). This lattice is graded by the sizes of its ideals, and its generating function is the \emph{rank polynomial}:
\[
R(P;q)=\sum_{I \in J(P)} q^{|I|}.
\]
We will denote $R(\alpha;q):=R(P(\alpha);q)$. The corresponding \emph{rank sequence}, denoted \( r(\alpha) \), consists of the coefficients of \( q \) in \( R(\alpha; q) \). Note that this is same as applying $q$ to all $y$-variables and applying $1$ to all cluster variables $x_i$.

A sequence \( (a_0, a_1, \ldots, a_{n+1}) \) is said to be \emph{unimodal} if there exists an index \( 0 \leq k \leq n+1 \) such that
\[
a_0 \leq a_1 \leq \dots \leq a_{k-1} \leq a_k \geq a_{k+1} \geq \dots \geq a_{n+1}.
\]

A sequence is said to exhibit \emph{top interlacing} if
\[
a_0 \leq a_{n+1} \leq a_1 \leq a_{n} \leq \dots \leq a_{\lceil {n+1}/2\rceil},
\]
and \emph{bottom interlacing} if
\[
a_{n+1} \leq a_0 \leq a_{n} \leq a_1 \leq \dots \leq a_{\lfloor {n+1}/2\rfloor}.
\]
It is called \emph{symmetric} if
\[
a_i = a_{n+1-i} \quad \text{for all } 0 \leq i \leq n+1.
\]

A polynomial is said to be \emph{unimodal} if the sequence of its coefficients is unimodal. The unimodality of the rank sequences of fence posets was first conjectured in~\cite{MGO20} and subsequently proven in~\cite{OR23}. In particular, certain structured sequences—such as those arising from top interlacing, bottom interlacing, and symmetric configurations—satisfy the following inequality, referred to as \eqref{eq:ineqA} in~\cite{OR23}:

\begin{equation*}
a_{0} \leq a_{n},\quad a_{1} \leq a_{n-1},\quad \ldots \qquad\qquad a_{n+1} \leq a_{1},\quad a_{n} \leq a_{2},\quad \ldots \label{eq:ineqA}\tag{Ineq A}
\end{equation*}

\begin{definition}
A sequence that is unimodal and satisfies the inequality \eqref{eq:ineqA} is called \emph{almost interlacing}.
\end{definition}

\begin{theorem}[Theorem 1.2 and Theorem 1.3 of   \cite{OR23}]\label{thm:rankuni}
    Let \( R(\alpha; q) = \sum_{i=0}^{n+1} a_i q^i \) be the rank polynomial of a fence poset determined by \( \alpha = (\alpha_1, \ldots, \alpha_s) \), with \( n = \alpha_1 + \dots + \alpha_s \). Then \( R(\alpha; q) \) is almost interlacing. Furthermore, the rank sequence is either top interlacing, bottom interlacing, or symmetric, depending on the parity of \( s \) and the relative values of \( \alpha_1 \) and \( \alpha_s \).
\end{theorem}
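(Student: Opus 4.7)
The plan is to mirror the strategy of Oğuz--Ravichandran, proceeding by induction on the number of parts $s$ of the composition $\alpha = (\alpha_1, \ldots, \alpha_s)$. The inductive statement must be strengthened beyond bare almost interlacing: one tracks \emph{which} of the three types (top interlacing, bottom interlacing, or symmetric) the rank sequence has, and this type is prescribed in terms of the parity of $s$ together with a comparison between $\alpha_1$ and $\alpha_s$. This bookkeeping is necessary because summing and $q$-shifting sequences of different interlacing types need not preserve (ineqA); the proof succeeds only because the inductive hypothesis pins down the interlacing type at each step. The base cases $s \leq 1$ are immediate: $P(\alpha)$ is a chain and $R(\alpha;q) = 1 + q + \cdots + q^{\alpha_1}$ is a symmetric $q$-integer.

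For the inductive step, I would decompose an ideal $I \trianglelefteq P(\alpha)$ by its intersection with the last monotone segment of the fence, a chain of length $\alpha_s$ attached to $P(\alpha')$ where $\alpha' = (\alpha_1, \ldots, \alpha_{s-1})$. Since this intersection is determined by a single integer $k \in \{0, 1, \ldots, \alpha_s\}$, one obtains a recursion of the shape
\[
R(\alpha; q) \;=\; A(q)\, R(\alpha'; q) \;+\; B(q)\, R(\widetilde{\alpha}; q),
\]
where $A, B$ are short polynomials in $q$ and $\widetilde{\alpha}$ is obtained from $\alpha'$ by modifying its last part $\alpha_{s-1}$. A dual recursion, obtained by reading the composition backwards, expresses $R(\alpha;q)$ in terms of fences peeling off the first segment; the interplay between these two views provides the symmetry needed to verify (ineqA) on both ends of the coefficient vector simultaneously.

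With these recursions in hand, one checks the two families of inequalities in (ineqA) coefficient by coefficient, applying the inductive hypothesis on $R(\alpha'; q)$ and $R(\widetilde{\alpha}; q)$. Whether the resulting sequence is top interlacing, bottom interlacing, or symmetric is determined by (i) the parity of $s$, which governs whether the last chain ascends or descends relative to a canonical left-to-right reading, and (ii) the comparison of $\alpha_1$ and $\alpha_s$, which controls whether the shifts introduced by $A(q)$ and $B(q)$ align or misalign with the peak of the unimodal sequence. A finite case analysis on these two data then produces the claimed trichotomy.

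The main obstacle is precisely this propagation of the \emph{type}: a monomial multiple of a top interlacing polynomial added to a symmetric polynomial is not automatically almost interlacing, so the raw inductive hypothesis on (ineqA) is too weak. Overcoming this requires either quantifying the slack in the inductive inequalities to absorb the error introduced by the $q$-shift, or constructing explicit coefficient-level injections between order ideals that witness each individual (ineqA) inequality. This is the technical core of \cite{OR23} and the step that cannot be bypassed by general unimodality machinery such as geometric lattices or log-concavity of $q$-integers alone.
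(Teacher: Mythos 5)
This statement is not proved in the paper at all: it is quoted verbatim as Theorem 1.2 and Theorem 1.3 of \cite{OR23}, so there is no in-paper argument for you to be compared against. Judged against the actual proof in \cite{OR23}, your sketch correctly reconstructs the broad strategy --- induction on the number of parts $s$, peeling off the last monotone segment, and, crucially, strengthening the inductive hypothesis to track \emph{which} of the three types (top interlacing, bottom interlacing, symmetric) the rank sequence has, since \eqref{eq:ineqA} alone is not stable under shifted sums. Identifying that the naive hypothesis is too weak is indeed the right diagnosis.

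However, as written your proposal is a plan rather than a proof, and the gap sits exactly where you point at it. The recursion is left schematic: you posit $R(\alpha;q)=A(q)R(\alpha';q)+B(q)R(\widetilde{\alpha};q)$ with unspecified ``short polynomials'' $A,B$, whereas the argument in \cite{OR23} uses concrete splittings (according to whether the ideal contains the extremal element of the last segment) giving identities of the form $R(\alpha;q)=R(\beta;q)+q^{d}R(\gamma;q)$ with explicit shifts $d$ determined by $\alpha_{s}$ and $\alpha_{s-1}$; the entire difficulty is that the admissible combinations of types under these \emph{specific} shifts must be classified, and that classification (the case analysis on the parity of $s$ and on $\alpha_1$ versus $\alpha_s$, together with the lemmas stating when a sum of a symmetric and a shifted top/bottom interlacing sequence is again of a predicted type) is precisely the content you defer with ``a finite case analysis then produces the claimed trichotomy.'' Without carrying out that analysis --- or at least stating the combination lemmas with their hypotheses on the shift sizes --- the induction does not close, so the proposal cannot be accepted as a proof of the cited theorem; it is an accurate outline of how \cite{OR23} proceeds.
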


Theorem~\ref{thm:rankuni} implies that for any rank polynomial, the coefficients \( a_{\lfloor (n+1)/2 \rfloor} \) and \( a_{\lceil (n+1)/2 \rceil} \) always attain the maximum value among all coefficients \( a_i \). In addition, the authors of~\cite{OR23} considered \emph{circular fence posets}, which are obtained by imposing the additional relation \( v_{n+1} = v_1 \) on the standard fence poset. We denote such posets as \( \overline{P}(\alpha) \), the corresponding rank polynomials are denoted by \( \overline{R}(\alpha; q) \). They showed that the rank sequences of these posets are symmetric.

\begin{theorem}[Theorem 1.6 of~\cite{OR23}]\label{thm:circular}
    The rank polynomials of circular fence posets are symmetric.
\end{theorem}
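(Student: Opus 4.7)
The goal is to prove the palindromic identity $\overline{R}(\alpha; q) = q^n \overline{R}(\alpha; 1/q)$ with $n = \alpha_1 + \cdots + \alpha_s$; this is equivalent to $a_k = a_{n-k}$ for all $k$. The plan is to relate $\overline{P}(\alpha)$ to its opposite poset $\overline{P}(\alpha)^{\mathrm{op}}$ via two separate identities whose concatenation produces the symmetry.

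First, I would set up a transfer-matrix encoding. An ideal $I$ of $\overline{P}(\alpha)$ corresponds to a cyclic binary tuple $(s_1, \ldots, s_n) \in \{0, 1\}^n$ with $s_i = 1$ iff $x_i \in I$; the lower-ideal condition becomes the local prohibition $(s_i, s_{i+1}) \neq (0,1)$ at up arrows and $(s_i, s_{i+1}) \neq (1,0)$ at down arrows. With source-weight transfer matrices
\[
T_U = \begin{pmatrix} 1 & 0 \\ q & q \end{pmatrix}, \qquad T_D = \begin{pmatrix} 1 & 1 \\ 0 & q \end{pmatrix},
\]
indexed by $\{0,1\}$, one obtains $\overline{R}(\alpha; q) = \mathrm{tr}(T_{a_1} T_{a_2} \cdots T_{a_n})$, where $(a_1, \ldots, a_n) \in \{U, D\}^n$ is the cyclic arrow sequence of $\overline{P}(\alpha)$.

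The first intermediate identity, $R(\overline{P}(\alpha)^{\mathrm{op}}; q) = q^n \overline{R}(\alpha; 1/q)$, is a standard complement-of-ideals bijection: the bitwise complement $(s_i) \mapsto (1 - s_i)$ swaps the two forbidden patterns and so sends ideals of $\overline{P}(\alpha)$ of size $k$ bijectively to ideals of $\overline{P}(\alpha)^{\mathrm{op}}$ of size $n - k$. The second identity $\overline{R}(\alpha; q) = R(\overline{P}(\alpha)^{\mathrm{op}}; q)$ is the heart of the plan; for it, I would solve the linear system $A T_U = T_D A$ and $A T_D = T_U A$ to exhibit
\[
A = \begin{pmatrix} 1 - q & 1 \\ q & q - 1 \end{pmatrix}, \qquad \det A = -(q^2 - q + 1) \neq 0,
\]
which simultaneously conjugates $T_U$ to $T_D$ and vice versa. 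By cyclic invariance of the trace,
\[
\mathrm{tr}(T_{a_1} \cdots T_{a_n}) = \mathrm{tr}\bigl((A T_{a_1} A^{-1}) \cdots (A T_{a_n} A^{-1})\bigr) = \mathrm{tr}(T_{\bar{a}_1} \cdots T_{\bar{a}_n}),
\]
where $\bar{a}$ denotes the flipped type; the right-hand side equals $R(\overline{P}(\alpha)^{\mathrm{op}}; q)$. Concatenating the two identities gives $\overline{R}(\alpha; q) = q^n \overline{R}(\alpha; 1/q)$, which is the desired symmetry.

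The main obstacle is the flipping identity, and specifically the existence of a common intertwining matrix $A$. Solving $A T_U = T_D A$ alone gives a two-parameter family of candidates; imposing $A T_D = T_U A$ additionally picks out a one-parameter subfamily, and the nontrivial observation is that this intersection is nonempty with nonvanishing determinant. All other steps are routine: the transfer-matrix reformulation is a standard exercise, the complement bijection is a direct local check, and the concluding trace manipulation is formal. Hence the substantive work of the proof is concentrated in producing the explicit $A$.
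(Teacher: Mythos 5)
Your proposal is correct, and I verified the one computation it hinges on: with $A=\left(\begin{smallmatrix}1-q & 1\\ q & q-1\end{smallmatrix}\right)$ one has $AT_U=T_DA=\left(\begin{smallmatrix}1 & q\\ q^{2} & q^{2}-q\end{smallmatrix}\right)$ and $AT_D=T_UA=\left(\begin{smallmatrix}1-q & 1\\ q & q^{2}\end{smallmatrix}\right)$, with $\det A=-(q^{2}-q+1)\neq 0$, so $A$ is invertible over $\mathbb{Q}(q)$ and the conjugation--trace argument goes through; the transfer-matrix encoding, the complementation identity $R(\overline{P}(\alpha)^{\mathrm{op}};q)=q^{n}\overline{R}(\alpha;1/q)$, and the concluding telescoping are all sound, and a polynomial identity proved in $\mathbb{Q}(q)$ suffices. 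Note, however, that the paper you are comparing against does not prove this statement at all: it imports it as Theorem 1.6 of \cite{OR23}, where the symmetry of circular rank polynomials is established by combinatorial means (recursions relating circular and linear fence posets together with induction on the composition), not by transfer matrices. So your argument is a genuinely different, self-contained proof. What it buys: it isolates the symmetry as two cleanly separated facts, the routine complementation bijection and the invariance of the trace under flipping all arrow types, the latter explained structurally by a single intertwiner conjugating $T_U\leftrightarrow T_D$; as a byproduct it shows the rank polynomial of a circular fence poset is invariant under passing to the opposite poset (equivalently, under reversing the cyclic word), which is not immediate from the definition. What the cited combinatorial route buys instead is machinery (recursions and decompositions of ideals) that \cite{OR23} reuses for the unimodality and interlacing statements, which your trace identity by itself does not address. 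Two cosmetic points: your letters $a_i$ are overloaded (coefficients of the rank polynomial versus the cyclic arrow word), and you should state explicitly that the circular fence poset on $n$ elements has rank polynomial of degree exactly $n$ with nonzero constant term, so that $\overline{R}(\alpha;q)=q^{n}\overline{R}(\alpha;1/q)$ is literally the palindromicity $a_k=a_{n-k}$ asserted in the theorem.
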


Furthermore,~\cite{OOR24} established that the rank polynomials of circular fence posets are unimodal, with the exception of cases where the poset has the form \( (1,a,1,a) \) or \( (a,1,a,1) \) for some \( a \in \mathbb{N} \). In these exceptional cases, the rank sequence takes the form:
\[
(1, 2, \ldots, a+1, a, a+1, a, \ldots, 2, 1).
\]

\begin{theorem}[Theorem 5.1 of~\cite{OOR24}]\label{thm:circuni}
    The rank polynomials of circular fence posets \( \overline{P}(\alpha) \) are unimodal, except when \( \alpha = (a,1,a,1) \) or \( \alpha = (1,a,1,a) \) for some positive integer \( a \).
\end{theorem}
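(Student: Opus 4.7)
The plan is to exploit the symmetry established in Theorem~\ref{thm:circular} and combine it with a cut-and-decompose identity that expresses $\overline{R}(\alpha;q)$ in terms of rank polynomials of linear fence posets, to which the almost-interlacing result Theorem~\ref{thm:rankuni} can be applied. Writing $\overline{R}(\alpha;q) = \sum_{i=0}^{n} a_i q^i$, symmetry gives $a_i = a_{n-i}$, so it suffices to prove that the coefficients are non-decreasing from the left end to the middle index.

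The first step is to derive a cut identity. Fix a vertex $v$ of $\overline{P}(\alpha)$---for concreteness, a local maximum, which exists for any composition---and partition the order ideals $I$ of $\overline{P}(\alpha)$ according to whether $v \in I$. When $v \notin I$, the restriction to the rest of the cycle is an arbitrary order ideal of the linear fence poset obtained by deleting $v$; when $v \in I$, both neighbors of $v$, together with everything below them along each descending chain, are forced into $I$. This yields an identity of the form
\[
    \overline{R}(\alpha;q) \;=\; R(\beta_0;q) \;+\; q^{d}\, R(\beta_1;q),
\]
where $\beta_0, \beta_1$ are linear compositions of strictly smaller weight, and $d$ counts the elements forced when $v \in I$. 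By Theorem~\ref{thm:rankuni}, both $R(\beta_0;q)$ and $R(\beta_1;q)$ are almost interlacing.

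The second step is strong induction on $|\alpha|$. The inductive hypothesis combined with the almost-interlacing property supplies coefficient-wise inequalities that allow one to compare the two summands term by term. Since $\overline{R}(\alpha;q)$ is already symmetric of degree $n$, any potential dip in its coefficient sequence must be located symmetrically about the middle, and the inequalities~\eqref{eq:ineqA} applied to both summands are exactly the tool needed to rule out such a dip in the generic case. The principal obstacle is to show that the shift by $q^{d}$ aligns the peaks of the two linear rank polynomials so that their weighted sum remains unimodal---a delicate parity-dependent argument, and it is precisely at this step that the exceptional compositions arise.

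Finally, the exceptional cases $\alpha=(a,1,a,1)$ and $\alpha=(1,a,1,a)$ are the compositions for which $\overline{P}(\alpha)$ admits a non-trivial rotational symmetry of order four; in the decomposition above, the two summands turn out to be shifted copies of closely related polynomials whose peaks coincide at the center, producing the characteristic dip. A direct computation (or induction using the rotational symmetry) then confirms that the rank sequence takes the form $(1, 2, \ldots, k+1, k, k+1, k, \ldots, 2, 1)$. For every other $\alpha$, the peaks of the two shifted summands occupy different indices, the parity alignment is broken, and unimodality is preserved, completing the proof.
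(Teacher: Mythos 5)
First, note that the paper does not prove Theorem~\ref{thm:circuni} at all: it is imported verbatim as Theorem~5.1 of~\cite{OOR24}, so there is no internal proof to compare your argument against, and your proposal has to stand on its own as a proof of the cited result. Your first step is sound: cutting the circular poset at a local maximum $v$ and sorting ideals by whether they contain $v$ does give an identity $\overline{R}(\alpha;q)=R(\beta_0;q)+q^{d}R(\beta_1;q)$ with $\beta_0,\beta_1$ linear fence posets, Theorem~\ref{thm:rankuni} makes both summands almost interlacing, and Theorem~\ref{thm:circular} supplies the symmetry of $\overline{R}(\alpha;q)$.

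Everything after that, however, is asserted rather than proved, and it is precisely the hard part of the theorem. A sum of two (shifted) unimodal polynomials is in general not unimodal; to control $R(\beta_0;q)+q^{d}R(\beta_1;q)$ you must locate the peaks of the two summands as functions of $\alpha$ (the interlacing type of each $R(\beta_i;q)$ depends on the parity of the number of parts and on the end parts of $\beta_i$) and then verify coefficientwise inequalities on the interval between the two peak positions, which is where a dip could occur. You never specify which vertex $v$ is chosen, where the peaks land, which instances of~\eqref{eq:ineqA} are invoked, or why they close the gap; ``the inequalities applied to both summands are exactly the tool needed to rule out such a dip'' is a statement of intent, not an argument, and you yourself flag the peak-alignment step as the ``principal obstacle'' without resolving it. Worse, the mechanism you propose for the exceptional cases is backwards: if two unimodal sequences have a common peak index, their sum is automatically unimodal, so coinciding peaks cannot ``produce the characteristic dip,'' and misaligned peaks are where failure can occur --- the opposite of your claim that differing peak indices preserve unimodality for all other $\alpha$. (Also, the announced strong induction on $|\alpha|$ is never used: after the cut both pieces are linear fence posets and Theorem~\ref{thm:rankuni} applies directly, so there is nothing to induct on.) As it stands, the proposal reduces the theorem to its most delicate step and then skips that step, so it does not constitute a proof.
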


The above theorem naturally implies that except when \( \alpha = (a,1,a,1) \) or \( \alpha = (1,a,1,a) \), the rank polynomials of circular fence posets are almost interlacing as well.

\subsection{Posets in Cluster Algebras from Surfaces}

In what follows, we construct a fence poset \( (P_{\gamma}, \prec) \) associated with a (possibly generalized) arc \( \gamma \) on a marked surface \( (S, M) \) equipped with a triangulation \( T \). Our terminology and exposition largely follow the framework developed in~\cite{banaian2024skein}.

First, suppose that \( \gamma \) is an arc with both endpoints tagged plain. Choose an orientation on \( \gamma \), and let \( 1, \ldots, d \) be the sequence of arcs in \( T \) crossed by \( \gamma \), listed in the order determined by this orientation. We define a poset structure on the index set \( [d]:=\{1,\ldots,d\} \) as follows. Each pair of consecutive crossings \( i \) and \( i+1 \) corresponds to a triangle \( \Delta_i \) through which \( \gamma \) passes. Let \( s_i \) be the shared endpoint of \( i \) and \( i+1 \), which is also a vertex of \( \Delta_i \). If \( s_i \) lies to the right of \( \gamma \) (with respect to its orientation), then set \( i \succ i+1 \); otherwise, set \( i \prec i+1 \). The resulting poset is called a \emph{fence poset}, as its Hasse diagram forms a path graph. This procedure applies regardless of whether \( \gamma \) has self-intersections.

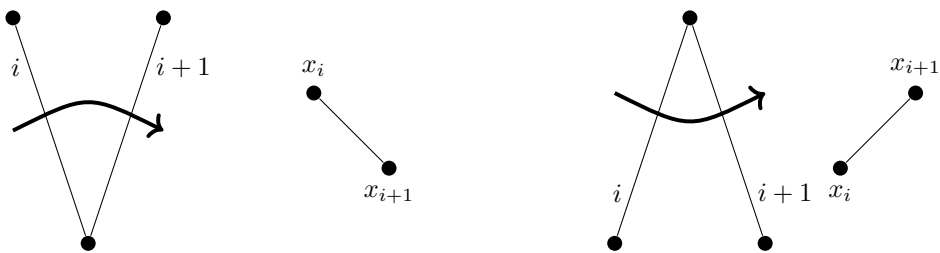
\begin{figure}[H]
\centering
\begin{tikzpicture}
    \coordinate (P1) at (0, 3);
    \coordinate (P2) at (1, 0);
    \coordinate (P3) at (2, 3);
    \coordinate (P4) at (4, 2);
    \coordinate (P5) at (5, 1);

    \node[circle, fill=black, inner sep=2pt] (P1) at (P1) {};
    \node[circle, fill=black, inner sep=2pt] (P2) at (P2) {};
    \node[circle, fill=black, inner sep=2pt] (P3) at (P3) {};
    \node[circle, fill=black, inner sep=2pt, label=above:$v_i$] (P4) at (P4) {};
    \node[circle, fill=black, inner sep=2pt, label=below:$v_{i+1}$] (P5) at (P5) {};
    
    \draw[-] (P1) -- (P2) node[pos=0.2, left] {$i$};
    \draw[-] (P2) -- (P3) node[pos=0.8, right] {$i+1$};
    \draw[-] (P4) -- (P5);

    \coordinate (Q1) at (0, 1.5);
    \coordinate (Q2) at (1, 2);
    \coordinate (Q3) at (2, 1.5);

    \draw[smooth, ->, line width=1.5pt] (Q1) .. controls (Q2) .. (Q3);

    \coordinate (P6) at (8, 0);
    \coordinate (P7) at (9, 3);
    \coordinate (P8) at (10, 0);
    \coordinate (P9) at (11, 1);
    \coordinate (P10) at (12, 2);

    \node[circle, fill=black, inner sep=2pt] (P6) at (P6) {};
    \node[circle, fill=black, inner sep=2pt] (P7) at (P7) {};
    \node[circle, fill=black, inner sep=2pt] (P8) at (P8) {};
    \node[circle, fill=black, inner sep=2pt, label=below:$v_i$] (P9) at (P9) {};
    \node[circle, fill=black, inner sep=2pt, label=above:$v_{i+1}$] (P10) at (P10) {};
    \draw[-] (P6) -- (P7) node[pos=0.2, left] {$i$};
    \draw[-] (P7) -- (P8) node[pos=0.8, right] {$i+1$};
    \draw[-] (P9) -- (P10);

    \coordinate (Q4) at (8, 2);
    \coordinate (Q5) at (9, 1.5);
    \coordinate (Q6) at (10, 2);

    \draw[smooth, ->, line width=1.5pt] (Q4) .. controls (Q5) .. (Q6);
\end{tikzpicture}
\caption{Building a fence poset for an arc.}
\label{fig:closed_curve_fence_poset}
\end{figure}

Next, suppose that \( \gamma^{(p)} \) is notched at its starting point \( s(\gamma) = p \). Begin by constructing the fence poset for the underlying plain arc. Let \( \Delta_0 \) be the first triangle traversed by \( \gamma^{(p)} \); necessarily, \( \Delta_0 \) is bordered by \( 1 \) and two spokes at \( p \). Denote the set of all spokes at \( p \) in \( T \) by \( \sigma_1, \ldots, \sigma_m \), listed counterclockwise, where \( \sigma_1 \) is the counterclockwise neighbor of \( 1 \). If an arc has both endpoints at \( p \), it receives two labels depending on their position in the cyclic order around \( p \). We introduce elements \( 1^s, \ldots, m^s \) into the poset and define the relations \( m^s \succ \dots \succ 1^s \), together with \( 1^s \prec 1 \) and \( m^s \succ 1 \). For arcs notched at their terminal point, we perform an analogous construction with elements \( 1^t, \ldots, m^t \). For arcs notched at both endpoints, we apply both constructions. The resulting posets are called \emph{loop fence posets}, corresponding to the loop graphs introduced by Wilson in~\cite{wilson2020surface}. An example of a loop fence poset is illustrated in Figure~\ref{fig:exnotched}.

\begin{figure}[htbp]
    \centering
    \begin{tikzpicture}[scale=1] 
        \draw (4.5,-1) to (5,0);
        \draw (4,1.5) to (5,0);
        \draw (6,0) to (5,0);
        \draw (3,-1.2) to (4.5,-1);
        \draw (4.5,-1) to (4,1.5);
        \draw (3,-1.2) to (4,1.5);
        \draw (3,-1.2) to (3,1.5);
        \draw (0,1) to (1.5,0);
        \draw (0,1) to (0,-1);
        \draw (0,-1) to (1.5,0);
        \draw (3,1.5) to (1.5,0);
        \draw (3,-1.2) to (1.5,0);
        \draw (3,-1.2) to (0,-1);
        \draw (3,1.5) to (0,1);
        \draw (3,1.5) to (4,1.5);
        \draw (4,1.5) to (6,0);
        \draw (4.5,-1) to (6,0);

        \draw[red,thick] (2.1,0.2) to (1.9,-0.2);
        \draw[red,thick] (1.9,0.2) to (2.1,-0.2);
        \draw[red,thick] (2.1,0.2) to (1.9,0.2);
        \draw[red,thick] (2.1,-0.2) to (1.9,-0.2);

        \node[fill=white,scale=0.7] at (0.6,0.5) {$\sigma_2$};
        \node[fill=white,scale=0.7] at (0.8,-0.4) {$\sigma_3$};
        \node[fill=white,scale=0.7] at (2,-0.5) {$\sigma_4$};
        \node[fill=white,scale=0.7] at (2,0.4) {$\sigma_1$};
        \node[fill=white,scale=0.7] at (3,0.5) {$\tau_1$};
        \node[fill=white,scale=0.7] at (3.5,0.2) {$\tau_2$};
        \node[fill=white,scale=0.7] at (4.15,0.3) {$\tau_3$};

        \draw[red,thick] (1.5,0) to (5,0);
        \node[red, fill=white, scale=0.8] at (2.8,0.0) {$\gamma_1$};

        \node[fill=white,scale=0.7] at (4.7,0.4) {$\eta_3$};
        \node[fill=white,scale=0.7] at (5.5,0) {$\eta_2$};
        \node[fill=white,scale=0.7] at (4.8,-0.3) {$\eta_1$};

        \node[scale=0.8] at (1.5,0.2) {$p$};
        \node[scale=0.8] at (5.1,0.2) {$q$};

        \draw[fill=black] (5,0) circle [radius=2pt];
        \draw[fill=black] (6,0) circle [radius=2pt];
        \draw[fill=black] (4,1.5) circle [radius=2pt];
        \draw[fill=black] (4.5,-1) circle [radius=2pt];
        \draw[fill=black] (3,-1.2) circle [radius=2pt];
        \draw[fill=black] (3,1.5) circle [radius=2pt];
        \draw[fill=black] (0,1) circle [radius=2pt];
        \draw[fill=black] (0,-1) circle [radius=2pt];
        \draw[fill=black] (1.5,0) circle [radius=2pt];
    \end{tikzpicture}
    \quad 
    \begin{tikzpicture}[baseline=(s1.base)] 
        \node (s4) at (0,0) {$\sigma_4$};
        \node (s3) at (0.5,-1) {$\sigma_3$};
        \node (s2) at (1,-2) {$\sigma_2$};
        \node (s1) at (1.5,-3) {$\sigma_1$};
        \node (t1) at (2,-2) {$\tau_1$};
        \node (t2) at (2.5,-3) {$\tau_2$};
        \node (t3) at (3,-2) {$\tau_3$};

        \draw (s4) to (t1);
        \draw (s4) to (s3);
        \draw (s3) to (s2);
        \draw (s2) to (s1);
        \draw (s1) to (t1);
        \draw (t1) to (t2);
        \draw (t2) to (t3);
    \end{tikzpicture}

    \caption{Example of notched tagged arc and its fence poset.}
    \label{fig:exnotched}
\end{figure}

Now consider the case where \( \gamma \) is a closed curve. Choose a point \( a \in \gamma \) that is not a point of intersection with \( T \), and fix an orientation on \( \gamma \). Treat \( \gamma \) as an arc with \( s(\gamma) = t(\gamma) = a \), and build the fence poset on \( [d] \) accordingly. Since \( \tau_{i_1} \) and \( \tau_{i_d} \) share an endpoint which is a vertex of the triangle containing \( a \), we close the cycle by setting \( d \succ 1 \) if this shared endpoint lies to the right of \( \gamma \); otherwise, set \( d \prec 1 \). These posets turn out to be circular fence posets.

\begin{theorem}[Theorems 5.4, 5.7 of~\cite{musiker2011positivity}; Theorem 7.9 of~\cite{wilson2020surface}]\label{Thm:PMPoset}
Let \( S \) be a surface with triangulation \( T \), and let \( \gamma \) be an arc on \( S \) such that \( \gamma^0 \notin T \).
\begin{enumerate}
    \item If \( \gamma \) is a plain arc, then there exists a bijection between \( J(F_\gamma) \) and the terms of the expansion of $x_{\gamma,T}$.
    \item If \( \gamma \) is a tagged arc or a closed curve, then there exists a bijection between \( J(F_\gamma) \) and the terms of the expansion of $x_{\gamma,T}$.
\end{enumerate}
\end{theorem}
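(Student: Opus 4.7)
The plan is to prove both parts by routing through an intermediate combinatorial model---perfect matchings (or good matchings, in the notched/closed case) of a planar bipartite graph attached to $\gamma$---which serves as a common bookkeeping device between order ideals of $F_\gamma$ and monomials in the cluster expansion of $x_{\gamma,T}$. In both parts the bijection factors as $J(F_\gamma) \longleftrightarrow \{\text{matchings of } \mathcal{G}\} \longleftrightarrow \{\text{terms of } x_{\gamma,T}\}$; the first map is poset-theoretic and the second is the content of the MSW/Wilson expansion theorems, which we may invoke.

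For part (1), I would first build the snake graph $\mathcal{G}_{\gamma,T}$ of~\cite{musiker2011positivity}: glue one tile per crossing of $\gamma$ with $T$, the gluing being vertical or horizontal according to whether $i \prec i+1$ or $i \succ i+1$ in $F_\gamma$. The MSW formula then writes $x_{\gamma,T}$ as a sum over perfect matchings $M$ of $\mathcal{G}_{\gamma,T}$ of an explicit weighted monomial, so the right-hand bijection is furnished by that theorem. The left-hand bijection is the classical one between $J(F_\gamma)$ and perfect matchings of $\mathcal{G}_{\gamma,T}$: starting from the unique minimal matching, an ideal $I \subseteq [d]$ is sent to the matching obtained by applying the ``twist'' at every tile indexed by $I$. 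This identification is standard and one checks by induction on $d$ that it is a lattice isomorphism $J(F_\gamma) \cong \mathrm{Match}(\mathcal{G}_{\gamma,T})$, with covers of ideals corresponding to single twists. Composing with the MSW expansion yields part (1).

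For part (2) I would follow Wilson's construction~\cite{wilson2020surface}. When $\gamma$ is a closed curve, the band graph is obtained from the snake graph of the once-cut curve by identifying the two terminal tiles; the corresponding ``good matchings'' biject with order ideals of the circular fence poset $F_\gamma$, and the band-graph expansion formula recovers $x_{\gamma,T}$. When $\gamma$ is tagged notched at a puncture $p$, one replaces $\gamma$ by its hook $\tilde{\gamma}$ (Figure~\ref{fig:hook}) and forms the loop graph by adjoining, to the snake graph of $\tilde{\gamma}$, a loop piece indexed by the spokes $\sigma_1,\dots,\sigma_m$ at $p$. The extra poset elements $1^s,\dots,m^s$ (and $1^t,\dots,m^t$ in the doubly-notched case) placed into $F_\gamma$ during its construction correspond exactly to the additional twists enabled by the loop piece, so the same ``twist from the minimal matching'' recipe produces the bijection $J(F_\gamma) \cong \mathrm{GoodMatch}(\mathcal{G})$; combining this with Theorem~7.9 of~\cite{wilson2020surface} finishes part (2).

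The main obstacle is the compatibility check at the poset-to-matching step: one must verify that a cover relation $I \lessdot I \cup \{k\}$ in $J(F_\gamma)$ corresponds to a single local twist between two matchings, \emph{and} that the resulting change in monomial weight agrees with the one appearing in MSW/Wilson. For snake graphs this is routine induction on $d$ by analyzing the turning direction at tile $k$. The delicate case is the loop piece for a notched endpoint, where the clockwise/counterclockwise convention used to orient the hook around $p$ must be reconciled with the order in which the spokes $\sigma_1,\dots,\sigma_m$ are placed into $F_\gamma$; once one fixes conventions so that the counterclockwise hook matches the labeling $1^s \prec 2^s \prec \dots \prec m^s$ and confirms that twisting along a spoke element tracks the weight contribution from that spoke, the rest is bookkeeping transcribed from the MSW and Wilson expansion formulas.
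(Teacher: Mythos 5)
This statement is not proved in the paper at all: it is imported verbatim as a citation of Theorems 5.4 and 5.7 of \cite{musiker2011positivity} and Theorem 7.9 of \cite{wilson2020surface} (together with the matching--ideal correspondence discussed in \cite{ouguz2024cluster, pilaud2023posets}), so there is no internal proof to compare against. Your outline reconstructs exactly the standard route taken in those references---snake, band, and loop graphs, the twist bijection between order ideals of the (circular/loop) fence poset and perfect or good matchings, and the weighted expansion formulas---so it is consistent with the source of the result; note only that since you invoke the MSW/Wilson expansion theorems themselves, your argument is a faithful assembly of the cited ingredients rather than an independent proof, which is precisely how the paper treats the statement.
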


Given an arc \( \tau \in T \), define
\[
x_{\mathrm{CCW}}(\tau) = x_{\tau_j} x_{\tau_k},
\]
where \( \tau_j, \tau_k \in T \) are the counterclockwise neighbors of \( \tau \) within the triangles adjacent to it. If one or both neighbors lie on the boundary, their contribution is omitted. Similarly, define \( x_{\mathrm{CW}}(\tau) \) using the clockwise neighbors. We then set
\[
\hat{y}_{\tau} = \frac{x_{\mathrm{CCW}}(\tau)}{x_{\mathrm{CW}}(\tau)} \, y_{\tau}.
\]

This is the standard definition of the \( \hat{y} \)-variables in cluster algebras from surfaces. In particular, if we specialize \( x_i = 1 \) for all \( i \), then \( \hat{y}_\tau = y_\tau \).

For an ideal \( I \in J(F_\gamma) \), define its weight by
\[
\operatorname{wt}(I) = \prod_{\tau \in I} y_\tau,
\]
interpreted as a multiset product, with the convention \( \operatorname{wt}(\emptyset) = 1 \). In cluster algebras from surfaces, the $F$-polynomial associated to an arc $\gamma$ with respect to an initial triangulation $T$ will be denoted
\[
F_{\gamma}^T(y_1,\ldots,y_n).
\]

\begin{prop}[Proposition~1 of~\cite{banaian2024skein}, Theorem~4.2 of~\cite{ouguz2024cluster}]\label{prop:IDEAL}
Let \( \gamma \) be an arc or closed curve on a marked surface \( (S, M) \) with triangulation \( T \). Upon specializing \( x_i = 1 \) for all \( i \), the $F$-polynomial $F^T_{\gamma}(y_1,\ldots,y_n)$ of the cluster variable \( x_\gamma \in \mathcal{A}(S, M) \), expressed in the seed associated with \( T \), is given by
\[
F^T_{\gamma}(y_1,\ldots,y_n) = \sum_{I \in J(F_\gamma)} \operatorname{wt}(I).
\]
\end{prop}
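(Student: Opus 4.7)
The plan is to reduce the statement to the known cluster expansion formulas of Musiker--Schiffler--Williams and Wilson (collected here as Theorem~\ref{Thm:PMPoset}), and then to check that the $x_i=1$ specialization collapses the matching sum exactly into the weighted ideal sum. Since $F_\gamma^T(y_1,\ldots,y_n)$ is by definition the evaluation of $X_{\gamma;T}$ at $x_1=\cdots=x_n=1$, it suffices to verify the identity for the cluster expansion and then specialize.

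First I would recall the shape of the matching expansion. For a plain arc, tagged arc, or closed curve $\gamma$ with snake/loop/band graph $\mathcal{G}_{\gamma,T}$, one has
\[
x_\gamma \;=\; \frac{1}{\operatorname{cross}(\gamma,T)} \sum_{P} x(P)\, y(P),
\]
where $\operatorname{cross}(\gamma,T)=\prod_i x_{\tau_i}$ is the product of the initial cluster variables labelling the arcs crossed by $\gamma$, and the sum ranges over perfect matchings $P$ (for plain arcs or closed curves) or good matchings (for tagged arcs). The weight $y(P)$ is the monomial $\prod_\tau y_\tau^{m_\tau(P)}$, where $m_\tau(P)$ records the number of tiles labelled $\tau$ lying in the symmetric difference $P \triangle P_-$ with the minimal matching $P_-$.

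Next I would use the bijection from Theorem~\ref{Thm:PMPoset} between matchings $P$ and ideals $I \in J(F_\gamma)$: the ideal $I(P)$ is precisely the set of tiles (equivalently, the set of crossed arcs, with the cyclic-order labelling in the loop/band cases) in $P \triangle P_-$. A direct comparison of the two definitions shows that $P \triangle P_-$ is always the union of tiles indexed by an ideal of the associated fence, loop-fence, or circular fence poset, and that every ideal is realized; this is the content of the cited theorems of MSW and Wilson. From this identification we read off $y(P(I)) = \prod_{\tau \in I} y_\tau = \operatorname{wt}(I)$, treating $I$ as the multiset of labels of its elements so that repeated crossings (from a self-intersecting arc or a loop-fence doubling) are counted with multiplicity, matching the multiset convention in the definition of $\operatorname{wt}$.

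Finally I would specialize. Setting $x_i=1$ for all $i$ collapses $\operatorname{cross}(\gamma,T)$ and each $x(P)$ to $1$, leaving
\[
F_\gamma^T(y_1,\ldots,y_n) \;=\; X_{\gamma;T}(1,\ldots,1;y_1,\ldots,y_n) \;=\; \sum_P y(P) \;=\; \sum_{I \in J(F_\gamma)} \operatorname{wt}(I),
\]
which is the claim. The cleanest presentation is a case analysis (plain arc, closed curve, tagged arc with a notched endpoint, doubly notched arc), but after Theorem~\ref{Thm:PMPoset} the argument is uniform. The main obstacle I anticipate is in the tagged-arc case: here one must be careful that the $y$-weight on a good matching of the loop graph agrees with the weight on the corresponding ideal of the loop fence poset once the two extra chains $1^s\prec\cdots\prec m^s$ (and/or $1^t\prec\cdots\prec m^t$) are adjoined, since a good matching can ``use up'' only an initial segment of spokes around the puncture. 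Checking that this constraint coincides exactly with the order-ideal condition on the adjoined chains is the one place where the verification is not formal; it follows from the construction of the loop graph in~\cite{wilson2020surface} and the hook description given above.
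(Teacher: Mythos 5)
Your proposal is correct and follows essentially the same route as the sources the paper cites for this statement: the paper itself gives no proof of Proposition~\ref{prop:IDEAL}, importing it from \cite{banaian2024skein} and \cite{ouguz2024cluster}, and those references argue exactly as you do, via the matching expansion formulas underlying Theorem~\ref{Thm:PMPoset}, the identification of an ideal with the tiles in the symmetric difference $P \triangle P_-$ (so that $y(P)=\operatorname{wt}(I)$ as multisets), and the specialization $x_i=1$. The one caveat you flag—that in the notched case the good-matching constraint on the spokes must match the order-ideal condition on the adjoined chains—is indeed where the real content lies, and deferring it to Wilson's loop-graph construction is the same resolution used in the literature.
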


By combining Theorem~\ref{Thm:PMPoset} with Proposition~\ref{prop:IDEAL}, we obtain the following:

\begin{prop}
Identifying all \( y_i \) with \( q \), the $F$-polynomial of \( x_\gamma \) is equal to the rank polynomial \( R(F_\gamma; q) \).
\end{prop}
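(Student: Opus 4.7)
The plan is to derive this directly from Proposition~\ref{prop:IDEAL}, which already provides the combinatorial expansion of $F_\gamma^T(y_1,\ldots,y_n)$ as a sum over order ideals of the fence poset $F_\gamma$, with each ideal contributing the monomial $\operatorname{wt}(I)=\prod_{\tau\in I} y_\tau$ (the product being taken as a multiset product, so that multiplicity of $\tau$ in $I$ is counted). The remaining work is purely a substitution argument.

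First I would invoke Proposition~\ref{prop:IDEAL} to write
\[
F^T_{\gamma}(y_1,\ldots,y_n) \;=\; \sum_{I\in J(F_\gamma)} \operatorname{wt}(I) \;=\; \sum_{I\in J(F_\gamma)} \prod_{\tau\in I} y_\tau.
\]
Next, I would set $y_i=q$ for every $i$ on both sides. Under this specialization, each factor $y_\tau$ in $\operatorname{wt}(I)$ becomes $q$, and since the product in $\operatorname{wt}(I)$ is taken as a multiset product (so elements of $I$ are counted with their multiplicities, which is important when $\gamma$ crosses the same arc of $T$ more than once, as happens for loop fence posets or closed curves), we obtain $\operatorname{wt}(I)\big|_{y_i=q} = q^{|I|}$, where $|I|$ denotes the cardinality of $I$ as a subset of the ground set of $F_\gamma$.

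Substituting this back yields
\[
F^T_{\gamma}(q,\ldots,q) \;=\; \sum_{I\in J(F_\gamma)} q^{|I|} \;=\; R(F_\gamma;q),
\]
which is exactly the definition of the rank polynomial of the distributive lattice $J(F_\gamma)$ given in Section~3.1, completing the proof. There is no real obstacle here; the only point that warrants a line of explanation is the multiset convention in $\operatorname{wt}(I)$, which guarantees that after the substitution $y_i\mapsto q$ the exponent of $q$ correctly tracks the rank $|I|$ of the ideal rather than just the number of distinct labels appearing in it.
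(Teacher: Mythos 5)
Your proposal is correct and follows essentially the same route as the paper, which obtains the statement by combining Theorem~\ref{Thm:PMPoset} with Proposition~\ref{prop:IDEAL}: the $F$-polynomial is the sum of $\operatorname{wt}(I)$ over ideals, and the specialization $y_i\mapsto q$ turns each weight into $q^{|I|}$. Your explicit remark about the multiset convention (so that repeated crossings of the same arc are counted with multiplicity) is exactly the point that makes the exponent equal the rank $|I|$, and the paper relies on the same convention.
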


Consequently, Theorem~\ref{thm:rankuni} can be reformulated as follows:

\begin{theorem}\label{thm:rankuni2}
The rank polynomial \( R(P_\gamma; q) \) is almost interlacing when \( \gamma \) is a plain arc.
\end{theorem}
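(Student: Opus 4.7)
The plan is to recognize Theorem~\ref{thm:rankuni2} as an immediate translation of Theorem~\ref{thm:rankuni} once $P_\gamma$ has been identified with a fence poset. Concretely, for a plain arc $\gamma$, the construction preceding Theorem~\ref{Thm:PMPoset} equips the set of crossing indices $[d]=\{1,\ldots,d\}$ with a poset structure whose only covering relations are those between consecutive crossings $i$ and $i+1$, and the direction of each such relation is determined by which side of $\gamma$ the shared vertex $s_i$ of the two crossed arcs lies on. This alternation is exactly the zigzag pattern that defines the fence poset $P(\alpha)$ associated to a composition $\alpha$ of $d-1$.

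The proof would therefore proceed in two short steps. First, read off from $\gamma$ the composition $\alpha = (\alpha_1, \ldots, \alpha_s)$ whose parts record the lengths of the maximal monotone runs in the zigzag, producing a canonical identification $P_\gamma \cong P(\alpha)$. Second, invoke Theorem~\ref{thm:rankuni} to conclude that $R(P_\gamma; q) = R(\alpha; q)$ is almost interlacing, since that theorem applies to rank polynomials of fence posets attached to arbitrary compositions.

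There is no genuine obstacle here, as the combinatorial heavy lifting was already carried out in~\cite{OR23}. The only verification required is that the construction of $P_\gamma$ for a plain arc truly coincides with the standard definition of a fence poset, and this is evident from the way the covering relations are assigned. In particular, the argument is insensitive to any self-intersections of $\gamma$, because only the ordered sequence of crossings with $T$ is used, not the ambient geometry of the curve. Thus the statement is essentially definitional once one has both the construction of $P_\gamma$ and the result of~\cite{OR23} in hand.
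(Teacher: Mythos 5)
Your proposal matches the paper: the paper presents Theorem~\ref{thm:rankuni2} as a direct reformulation of Theorem~\ref{thm:rankuni}, since for a plain arc the poset $P_\gamma$ constructed from the crossing sequence is by definition a fence poset $P(\alpha)$ for some composition $\alpha$, and the result of~\cite{OR23} then applies verbatim. Your two-step identification and invocation is exactly the paper's (implicit) argument.
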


In the next section we prove that the rank polynomial of any loop fence poset is unimodal; more precisely, its rank sequence is almost interlacing. Furthermore, we conjecture that this property extends to a broader class of tagged arcs, including those with self-intersections, suggesting that the almost interlacing property is a universal characteristic of their associated rank sequences. It is worth noting that by Theorem~\ref{thm:circuni}, this property is already established for closed curves—with the exception of cases where the composition $\alpha$ takes the form $(1,a,1,a)$ or $(a,1,a,1)$ for positive integer $a$. In all other instances, the rank sequence is both unimodal and symmetric, which implies the almost interlacing property.

\section{Unimodality of Loop Fence Posets}\label{sec:uniloop}

A natural question is whether Theorem~\ref{thm:rankuni} extends to loop fence posets; that is, whether the rank polynomial of the fence poset associated to a notched tagged arc \( \gamma \) remains unimodal. In this section, we show that the rank polynomials of loop fence posets are indeed unimodal. To verify unimodality, we consider the following two conditions on a rank sequence $(r_0,\ldots,r_{n+1})$:

\begin{align*}
    r_{i} \leq r_{n-i} \quad \text{and}\quad
    r_{n+1-i} \leq r_{i+1} \quad \text{for } 0 \leq i \leq \left\lfloor \tfrac{n-1}{2} \right\rfloor \tag{Ineq A}\\
    r_{0} \leq r_{1} \leq \ldots \leq r_{\left\lfloor \frac{n}{2} \right\rfloor} \quad \text{and} \quad 
    r_{\left\lceil \frac{n}{2} \right\rceil + 1} \geq \ldots \geq r_{n+1} \tag{Ineq B} \label{eq:ineqB}
\end{align*}

Then, we have the following useful lemma.

\begin{lemma}\label{lem:ABau}
If a sequence $(r_0,r_1,\dots,r_{n+1})$ satisfies both \eqref{eq:ineqA} and \eqref{eq:ineqB}, then the sequence is almost interlacing.
\end{lemma}

\begin{proof}
It is enough to show the unimodality, since \eqref{eq:ineqA} is already given. First, we consider the case where $n$ is even. Since \eqref{eq:ineqB} implies unimodality, we are done. Suppose that $n$ is odd. Then, we obtain
\begin{align*}
    r_{0} \leq r_{1} \leq \ldots \leq r_{\frac{n-1}{2}} \quad \text{and} \quad 
    r_{\frac{n+3}{2}} \geq \ldots \geq r_{n+1}.
\end{align*}
By \eqref{eq:ineqA}, we have $r_{\frac{n-1}{2}}\le r_{\frac{n+1}{2}}$. Hence, $(r_0,r_1,\dots,r_{n+1})$ is unimodal.
\end{proof}

To model punctured surfaces, we incorporate an additional order relation into the fence poset structure.

\begin{definition}[Augmented and loop Fence Posets]
    Let $\alpha = (\alpha_1, \alpha_2, \ldots, \alpha_s)$ be a composition of $n$. The \emph{$(i,j)$-fence poset}, $P^{(i,j)}(\alpha)$, is constructed by extending the standard fence poset $P(\alpha)$ on the vertex set $\{v_1, v_2, \ldots, v_{n+1}\}$ with the additional covering relation $v_i \prec v_j$.

    A poset $P^{(i,j)}(\alpha)$ is termed a \emph{singly notched loop fence poset} if the pair $(i,j)$ satisfies one of the following boundary conditions:
    \begin{itemize}
        \item \textbf{Type I:} The notch occurs at the beginning of the fence:
        \[
        (i,j) = \begin{cases} 
            (1, \alpha_1 + 2) & \text{if } \alpha_1 > 0, \\ 
            (\alpha_2 + 2, 1) & \text{if } \alpha_1 = 0. 
        \end{cases}
        \]
        This poset is denoted by $P^{(1)}(\alpha)$.

        \item \textbf{Type II:} The notch occurs at the end of the fence:
        \[
        (i,j) = \begin{cases} 
            (n+1, n - \alpha_s) & \text{if } s \text{ is even}, \\ 
            (n - \alpha_s, n+1) & \text{if } s \text{ is odd}. 
        \end{cases}
        \]
        This poset is denoted by $P^{(n+1)}(\alpha)$.
    \end{itemize}

    If the poset satisfies both Type I and Type II conditions simultaneously, it is designated as a \emph{doubly notched loop fence poset}, denoted $P^{(1)(n+1)}(\alpha)$. Both singly notched loop fence posets and doubly loop notched fence posets are referred to as \emph{notched fence posets}.
\end{definition}

For brevity, we shall refer to \emph{singly notched loop fence posets} simply as \emph{singly notched posets}. Analogously, \emph{doubly notched loop fence posets} will be referred to as \emph{doubly notched posets}.

\begin{figure}[H]
    \centering
\begin{tikzpicture}
    \draw (0,0.5) to (0.5,1);
    \draw (0.5,1) to (2,-0.5);
    \draw (2,-0.5) to (3.5,1);
    \draw (1.7,-0.2) to (3.5,1);
    \node at (4.1,1) {$v_{n+1}$};
    \node at (2,-0.8) {$v_{n-\alpha_{s}+1}$};
    \node at (1,-0.3) {$v_{n-\alpha_{s}}$};
    \filldraw[black] (3.5,1) circle (2pt);
    \filldraw[black] (1.7,-0.2) circle (2pt);
    \filldraw[black] (2,-0.5) circle (2pt);

    \draw (6,0) to (6.5,-0.5);
    \draw (6.5,-0.5) to (8,1);
    \draw (8,1) to (9.5,-0.5);
    \draw (7.7,0.7) to (9.5,-0.5);
    \node at (10.1,-0.5) {$v_{n+1}$};
    \node at (8,1.3) {$v_{n-\alpha_{s}+1}$};
    \node at (7,0.8) {$v_{n-\alpha_{s}}$};
    \filldraw[black] (9.5,-0.5) circle (2pt);
    \filldraw[black] (7.7,0.7) circle (2pt);
    \filldraw[black] (8,1) circle (2pt);

\end{tikzpicture}
    \caption{Structure of \( P^{(n+1)}(\alpha) \) when \( s \) is odd(left) and when \(s\) is even(right)}
    \label{fig:RPFP}
\end{figure}
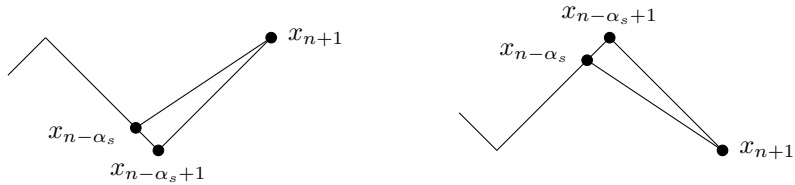

Note that this construction coincides with the loop fence posets associated to notched arcs. A singly notched poset corresponds to a singly notched arc, while a doubly notched poset corresponds to a doubly notched arc. We denote the associated rank polynomial by \( R^{(i,j)}(\alpha; q) \) for a general \((i,j)\)-fence poset, \( R^{(1)}(\alpha; q) \) for \( P^{(1)}(\alpha) \), \( R^{(n+1)}(\alpha; q) \) for \( P^{(n+1)}(\alpha) \), and \( R^{(1)(n+1)}(\alpha; q) \) for \( P^{(1)(n+1)}(\alpha) \).

\begin{remark}
    The structure of a loop fence poset can be viewed as the augmentation of a fence poset with a circular fence poset. This construction suggests that while the rank polynomial is modified, the underlying combinatorial properties—specifically the interlacing property—remain largely invariant. Consequently, it is expected that the property of unimodality is preserved in this setting.
\end{remark}

\subsection{Unimodality of Singly Notched Posets}

In this subsection, we establish the unimodality of rank polynomials for singly notched posets. Specifically, for any composition \( \alpha \), the poset \( P^{(1)}(\alpha) \) shares the same rank polynomial as the reflected poset \( P^{(n+1)}(\alpha) \). This equivalence follows from the fact that reflecting a poset preserves its partial order structure, and hence its rank polynomial remains unchanged. Therefore, it suffices to prove the unimodality of \( P^{(n+1)}(\alpha) \), which in turn implies the same for \( P^{(1)}(\alpha) \). As a result, we conclude that the rank polynomials of all singly notched posets are unimodal.

\begin{prop}\label{prop:almint}
Let \( r(\alpha) = (r_0, r_1, \ldots, r_{n}) \) be an almost interlacing sequence. Then:
\begin{itemize}
    \item If \( i \leq j \) and \( i + j \leq n - 2 \), then \( r_i \leq r_j \).
    \item If \( i \leq j \) and \( i + j \geq n + 2 \), then \( r_i \geq r_j \).
\end{itemize}
\end{prop}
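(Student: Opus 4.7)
The plan is to prove both parts by a case analysis based on the position of the peak of the unimodal sequence. Let $k$ be an index at which $(r_0, \ldots, r_n)$ attains its maximum, so $r_0 \leq \cdots \leq r_k$ and $r_k \geq \cdots \geq r_n$. The almost interlacing hypothesis also supplies the boundary reflection-type inequalities from (ineqA): on the lower side, pairs whose index sum meets the lower (ineqA) threshold satisfy $r_\ell \leq r_m$ (smaller index dominated), and symmetrically on the upper side. The strategy is to chain one such boundary bound with monotonicity on an appropriate half of the sequence.

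For the first claim, fix $i \leq j$ with $i + j \leq n - 2$. From $2i \leq i + j \leq n - 2$, the index $i$ lies in the admissible range for the lower (ineqA), yielding an index $j^{\ast} \geq j$ with $r_i \leq r_{j^{\ast}}$. I then split into two cases:
\begin{itemize}
\item If $j \leq k$, then $i \leq j \leq k$ lies in the weakly increasing portion of the sequence, and $r_i \leq r_j$ follows immediately from unimodality.
\item If $j > k$, then $j^{\ast} \geq j > k$, so $j$ and $j^{\ast}$ both lie in the weakly decreasing portion. Monotonicity gives $r_{j^{\ast}} \leq r_j$, and chaining with the (ineqA) bound yields $r_i \leq r_{j^{\ast}} \leq r_j$.
\end{itemize}

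The second claim follows by the symmetric mirror argument. Given $i \leq j$ with $i + j \geq n + 2$, the hypothesis $2j \geq i + j \geq n + 2$ places $j$ in the admissible range for the upper (ineqA), which produces an index $i^{\ast} \leq i$ with $r_{i^{\ast}} \geq r_j$. If $i \geq k$, unimodality on $[k, n]$ directly gives $r_i \geq r_j$; otherwise $i < k$, in which case $i^{\ast} \leq i < k$ places both indices in the weakly increasing portion, so $r_{i^{\ast}} \leq r_i$, and combining yields $r_i \geq r_{i^{\ast}} \geq r_j$.

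I do not anticipate a serious obstacle: the argument is a clean two-case split requiring no induction or delicate combinatorics. The only bookkeeping is to verify that the applicability side conditions for (ineqA) follow automatically from the index-sum hypotheses, and that the auxiliary indices $j^{\ast}$ and $i^{\ast}$ returned by (ineqA) indeed lie on the correct side of the peak $k$ to permit chaining with monotonicity. The trivial case $i = j$ is immediate and needs no separate treatment.
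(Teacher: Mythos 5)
Your proof is correct and follows essentially the same strategy as the paper: chain the single reflection inequality from \eqref{eq:ineqA} (i.e.\ $r_i \le r_{n-1-i}$, resp.\ $r_j \le r_{n+1-j}$) with monotonicity of the unimodal sequence on the appropriate side. The only cosmetic difference is that you split cases at the actual peak $k$ rather than at $\lfloor n/2 \rfloor$, which lets you avoid invoking \eqref{eq:ineqB} and work directly from the definition of almost interlacing.
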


\begin{proof}
When \( j \leq \left\lfloor \frac{n}{2} \right\rfloor \), observe that unimodality gives \( r_i \leq r_j \). For the case where \( j > \left\lfloor \frac{n}{2} \right\rfloor \), observe that \eqref{eq:ineqA} gives \( r_i \leq r_{n-1  - i} \), and unimodality and \( i + j \leq n - 2 \) implies \( r_{n  -1- i} \leq r_j \). Combining these yields \( r_i \leq r_j \). A similar argument applies to the second inequality.
\end{proof}

\begin{lemma}\label{lem:left}
    Let $\overline{P}(\alpha)$ be a circular fence poset and $r(\alpha)=(r_0,r_1,\dots,r_n)$. Then, if $i\le j$ and $i+j\le n-2$, then $r_i\le r_j$. Moreover, if $i\le j$ and $i+j\ge n+2$, then $r_i\ge r_j$.
\end{lemma}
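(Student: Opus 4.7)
The plan is to parallel the argument of Proposition \ref{prop:almint}, using the symmetry of $r(\alpha)$ (Theorem \ref{thm:circular}) in place of \eqref{eq:ineqA} and the unimodality result of Theorem \ref{thm:circuni} in place of \eqref{eq:ineqB}, and then to dispatch the two exceptional compositions by direct computation against the explicit form of the rank sequence.

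First, a symmetry reduction disposes of the second inequality: under the involution $(i,j) \mapsto (n-j, n-i)$, the hypothesis $i + j \ge n + 2$ becomes $(n-j) + (n-i) \le n - 2$ with $n-j \le n-i$, and $r_i \ge r_j$ becomes $r_{n-j} \le r_{n-i}$ via $r_k = r_{n-k}$. Hence it suffices to prove the first statement. Assume next that $\alpha$ is not of the form $(a,1,a,1)$ or $(1,a,1,a)$. Then $r(\alpha)$ is unimodal by Theorem \ref{thm:circuni}. If $j \le \lfloor n/2 \rfloor$, both indices lie on the ascending half and $r_i \le r_j$ follows from unimodality directly. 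If $j > \lfloor n/2 \rfloor$, then $n - j \le \lfloor n/2 \rfloor$, and $i \le n - 2 - j < n - j$ places both $i$ and $n-j$ on the ascending half, so unimodality gives $r_i \le r_{n-j}$ and symmetry gives $r_{n-j} = r_j$.

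For the exceptional families, $(a,1,a,1)$ and $(1,a,1,a)$ produce identical rank sequences by reflection, so it suffices to treat $\alpha = (a,1,a,1)$. Here $n = 2a+2$, and the rank sequence stated after Theorem \ref{thm:circuni} is $r_i = i+1$ for $0 \le i \le a$, $r_{a+1} = a$, $r_{a+2} = a+1$, and $r_i = 2a+3-i$ for $a+3 \le i \le 2a+2$. I would verify $r_i \le r_j$ under $i \le j$ and $i+j \le 2a$ by splitting on the location of $j$ relative to the two peaks at $a$ and $a+2$: if $j \le a$, monotonicity of the ascending initial segment is immediate; if $j = a+1$, the bound forces $i \le a-1$, giving $r_i = i+1 \le a = r_j$; if $j = a+2$, the bound forces $i \le a-2$, giving $r_i \le a-1 < a+1 = r_j$; and if $j \ge a+3$, the bound yields $i \le 2a-j \le a-3$, whence $r_i = i+1 \le 2a-j+1 < 2a+3-j = r_j$.

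There is no deep obstacle here. The essential observation is that the constraint $i + j \le n - 2$ always keeps $i$ strictly below the dip at index $a + 1$ whenever $j$ sits near or past it, so comparisons never cross the small valley of the exceptional rank sequence in the wrong direction. The only nontrivial work beyond Proposition \ref{prop:almint} is this finite-parameter case check, and it is routine once the rank sequence is written out.
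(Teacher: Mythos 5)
Your proposal is correct and follows essentially the same route as the paper: handle non-exceptional compositions via symmetry (Theorem~\ref{thm:circular}) plus unimodality (Theorem~\ref{thm:circuni}), exactly as in Proposition~\ref{prop:almint}, reduce the second inequality to the first by the index reflection $i\mapsto n-i$, and check the exceptional $(1,a,1,a)$/$(a,1,a,1)$ family directly against the explicit rank sequence. Your case analysis for the exceptional family is in fact slightly more detailed than the paper's, which only singles out $j=a+1$.
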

\begin{proof}
If $\alpha\neq(1,a,1,a)$, then $r(\alpha)$ is almost unimodal since it is symmetric and unimodal, and the statement from Proposition~\ref{prop:almint}. Now, we consider the case where $\alpha=(1,a,1,a)$. Then, $n=2a+2$ and $r(\alpha)=(1,2,\dots,a,a+1,a,a+1,a,\dots,1)$. When $j\neq a+1$, we can see that $a_i\le a_j$ by the same method above. Assume that $j=a+1$. It follows from $r(\alpha)=(1,2,\dots,a,a+1,a,a+1,a,\dots,1)$. The second case can be proved in the same way by applying reflection on the indices by changing $i$ to $n-i$ for $0\leq i \leq n$.
\end{proof}

\begin{lemma}\label{lem:eq1}
    The rank polynomial $R^{(n+1)}(\alpha; q)$ of the singly notched poset $P^{(n+1)}(\alpha)$ can be decomposed into the difference of two circular fence poset polynomials, weighted by a power of $q$, $q^{\alpha_s+1}$.
\end{lemma}
\begin{proof}
Let us consider the case when \( s \) is even. The case for odd \( s \) follows by simply exchanging each occurrence of \( v_n \) with \( v_{n - \alpha_s + 1} \).

Define \( P_T^{(n+1)}(\alpha) \) as the poset obtained by adding a new vertex \( v_T \) lying above both \( v_1 \) and \( v_{n - \alpha_s + 1} \) in the fence poset associated to \( \alpha \). Let \( R_T^{(n+1)}(\alpha; q) \) be the corresponding rank polynomial. The ideals of \( P_T^{(n+1)}(\alpha) \) are partitioned into those that include \( v_T \) and those that do not.

Let \( \beta \) be the composition obtained from \( \alpha \) by removing \( v_T \) and all vertices below it. Let \( d_1 \) denote the number of vertices below \( v_T \). Note that \( d_1 \geq \alpha_{s-1} + \alpha_s + 2 \), since \( v_T \) lies above every vertex \( v_i \) with \( n - \alpha_{s-1} - \alpha_s + 1 \leq i \leq n+1 \), and also above \( v_1 \). Then we have:
\[
R_T^{(n+1)}(\alpha; q) = R^{(n+1)}(\alpha; q) + q^{d_1 + 1} R(\beta; q).
\]

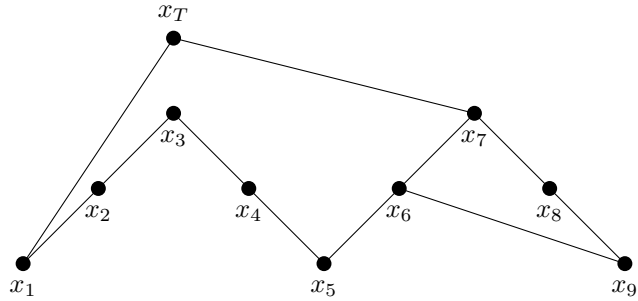
\begin{figure}[H]
    \centering
    \begin{tikzpicture}
        \coordinate (x1) at (1, 2);
        \coordinate (x2) at (2, 3);
        \coordinate (x3) at (3, 4);
        \coordinate (x4) at (4, 3);
        \coordinate (x5) at (5, 2);
        \coordinate (x6) at (6, 3);
        \coordinate (x7) at (7, 4);
        \coordinate (x8) at (8, 3);
        \coordinate (x9) at (9, 2);
        \coordinate (xT) at (3, 5);

        \node[circle, fill=black, inner sep=2pt, label=below:$v_{1}$] at (x1) {};
        \node[circle, fill=black, inner sep=2pt, label=below:$v_{2}$] at (x2) {};
        \node[circle, fill=black, inner sep=2pt, label=below:$v_{3}$] at (x3) {};
        \node[circle, fill=black, inner sep=2pt, label=below:$v_{4}$] at (x4) {};
        \node[circle, fill=black, inner sep=2pt, label=below:$v_{5}$] at (x5) {};
        \node[circle, fill=black, inner sep=2pt, label=below:$v_{6}$] at (x6) {};
        \node[circle, fill=black, inner sep=2pt, label=below:$v_{7}$] at (x7) {};
        \node[circle, fill=black, inner sep=2pt, label=below:$v_{8}$] at (x8) {};
        \node[circle, fill=black, inner sep=2pt, label=below:$v_{9}$] at (x9) {};
        \node[circle, fill=black, inner sep=2pt, label=above:$v_{T}$] at (xT) {};

        \draw (x1) -- (x3);
        \draw (x3) -- (x5);
        \draw (x5) -- (x7);
        \draw (x7) -- (x9);
        \draw (x6) -- (x9);
        \draw (x1) -- (xT);
        \draw (x7) -- (xT);
    \end{tikzpicture}
    \caption{An example of \( P_T^{(n+1)}(\alpha) \) when \( \alpha = (2,2,2,2) \)}
    \label{fig:PT}
\end{figure}

Next, let \( \overline{P_T}(\alpha) \) be the circular poset obtained by removing the relation \( v_{n - \alpha_s} \prec v_{n - \alpha_s + 1} \) from \( P_T^{(n+1)}(\alpha) \), and let \( \overline{R_T}(\alpha; q) \) denote its rank polynomial.

Let \( \gamma \) be the poset obtained from \( \overline{P_T}(\alpha) \) by removing \( v_{n-\alpha_s} \) and all the vertices above it and removing \( v_{n - \alpha_s + 1} \) along with all the vertices below it. The ideals of \( \overline{P_T}(\alpha) \) then consist of:
(1) those in \( P_T^{(n+1)}(\alpha) \) and
(2) additional ideals of \( \gamma \), which correspond to including \( v_{n - \alpha_s + 1} \) but not \( v_{n - \alpha_s} \), weighted by \( q^{\alpha_s + 1} \).

Thus, we have the following.
\[
\overline{R_T}(\alpha; q) = R_T^{(n+1)}(\alpha; q) + q^{\alpha_s + 1} R(\gamma; q),
\]
which implies
\[
R^{(n+1)}(\alpha; q) = \overline{R_T}(\alpha; q) - q^{d_1 + 1} R(\beta; q) - q^{\alpha_s + 1} R(\gamma; q).
\]

Now, define the poset \( \delta \) as follows. Let \( \beta \cup \gamma \) be the union of the underlying sets of \( \beta \) and \( \gamma \), preserving their partial orders. \( v_T \) will be one endpoint of $\beta\cup \gamma$ and let the other vertex as \( v_P \) where \( 1 \leq P \leq n - \alpha_s - 1 \). To extend this structure, we introduce an additional vertex $v_{P+1}$ and prescribe a relation between $v_P$ and $v_{P+1}$ consistent with the composition $\alpha$. Finally, we impose the additional relation $v_T \prec v_{P+1}$ and relabel $v_{P+1}$ as $v_{\tilde{T}}$. The resulting structure constitutes the circular fence poset $\delta$.

\begin{table}[htbp]
    \centering
    \begin{tabular}{|c|c|c|}
    \hline
           \begin{tikzpicture}
        \coordinate (x2) at (2, .5);
        \coordinate (x3) at (3, 1);
        \coordinate (x4) at (4, .5);

        \node[circle, fill=black, inner sep=2pt, label={[below,xshift=5pt,yshift=-5pt]{$v_{2}$}}] (nodex2) at (x2) {};
        \node[circle, fill=black, inner sep=2pt, label=below:$v_{3}$] (nodex3) at (x3) {};
        \node[circle, fill=black, inner sep=2pt, label=below:$v_{4}$] (nodex4) at (x4) {};

        \draw[-] (x2) -- (x3);
        \draw[-] (x3) -- (x4);
    \end{tikzpicture}
  & 
      \begin{tikzpicture}
        \coordinate (x1) at (1, 0);
        \coordinate (x2) at (2, .5);
        \coordinate (x3) at (3, 1);
        \coordinate (x4) at (4, .5);
        \coordinate (x5) at (5, 0);
        \coordinate (xT) at (3, 2);

        \node[circle, fill=black, inner sep=2pt, label=below:$v_{1}$] (nodex1) at (x1) {};
        \node[circle, fill=black, inner sep=2pt, label={[below,xshift=5pt,yshift=-5pt]{$v_{2}$}}] (nodex2) at (x2) {};
        \node[circle, fill=black, inner sep=2pt, label=below:$v_{3}$] (nodex3) at (x3) {};
        \node[circle, fill=black, inner sep=2pt, label=below:$v_{4}$] (nodex4) at (x4) {};
        \node[circle, fill=black, inner sep=2pt, label={[above,xshift=4pt]{$v_{5}$}}] (nodex5) at (x5) {};
        \node[circle, fill=black, inner sep=2pt, label={[above,xshift=4pt]{$v_{T}$}}] (nodexT) at (xT) {};

        \draw[-] (x1) -- (x3);
        \draw[-] (x3) -- (x5);
        \draw[-] (x1) -- (xT);
    \end{tikzpicture}

&    \begin{tikzpicture}
        \coordinate (x1) at (1, 0);
        \coordinate (x2) at (2, .5);
        \coordinate (x3) at (3, 1);
        \coordinate (x4) at (4, .5);
        \coordinate (x5) at (5, 0);
        \coordinate (xT) at (3, 2);

        \node[circle, fill=black, inner sep=2pt, label=below:$v_{1}$] (nodex1) at (x1) {};
        \node[circle, fill=black, inner sep=2pt, label={[below,xshift=5pt,yshift=-5pt]{$v_{2}$}}] (nodex2) at (x2) {};
        \node[circle, fill=black, inner sep=2pt, label=below:$v_{3}$] (nodex3) at (x3) {};
        \node[circle, fill=black, inner sep=2pt, label=below:$v_{4}$] (nodex4) at (x4) {};
        \node[circle, fill=black, inner sep=2pt, label={[above,xshift=4pt]{$v_{5}=v_P$}}] (nodex5) at (x5) {};
        \node[circle, fill=black, inner sep=2pt, label={[above,xshift=4pt]{$v_{T}$}}] (nodexT) at (xT) {};

        \draw[-] (x1) -- (x3);
        \draw[-] (x3) -- (x5);
        \draw[-] (x1) -- (xT);
    \end{tikzpicture}\\
    \hline
    $\beta$& $\gamma$ & $\beta\cup\gamma$\\
    \hline
    \end{tabular}
    \caption{$\beta$, $\gamma$, and $\beta\cup\gamma$ for $P^{(n+1)}_T(\alpha)$ when $\alpha=(2,2,2,2)$}
    \label{tab:topex}
\end{table}

\begin{figure}[H]
    \centering
    \begin{tikzpicture}
        \coordinate (x1) at (1, 0);
        \coordinate (x2) at (2, .5);
        \coordinate (x3) at (3, 1);
        \coordinate (x4) at (4, .5);
        \coordinate (x5) at (5, 0);
        \coordinate (xT) at (3, 2);
        \coordinate (xT') at (6, 3);

        \node[circle, fill=black, inner sep=2pt, label=below:$v_{1}$] (nodex1) at (x1) {};
        \node[circle, fill=black, inner sep=2pt, label={[below,xshift=5pt,yshift=-5pt]{$v_{2}$}}] (nodex2) at (x2) {};
        \node[circle, fill=black, inner sep=2pt, label=below:$v_{3}$] (nodex3) at (x3) {};
        \node[circle, fill=black, inner sep=2pt, label=below:$v_{4}$] (nodex4) at (x4) {};
        \node[circle, fill=black, inner sep=2pt, label={[below,yshift=-3pt]{$v_{5}=v_P$}}] (nodex5) at (x5) {};
        \node[circle, fill=black, inner sep=2pt, label={[below,xshift=4pt,yshift=-5pt]{$v_{T}$}}] (nodexT) at (xT) {};
        \node[circle, fill=black, inner sep=2pt, label={[above,xshift=4pt]{$v_6=v_{\tilde{T}}$}}] (nodexT') at (xT') {};

        \draw[-] (x1) -- (x3);
        \draw[-] (x3) -- (x5);
        \draw[-] (x5) -- (xT');
        \draw[-] (x1) -- (xT);
        \draw[-] (xT) -- (xT');
    \end{tikzpicture}
    \caption{$\delta$ for $P^{(n+1)}_T(\alpha)$ when $\alpha=(2,2,2,2)$}
    \label{fig:delta}
\end{figure}
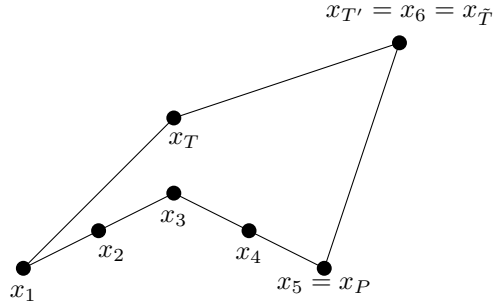

There are now two types of ideals in \( \delta \):
- those that include \( v_{\tilde{T}} \), contributing \( q^{d_1 - \alpha_s} R(\beta; q) \), - and those that exclude it, contributing \( R(\gamma; q) \).

Thus, we obtain:
\[
\overline{R}(\delta; q) = q^{d_1 - \alpha_s} R(\beta; q) + R(\gamma; q).
\]

Substituting into the earlier expression gives:
\begin{align}\label{eq:1}
R^{(n+1)}(\alpha; q) = \overline{R_T}(\alpha; q) - q^{\alpha_s+ 1} \overline{R}(\delta; q).
\end{align}
\end{proof}

\begin{lemma}\label{lem:eq2}
    For the rank polynomial $R^{(n+1)}(\alpha; q)$ associated with a singly notched poset $P^{(n+1)}(\alpha)$, $qR^{(n+1)}(\alpha; q)$ can be expressed as the difference between two circular fence poset polynomials.
\end{lemma}

\begin{proof}
Similarly, let \( P_B^{(n+1)}(\alpha) \) be the poset obtained by adding a new vertex \( v_B \) lying below both \( v_1 \) and \( v_{n+1} \) in the poset \( P^{(n+1)}(\alpha) \). Let \( R_B^{(n+1)}(\alpha; q) \) denote its rank polynomial. 

\begin{figure}[H]
    \centering
    \begin{tikzpicture}
        \coordinate (x1) at (1, 0);
        \coordinate (x2) at (2, 1);
        \coordinate (x3) at (3, 2);
        \coordinate (x4) at (4, 1);
        \coordinate (x5) at (5, 0);
        \coordinate (x6) at (6, 1);
        \coordinate (x7) at (7, 2);
        \coordinate (x8) at (8, 1);
        \coordinate (x9) at (9, 0);
        \coordinate (xB) at (5, -1);

        \node[circle, fill=black, inner sep=2pt, label=below:$v_{1}$] (nodex1) at (x1) {};
        \node[circle, fill=black, inner sep=2pt, label=below:$v_{2}$] (nodex2) at (x2) {};
        \node[circle, fill=black, inner sep=2pt, label=below:$v_{3}$] (nodex3) at (x3) {};
        \node[circle, fill=black, inner sep=2pt, label=below:$v_{4}$] (nodex4) at (x4) {};
        \node[circle, fill=black, inner sep=2pt, label=below:$v_{5}$] (nodex5) at (x5) {};
        \node[circle, fill=black, inner sep=2pt, label=below:$v_{6}$] (nodex6) at (x6) {};
        \node[circle, fill=black, inner sep=2pt, label=below:$v_{7}$] (nodex7) at (x7) {};
        \node[circle, fill=black, inner sep=2pt, label=below:$v_{8}$] (nodex8) at (x8) {};
        \node[circle, fill=black, inner sep=2pt, label=below:$v_{9}$] (nodex9) at (x9) {};
        \node[circle, fill=black, inner sep=2pt, label=below:$v_{B}$] (nodexT) at (xB) {};

        \draw[-] (x1) -- (x3);
        \draw[-] (x3) -- (x5);
        \draw[-] (x5) -- (x7);
        \draw[-] (x7) -- (x9);
        \draw[-] (x6) -- (x9);
        \draw[-] (x1) -- (xB);
        \draw[-] (x9) -- (xB);
    \end{tikzpicture}
    \caption{An example of \( P_B^{(n+1)}(\alpha) \) when \( \alpha = (2,2,2,2) \)}
    \label{fig:PB}
\end{figure}

Now, define \( \overline{P_B}(\alpha) \) as the poset obtained from \( P_B^{(n+1)}(\alpha) \) by removing the relation \( v_{n+1} \prec v_{n - \alpha_s} \), and let \( \overline{R_B}(\alpha; q) \) be the corresponding rank polynomial. 

The number of vertices below \( v_{n - \alpha_s} \) in \( \overline{P_B}(\alpha) \) is \(\alpha_{s-1}-1 \). By applying the same decomposition argument as before, we obtain:
\begin{align*}
    R_B^{(n+1)}(\alpha; q) &= q R^{(n+1)}(\alpha; q) + R(\beta'; q), \\
    \overline{R_B}(\alpha; q) &= R_B^{(n+1)}(\alpha; q) + q^{\alpha_{s-1}} R(\gamma'; q),
\end{align*}
where \( \beta' \) is the composition obtained from \( \overline{P_B}(\alpha) \) by removing \( v_B \) and all elements above it, and \( \gamma' \) is the composition obtained from \( \overline{P_B}(\alpha) \) by removing: \( v_{n - \alpha_s} \) and all vertices below it, and \( v_{n+1} \) and all vertices above it.

Combining the above, we have:
\[
q R^{(n+1)}(\alpha; q) = \overline{R_B}(\alpha; q) - R(\beta'; q) - q^{\alpha_{s-1}} R(\gamma'; q).
\]

Now, we construct a circular fence poset \( \delta' \) as follows. Form the union \( \beta' \cup \gamma' \), preserving the internal partial orders of \( \beta' \) and \( \gamma' \). Let \( v_B \) be the terminal vertex of \( \beta' \), and let \( v_Q \), with \( 1 \leq Q \leq n - \alpha_s - 1 \), be a terminal vertex of \( \gamma' \). Introduce a new vertex \( v_{Q+1} \), and insert the same relation between \( v_Q \) and \( v_{Q+1} \) as it appears in the original fence poset \( P(\alpha) \). Then add a relation identify \( v_{B} \succ v_{Q+1} \) and relabel $v_{Q+1}$ as \( v_{\tilde{B}} \), resulting in the circular fence poset \( \delta' \).

As before, we classify ideals of \( \delta' \) into those that contain \( v_{\tilde{B}} \) and those that do not. This yields:
\[
\overline{R}(\delta'; q) = R(\beta'; q) + q^{\alpha_{s-1}} R(\gamma'; q).
\]
Substituting into the earlier identity, we obtain:
\begin{align}
    q R^{(n+1)}(\alpha; q) = \overline{R_B}(\alpha; q) - \overline{R}(\delta'; q).
\end{align}

\end{proof}

\begin{lemma}\label{lem:sineqA}
    The rank sequence $r(\alpha)=(r_0,r_1,\ldots,r_{n+1})$ of a singly notched poset $P^{(n+1)}(\alpha)$ satisfies \eqref{eq:ineqA}.
\end{lemma}
\begin{proof}

By Lemma~\ref{lem:eq1}, $R^{(n+1)}(\alpha; q)$ can be expressed as the difference between two circular fence poset polynomials: \(R^{(n+1)}(\alpha; q) = \overline{R_T}(\alpha; q) - q^{\alpha_s+ 1} \overline{R}(\delta; q)\)

According to Theorem~\ref{thm:circular}, the polynomial $\overline{R_T}(\alpha; q)$ is symmetric. Note that $\deg(\overline{R_T}(\alpha; q)) = n + 2$ and $\deg(\overline{R}(\delta; q)) = n - \alpha_s + 1$. Let $(a_i)$ and $(b_i)$ denote the rank sequences of these respective polynomials.

The symmetry of $\overline{R_T}(\alpha; q)$ implies that its coefficients satisfy:\[
a_i = a_{n + 2 - i} \quad \text{for all } i.
\]
Since \( b_{n - \alpha_s + 1 - i} \geq 0 \), we get:
\[
r_i = a_i = a_{n + 2 - i} \geq a_{n + 2 - i} - b_{n - \alpha_s + 1 - i} = r_{n + 2 - i}, \quad \text{for } i \leq \alpha_s.
\]

In addition, for \( \alpha_s \leq i \leq \left\lfloor \frac{n+2}{2} \right\rfloor \), we have:
\[
r_i = a_i - b_{i - \alpha_s - 1} \geq a_{n + 2 - i} - b_{n - \alpha_s + 1 - i} = r_{n + 2 - i},
\]
because \( (i - \alpha_s - 1) + (n - \alpha_s + 1 - i) = n - 2\alpha_s \leq \deg(\overline{R}(\delta; q)) - 2 \), and thence by Lemma~\ref{lem:left}.

Therefore, we conclude that:
\[
r_i \geq r_{n + 2 - i} \quad \text{for all } i = 0,1, 2, \ldots, \left\lfloor \frac{n+2}{2} \right\rfloor.
\]

Let's move on to the other inequalities. By Lemma~\ref{lem:eq2}, $qR^{(n+1)}(\alpha; q)$ can be expressed as the difference between two circular fence poset polynomials: \(q R^{(n+1)}(\alpha; q) = \overline{R_B}(\alpha; q) - \overline{R}(\delta'; q).\)

Now, observe that \( \deg(\overline{R_B}(\alpha; q)) = n + 2 \), and \( \deg(\overline{R}(\delta'; q)) = n - \alpha_s + 1 \). Let \( (t_0, t_1, \dots, t_{n+2}) \) and \( (u_0, u_1, \dots, u_{n - \alpha_s + 1}) \) be the rank sequences of \( \overline{R_B}(\alpha; q) \) and \( \overline{R}(\delta'; q) \), respectively.

By Theorem~\ref{thm:circular} and Lemma~\ref{lem:left}, we have \( t_i = t_{n + 2 - i} \) and $u_{i+1}\ge u_{n+1-i}$ for \( i = 0, 1, \dots, \left\lfloor \frac{n - 1}{2} \right\rfloor \) since $i+1\le n+1-i$ and $(i+1)+(n+1-i)=n+2\ge (n-\alpha_s+1)+2$. Therefore, we obtain:
\begin{align}
    r_i = t_{i+1} - u_{i+1} \leq t_{n + 1 - i} - u_{n + 1 - i} = r_{n - i},
\end{align}
for \( i = 0, 1, \dots, \left\lfloor \frac{n - 1}{2} \right\rfloor \).

\end{proof}

\begin{remark}
Note that the proof follows the similar technique in the proof of Claim 5 in~\cite{OR23}. The difference comes from the usage of Lemma~\ref{lem:left} where we include the special case when the circular fence poset is $(1,a,1,a)$ or $(a,1,a,1)$ for some positive integer $a$ when its rank sequence is not unimodal.
\end{remark}

\begin{lemma}\label{lem:sineqB}
    The rank sequence \( r(\alpha) = (r_{0}, r_{1}, \ldots, r_{n+1}) \) of the singly notched poset \( P^{(n+1)}(\alpha) \) satisfies inequality~\textnormal{\eqref{eq:ineqB}}.
\end{lemma}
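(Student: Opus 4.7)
The plan is to reuse the two decomposition identities from the preceding lemma, namely
\begin{align*}
R^{(n+1)}(\alpha;q) &= \overline{R_T}(\alpha;q) - q^{\alpha_s+1}\,\overline{R}(\delta;q), \\
q\,R^{(n+1)}(\alpha;q) &= \overline{R_B}(\alpha;q) - \overline{R}(\delta';q),
\end{align*}
but now to extract a chain of monotonic comparisons on each half rather than the reflected comparisons used for \eqref{eq:ineqA}. Write $(a_i)_{i=0}^{n+2}$ and $(b_j)_{j=0}^{n-\alpha_s+1}$ for the rank sequences of $\overline{R_T}(\alpha;q)$ and $\overline{R}(\delta;q)$, respectively, with the convention $b_j=0$ for $j\notin[0,n-\alpha_s+1]$. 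By Theorem~\ref{thm:circular} both sequences are symmetric, and apart from the exceptional posets $(a,1,a,1)$ and $(1,a,1,a)$ they are unimodal by Theorem~\ref{thm:circuni}. Identity (1) then gives the coefficientwise formula
\[
r_{i+1}-r_i = (a_{i+1}-a_i) - (b_{i-\alpha_s}-b_{i-\alpha_s-1}).
\]

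To establish the first half $r_0\le r_1\le\cdots\le r_{\lfloor(n+1)/2\rfloor}$, I would split at $i=\alpha_s$. For $i<\alpha_s$ both $b$-terms vanish and the claim reduces to $a_i\le a_{i+1}$, which follows from Lemma~\ref{lem:left} applied to the circular rank sequence $(a_i)$ of length $n+2$, since $i+(i+1)\le n$ throughout this range. For $\alpha_s\le i<\lfloor(n+1)/2\rfloor$ both differences can be positive, and I would use the symmetries $a_j=a_{n+2-j}$ and $b_j=b_{n-\alpha_s+1-j}$ to rewrite the $b$-slope at position $i-\alpha_s$ as a $b$-slope at the reflected position (which sits on the descending half of $b$), then compare against the corresponding $a$-slope via Proposition~\ref{prop:almint} and Lemma~\ref{lem:left}. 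The key geometric input is that the shift by $\alpha_s+1$ places the peak of $q^{\alpha_s+1}\overline{R}(\delta;q)$ (at $(n+\alpha_s+3)/2$) strictly to the right of the peak of $\overline{R_T}(\alpha;q)$ (at $(n+2)/2$), so in the range under consideration the $a$-slope dominates.

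For the opposite half $r_{\lceil(n+1)/2\rceil+1}\ge\cdots\ge r_{n+1}$, I would run the mirror argument on identity (2). Writing $(t_i)$ and $(u_j)$ for the rank sequences of $\overline{R_B}(\alpha;q)$ and $\overline{R}(\delta';q)$, we have $r_i=t_{i+1}-u_{i+1}$; large $i$ on the left now corresponds to the descending halves of the symmetric sequences $(t_i)$ and $(u_j)$, and the same slope comparison, now with both slopes non-positive, yields the reversed monotonicity.

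The principal obstacle is the range $\alpha_s\le i<\lfloor(n+1)/2\rfloor$: the difference of two shifted symmetric unimodal polynomials supported on overlapping intervals need not be unimodal in general, so the argument must locate the peaks of $\overline{R_T}(\alpha;q)$ and $q^{\alpha_s+1}\overline{R}(\delta;q)$ precisely relative to $\lfloor(n+1)/2\rfloor$ to rule out any drop. A secondary nuisance is the exceptional circular fence posets $(a,1,a,1)$ and $(1,a,1,a)$, whose rank sequences are not unimodal; one must verify whether these can appear as $\overline{P_T}(\alpha)$, $\overline{P_B}(\alpha)$, $\overline{P}(\delta)$, or $\overline{P}(\delta')$ for the $\alpha$ under consideration, and if so handle the remaining small cases by direct inspection.
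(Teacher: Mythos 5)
Your approach is genuinely different from the paper's, and unfortunately its central step does not go through. After splitting off the range $i<\alpha_s$, you must show, for $\alpha_s\le i<\lfloor (n+1)/2\rfloor$, the slope comparison
\[
a_{i+1}-a_i \;\ge\; b_{i-\alpha_s}-b_{i-\alpha_s-1},
\]
and you propose to get it from symmetry, unimodality, and the relative location of the two peaks. That is not enough: unimodality constrains the signs of increments, not their sizes, so a symmetric unimodal $(a_i)$ can have an arbitrarily flat step exactly where the shifted sequence $(b_{i-\alpha_s-1})$ has a large jump, producing a drop in $r$ even though both sequences are on their increasing halves and $r_i\ge 0$ everywhere. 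The tools you invoke (Lemma~\ref{lem:left}, Proposition~\ref{prop:almint}, Theorem~\ref{thm:circular}) only compare entries \emph{within} a single circular rank sequence (typically at reflected indices, which is exactly what \eqref{eq:ineqA} needs); they give no comparison between consecutive increments of two \emph{different} sequences, and no further structural relation between $\overline{R_T}(\alpha;q)$ and $\overline{R}(\delta;q)$ is established that would supply one. So the "the $a$-slope dominates" claim is unsupported, and the difference identities inherited from the \eqref{eq:ineqA} argument are structurally ill-suited to proving monotonicity: subtraction is the wrong operation here.

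The paper instead proves \eqref{eq:ineqB} by a positive decomposition: partition the ideals of $P^{(n+1)}(\alpha)$ according to whether they contain the single element $x_{n+1}$ (for the increasing half) or $x_{n-\alpha_s+1}$ (for the decreasing half). This writes
\[
R^{(n+1)}(\alpha;q)=q^{d_1+1}R(\beta_1;q)+R(\beta_2;q),
\qquad
R^{(n+1)}(\alpha;q)=R(\gamma_1;q)+qR(\gamma_2;q),
\]
where $\beta_1,\beta_2,\gamma_1,\gamma_2$ are \emph{ordinary} fence posets, so Theorem~\ref{thm:rankuni} gives monotonicity of each summand on the relevant half, and a sum of sequences increasing (resp.\ decreasing) on an initial (resp.\ terminal) segment is again monotone there. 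If you want to salvage your plan, you would need either such a positive decomposition or a genuine coefficientwise slope-domination lemma relating $\overline{R_T}(\alpha;q)$ to $q^{\alpha_s+1}\overline{R}(\delta;q)$ (and likewise for $\overline{R_B}$, $\overline{R}(\delta')$), which is not currently available. Your secondary concern about the exceptional circular posets $(a,1,a,1)$, $(1,a,1,a)$ is already absorbed by Lemma~\ref{lem:left} and is not the real obstruction.
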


\begin{proof}
Let \( \alpha = (\alpha_1, \alpha_2, \ldots, \alpha_s) \) be a composition of \( n \). Let \( r(\alpha) = (r_0, \ldots, r_{n+1}) \) denote the rank sequence of the rank polynomial \( R^{(n+1)}(\alpha; q) \).

We first treat the case when \( s \) is odd. The ideals of the singly notched poset \( P^{(n+1)}(\alpha) \) can be partitioned into those that contain \( v_{n+1} \) and those that do not. Let \( \beta_1 \) be the composition obtained by removing \( v_{n+1} \) along with all elements below it, and let \( \beta_2 \) be obtained by removing only \( v_{n+1} \). Then we have the decomposition
\[
R^{(n+1)}(\alpha; q) = q^{d_1 + 1} R(\beta_1; q) + R(\beta_2; q),
\]
where \( d_1 \) is the number of elements lying below \( v_{n+1} \).
Let \( b_i \) and \( b_i' \) be the coefficients of \( R(\beta_1; q) \) and \( R(\beta_2; q) \), respectively. Then, we have
\[
R(\beta_1; q) = \sum_{i=0}^{n - d_1} b_i q^i, \quad R(\beta_2; q) = \sum_{i=0}^{n} b_i' q^i.
\]

By Theorem~\ref{thm:rankuni}, we have the following inequalities:
\begin{align*}
    &b_0 \le b_1 \le \dots \le b_{\lfloor (n - d_1)/2 \rfloor}, \\
&b_0' \le b_1' \le \dots \le b_{\lfloor n/2 \rfloor}'.
\end{align*}

Since \( q^{d_1 + 1} R(\beta_1; q) \) has increasing coefficients up to degree \( n/2 \) (including leading $d_{1} + 1$ zeros, actually up to \(\lfloor \frac{n - d_1}{2} \rfloor + d_1 + 1\)), the coefficients of their sum increase up to \( \lfloor n/2 \rfloor \), so
\[
r_0 \le r_1 \le \dots \le r_{\lfloor n/2 \rfloor}.
\]

Now consider the dual inequality. Again partition the ideals of \( P^{(n+1)}(\alpha) \) into those that contain the element \( v_{n - \alpha_s + 1} \) and those that do not. Let \( \gamma_1 \) be the composition obtained by removing \( v_{n - \alpha_s + 1} \) along with all elements above it, and let \( \gamma_2 \) be obtained by removing only \( v_{n - \alpha_s + 1} \). Then we have

\[
R^{(n+1)}(\alpha; q) = R(\gamma_1; q) + q R(\gamma_2; q).
\]

Let \( c_i \) and \( c_i' \) be the coefficients of \( R(\gamma_1; q) \) and \( R(\gamma_2; q) \), respectively. Then, we have
\[
R(\gamma_1; q) = \sum_{i=0}^{n - d_2} c_i q^i, \quad R(\gamma_2; q) = \sum_{i=0}^{n} c_i' q^i,
\]
where \( d_2 \) is the number of elements above \( v_{n - \alpha_s + 1} \). Applying Theorem~\ref{thm:rankuni} now yields
\begin{align*}
    &c_{\lceil (n - d_2)/2 \rceil} \ge c_{\lceil (n - d_2)/2 \rceil + 1} \ge \dots \ge c_{n - d_2}, \\
&c'_{\lceil n/2 \rceil} \ge c'_{\lceil n/2 \rceil + 1} \ge \dots \ge c'_n.
\end{align*}

Since \( R(\gamma_1; q) \) has decreasing coefficients from degree \( \lceil n/2 \rceil + 1 \) (including trailing $d_{2} + 1$ zeros), the coefficients of their sum decrease from \( \lceil n/2 \rceil + 1 \), so
\[
r_{\lceil n/2 \rceil + 1} \ge r_{\lceil n/2 \rceil + 2} \ge \dots \ge r_{n+1},
\]
which establishes the decreasing part of the sequence.

For even \( s \), the argument is entirely analogous. For the first inequality involving \( v_{n+1} \), replace \( v_{n+1} \) with \( v_{n - \alpha_s + 1} \); for the second inequality, reverse this replacement. In both cases, the rank sequence satisfies inequality~\textnormal{\eqref{eq:ineqB}}.
\end{proof}

\begin{theorem}\label{thm:RPFP}
    The rank polynomials of singly notched posets are almost interlacing.
\end{theorem}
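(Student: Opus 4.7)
The plan is to derive Theorem~\ref{thm:RPFP} essentially for free by assembling the two preceding lemmas together with the symmetry observation made at the start of the subsection. Recall that the paper has already recorded the equivalence: a sequence is almost interlacing if and only if it satisfies both \eqref{eq:ineqA} and \eqref{eq:ineqB}. The preceding (unnamed) lemma gives \eqref{eq:ineqA} for the rank sequence of \(P^{(n+1)}(\alpha)\), and Lemma~\ref{lem:sineqB} gives \eqref{eq:ineqB} for the same sequence. Combining these directly yields that the rank polynomial \(R^{(n+1)}(\alpha;q)\) is almost interlacing.

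It then remains to handle the other family of singly notched posets, namely \(P^{(1)}(\alpha)\). The plan is to invoke the reflection observation stated at the beginning of Section~4.1: reflecting the fence poset interchanges \(P^{(1)}(\alpha)\) and \(P^{(n+1)}(\alpha)\) (possibly after also reflecting the composition \(\alpha\) to its reverse), and since a poset and its reflection share the same lattice of order ideals up to complementation, they carry the same rank polynomial. Consequently the rank sequence of \(P^{(1)}(\alpha)\) is a rearrangement (in fact, reversal) of that of \(P^{(n+1)}(\alpha)\); but the conditions \eqref{eq:ineqA} and \eqref{eq:ineqB}, and hence the almost interlacing property, are preserved under reversal of a sequence.

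I do not foresee any genuine obstacle here, since all the structural work has been done in the two preceding lemmas; the only point worth stating carefully is the reflection step, so that the reader sees that the \(P^{(1)}\) case does not require a separate repetition of the delicate circular-fence-poset argument used in the proof of \eqref{eq:ineqA}. The write-up should therefore be short: cite the previous lemma and Lemma~\ref{lem:sineqB} to conclude the \(P^{(n+1)}\) case via the characterization of almost interlacing, and then deduce the \(P^{(1)}\) case by reflection symmetry of the rank polynomial.
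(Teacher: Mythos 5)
Your proposal is correct and takes essentially the same route as the paper: the paper likewise disposes of \(P^{(1)}(\alpha)\) via the reflection remark at the start of the subsection, and deduces the \(P^{(n+1)}(\alpha)\) case by combining the two preceding lemmas. The only difference is one of detail: the paper's written proof consists precisely of the step you delegate to the recorded equivalence, namely the even/odd case analysis showing that \eqref{eq:ineqA} together with \eqref{eq:ineqB} forces unimodality (in the odd case \(n=2k+1\) one needs \(r_k\le r_{k+1}\) and \(r_{k+2}\le r_{k+1}\) from \eqref{eq:ineqA} to bridge the gap left by \eqref{eq:ineqB}).
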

\begin{proof}
Lemma~\ref{lem:sineqA}, Lemma~\ref{lem:sineqB}, and Lemma~\ref{lem:ABau} implies the statement.
\end{proof}

The following corollary will be used in the next section.

\begin{cor}\label{cor:eqauni}
    Let $r(\alpha)=(r_0,r_1,\ldots,r_{n+1})$ be a rank sequence of a singly notched poset $P^{(n+1)}(\alpha)$. Then, if $i\le j$ and $i+j\le n-2$, then $r_i\le r_j$. Moreover, if $i\le j$ and $i+j\ge n+2$, then $r_i\ge r_j$.
\end{cor}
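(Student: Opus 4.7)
The plan is to derive the corollary as a near-immediate consequence of Theorem~\ref{thm:RPFP} combined with Proposition~\ref{prop:almint}, modulo a single edge case handled by hand. By Theorem~\ref{thm:RPFP}, the rank sequence $r(\alpha) = (r_0, \ldots, r_{n+1})$ of $P^{(n+1)}(\alpha)$ is almost interlacing, so both \eqref{eq:ineqA} and \eqref{eq:ineqB} are available.

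For the first inequality, I would observe that $i + j \le n - 2$ implies $i + j \le (n+1) - 2$, so Proposition~\ref{prop:almint}---applied to the almost interlacing sequence $(r_0, \ldots, r_{n+1})$ of maximum index $n+1$---yields $r_i \le r_j$ immediately. For the second inequality, when $i + j \ge n + 3 = (n+1) + 2$, the same proposition applies directly and gives $r_i \ge r_j$.

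The only step requiring explicit attention---and the only mild obstacle---is the boundary case $i + j = n + 2$, which the off-by-one shift between Proposition~\ref{prop:almint}'s thresholds (phrased for a sequence of maximum index $n$) and our sequence (maximum index $n+1$) leaves uncovered. Here $j = n+2-i$, and the constraint $j \le n+1$ forces $i \ge 1$. The second block of \eqref{eq:ineqA} for our sequence, which reads $r_{n+1} \le r_1$, $r_n \le r_2$, $\ldots$, supplies precisely $r_{n+2-i} \le r_i$, i.e., $r_j \le r_i$, for all $i$ in the needed range $1 \le i \le \lfloor (n+2)/2 \rfloor$; the extreme subcase $i = j$, which can only occur when $n$ is even, is trivial. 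This closes the gap and completes the argument.
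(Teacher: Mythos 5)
Your proof is correct and takes essentially the same route as the paper, whose entire argument for this corollary is the one-line remark that it follows directly from Proposition~\ref{prop:almint} applied to the almost interlacing sequence supplied by Theorem~\ref{thm:RPFP}. Your explicit treatment of the boundary case $i+j=n+2$ — which a literal application of Proposition~\ref{prop:almint} to a sequence of maximum index $n+1$ leaves uncovered, but which is exactly the inequality $r_{n+2-i}\le r_i$ coming from the second chain of \eqref{eq:ineqA} (and proved in the paper's lemma on \eqref{eq:ineqA} for singly notched posets) — is a correct and welcome tightening of the paper's terse justification.
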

\begin{proof}
    Directly follows from Proposition~\ref{prop:almint} and Theorem~\ref{thm:RPFP}.
\end{proof}

\subsection{Unimodality of Doubly Notched Posets}

The idea is very similar to the case of singly notched loop fence posets. Let the rank sequence of $P^{(1)(n+1)}(\alpha)$ as $(r_0,r_1,\ldots,r_{n+1})$. We will show both \eqref{eq:ineqA} and \eqref{eq:ineqB} hold for doubly notched case. The proof technique is almost identical to the proof of lemmas in the previous subsection.

\begin{lemma}\label{lem:doublyeqa}
    The rank sequence \( r(\alpha) = (r_{0}, r_{1}, \ldots, r_{n+1}) \) of the doubly notched poset $P^{(1)(n+1)}(\alpha)$ satisfies \eqref{eq:ineqA}.
\end{lemma}
\begin{proof}
    Let us consider when $\alpha_1\neq 0$ and $s$ is odd.

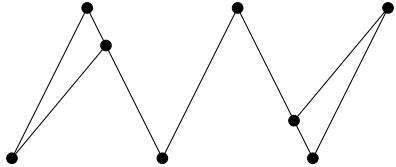
\begin{figure}[H]
    \centering
        \begin{tikzpicture}
        \global\coordinate (p1) at (1, 0);
        \global\coordinate (p2) at (2, 2);
        \global\coordinate (p2+) at (2.25, 1.5);
        \global\coordinate (p2m) at (2.5, 1);
        \global\coordinate (p3) at (3, 0);
        \global\coordinate (p3+) at (3.25, 0.5);
        \global\coordinate (p4-) at (3.75, 1.5);
        \global\coordinate (p4) at (4, 2);
        \global\coordinate (p4m) at (4.5, 1);
        \global\coordinate (p5-) at (4.75, 0.5);
        \global\coordinate (p5) at (5, 0);
        \global\coordinate (p6) at (6, 2);
        \global\coordinate (v_T) at (3, 3);
        \global\coordinate (v_T') at (3.5, 3.5);

            \draw[fill] (p1) circle (2pt);
            \draw[fill] (p2) circle (2pt);
            \draw[fill] (p2+) circle (2pt);
            \draw[fill] (p3) circle (2pt);
            \draw[fill] (p4) circle (2pt);
            \draw[fill] (p5-) circle (2pt);
            \draw[fill] (p5) circle (2pt);
            \draw[fill] (p6) circle (2pt);

            \draw (p1) -- (p2);
            \draw (p1) -- (p2+);
            \draw (p2) -- (p3);
            \draw (p3) -- (p4);
            \draw (p4) -- (p5);
            \draw (p5) -- (p6);
            \draw (p5-) -- (p6);
        \end{tikzpicture}    
    \caption{An example of $\alpha$ when $\alpha_1\neq 0$ and $s$ is odd}
    \label{fig:alphaexdoubly}
\end{figure}

Define \( P_T^{(1)(n+1)}(\alpha) \) to be the poset obtained by adding a new vertex \( v_T \) that lies above both \( v_{\alpha_1 + 1} \) and \( v_{n+1} \) in the fence poset associated with \( \alpha \). Let \( R_T^{(1)(n+1)}(\alpha; q) \) denote its rank polynomial. The ideals of \( P_T^{(1)(n+1)}(\alpha) \) can be divided into two types: those that contain \( v_T \) and those that do not.

Let \( \beta \) be the composition obtained by removing \( v_T \) and all vertices below it, and let \( d_1 \) be the number of vertices below \( v_T \) in the poset \( P_T^{(1)(n+1)}(\alpha) \). Then:
\[
R_T^{(1)(n+1)}(\alpha; q) = R^{(1)(n+1)}(\alpha; q) + q^{d_1 + 1} R(\beta; q).
\]

Next, define \( \overline{P_T^{(n+1)}}(\alpha) \) as the poset obtained by removing the relation \( v_{\alpha_1+2} \prec v_{\alpha_1 + 1} \) from \( P_T^{(1)(n+1)}(\alpha) \), and let \( \overline{R_T^{(n+1)}}(\alpha; q) \) be its rank polynomial.

Let \( \gamma^{(n+1)} \) be the poset formed by removing \( v_{\alpha_1 + 2} \) and all vertices above it, as well as removing \( v_{\alpha_1 + 1} \) and all vertices below it, from \( \overline{P_T^{(n+1)}}(\alpha) \). Then, the number of vertices below \( v_{\alpha_1 + 1} \) in \( \overline{P_T^{(n+1)}}(\alpha) \) is $\alpha_1$. Hence,
\[
\overline{R_T^{(n+1)}}(\alpha; q) = R_T^{(1)(n+1)}(\alpha; q) + q^{\alpha_1 + 1} R(\gamma^{(n+1)}; q),
\]
which implies:
\[
R^{(1)(n+1)}(\alpha; q) = \overline{R_T^{(n+1)}}(\alpha; q) - q^{d_1 + 1} R(\beta; q) - q^{\alpha_1 + 1} R(\gamma^{(n+1)}; q).
\]

Now we decompose \( \overline{P_T^{(n+1)}}(\alpha) \). Let \( \overline{P_T}(\alpha) \) be the circular fence poset obtained by removing the relation \( v_{n - \alpha_s} \prec v_{n+1} \) from \( \overline{P_T^{(n+1)}}(\alpha) \), and denote its rank polynomial by \( \overline{R_T}(\alpha; q) \). Let \( \gamma_1 \) be the poset formed by removing \( v_{n - \alpha_s} \) and all vertices above it, and \( v_{n+1} \) and all vertices below it, from \( \overline{P_T}(\alpha) \). Then, the number of vertices below \( v_{n+1} \) in \( \overline{P_T}(\alpha) \) equals to $\alpha_s$. Therefore,
\[
\overline{R_T}(\alpha; q) = \overline{R_T^{(n+1)}}(\alpha; q) + q^{\alpha_s + 1} R(\gamma_1; q).
\]

\begin{table}[H]
    \centering
    \begin{tabular}{ccc}
        \begin{tikzpicture}[scale=0.8, transform shape]
        \global\coordinate (p1) at (1, 0);
        \global\coordinate (p2) at (2, 2);
        \global\coordinate (p2+) at (2.25, 1.5);
        \global\coordinate (p2m) at (2.5, 1);
        \global\coordinate (p3) at (3, 0);
        \global\coordinate (p3+) at (3.25, 0.5);
        \global\coordinate (p4-) at (3.75, 1.5);
        \global\coordinate (p4) at (4, 2);
        \global\coordinate (p4m) at (4.5, 1);
        \global\coordinate (p5-) at (4.75, 0.5);
        \global\coordinate (p5) at (5, 0);
        \global\coordinate (p6) at (6, 2);
        \global\coordinate (v_T) at (3, 3);
        \global\coordinate (v_T') at (3.5, 3.5);

            \draw[fill] (p1) circle (2pt);
            \draw[fill] (p2) circle (2pt);
            \draw[fill] (p2+) circle (2pt);
            \draw[fill] (p3) circle (2pt);
            \draw[fill] (p4) circle (2pt);
            \draw[fill] (p5-) circle (2pt);
            \draw[fill] (p5) circle (2pt);
            \draw[fill] (p6) circle (2pt);
            \draw[fill] (v_T) node[above] {$v_T$} circle (2pt);

            \draw (p1) -- (p2);
            \draw (p1) -- (p2+);
            \draw (p2) -- (p3);
            \draw (p3) -- (p4);
            \draw (p4) -- (p5);
            \draw (p5) -- (p6);
            \draw (p5-) -- (p6);
            \draw (p2) -- (v_T);
            \draw (p6) -- (v_T);
        \end{tikzpicture} & 
                \begin{tikzpicture}[scale=0.8, transform shape]
            \draw[fill] (p1) circle (2pt);
            \draw[fill] (p2) circle (2pt);
            \draw[fill] (p2+) circle (2pt);
            \draw[fill] (p3) circle (2pt);
            \draw[fill] (p4) circle (2pt);
            \draw[fill] (p5-) circle (2pt);
            \draw[fill] (p5) circle (2pt);
            \draw[fill] (p6) circle (2pt);
            \draw[fill] (v_T) node[above] {$v_T$} circle (2pt);
            
            \draw (p1) -- (p2);
            \draw (p1) -- (p2+);
            \draw (p2+) -- (p3);
            \draw (p3) -- (p4);
            \draw (p4) -- (p5);
            \draw (p5) -- (p6);
            \draw (p5-) -- (p6);
            \draw (p2) -- (v_T);
            \draw (p6) -- (v_T);
        \end{tikzpicture} &
        \begin{tikzpicture}[scale=0.8, transform shape]
            \draw[fill] (p1) circle (2pt);
            \draw[fill] (p2) circle (2pt);
            \draw[fill] (p2+) circle (2pt);
            \draw[fill] (p3) circle (2pt);
            \draw[fill] (p4) circle (2pt);
            \draw[fill] (p5) circle (2pt);
            \draw[fill] (p6) circle (2pt);
            \draw[fill] (v_T) node[above] {$v_T$} circle (2pt);
            
            \draw (p1) -- (p2);
            \draw (p1) -- (p2+);
            \draw (p2+) -- (p3);
            \draw (p3) -- (p4);
            \draw (p4) -- (p5);
            \draw (p5) -- (p6);
            \draw (p2) -- (v_T);
            \draw (p6) -- (v_T);
        \end{tikzpicture}
        \\
        $P_T^{(1) (n+1)} (\alpha)$ & 
        $\overline{P^{(n+1)}_T}(\alpha)$ &
        $\overline{P_T}(\alpha)$
        \\
    \end{tabular}
    \caption{An example of $P_T^{(1)(n+1)}(\alpha)$, $\overline{P^{(n+1)}_T}(\alpha)$, and $\overline{P_T}(\alpha)$ for the instance $\alpha$ shown in Figure~\ref{fig:alphaexdoubly}}
    \label{tab:fence_posets}
\end{table}

We decompose \( \gamma^{(n+1)} \). Let \( \gamma_2 \) be the poset obtained by removing \( v_T \) and all vertices below it, and let \( d_2 \) be the number of such vertices. Define \( \gamma_3^{(n+1)} \) as the singly notched poset obtained from \( \gamma^{(n+1)} \) by removing only \( v_T \). Then:
\[
R(\gamma^{(n+1)}; q) = q^{d_2 + 1} R(\gamma_2; q) + R(\gamma_3^{(n+1)}; q).
\]

Hence, we get
\begin{eqnarray*}
    R^{(1)(n+1)}(\alpha; q) &= \overline{R_T}(\alpha; q) - q^{d_1 + 1} R(\beta; q) - q^{\alpha_s+1}R(\gamma_1;q)\\
    &- q^{\alpha_1+d_2 + 2}  R(\gamma_2; q) - q^{\alpha_1+1}R(\gamma_3^{(n+1)}; q).
\end{eqnarray*}

Then, we define the poset \( \delta \) as follows. Take the union of \( \beta \), \( \gamma_1 \), and \( \gamma_2 \), preserving all partial orders. One terminal vertex in this union is \( v_T \), and let \( v_P \), for some \( 1 \leq P \leq n - \alpha_s \), be another endpoint. Add a vertex \( v_{P+1} \), incorporating the relation between \( v_P \) and \( v_{P+1} \) as it appears in \( \alpha \). Add a relation \( v_{T} \prec v_{P+1} \) and relabel \(v_{P+1}\) as \( v_{\tilde{T}} \), resulting in the circular fence poset \( \delta \).

There are two types of ideals in \( \delta \):
\begin{itemize}
    \item Ideals that contain \( v_{\tilde{T}} \), which split into two subtypes:
        \begin{itemize}
            \item those that contain \( v_{\alpha_1 + 2} \), corresponding to \( R(\beta; q) \),
            \item and those that exclude \( v_{\alpha_1 + 2} \), corresponding to \( R(\gamma_2; q) \).
        \end{itemize}
    \item Ideals that do not contain \( v_{\tilde{T}} \), corresponding to \( R(\gamma_1; q) \).
\end{itemize}

\begin{figure}[H]
    \centering
    \begin{tikzpicture}
        \global\coordinate (p1) at (1, 0);
        \global\coordinate (p2) at (2, 2);
        \global\coordinate (p2+) at (2.25, 1.5);
        \global\coordinate (p3) at (3, 0);
        \global\coordinate (p4) at (4, 2);
        \global\coordinate (p4m) at (4.5, 1);
        \global\coordinate (p5-) at (4.75, 0.5);
        \global\coordinate (v_T) at (3, 3);

        \draw[fill] (p1) circle (2pt);
        \draw[fill] (p2) circle (2pt);
        \draw[fill] (p2+) circle (2pt);
        \draw[fill] (p3) circle (2pt);
        \draw[fill] (p4) circle (2pt);
        \draw[fill] (p2+) node[above right] {$v_{\alpha_1+2}$} circle (2pt);
        \draw[fill] (p4m) node[below left] {$v_{P}$} circle (2pt);
        \draw[fill] (p5-) node[below right] {$v_{P+1}$} circle (2pt);
        \draw[fill] (v_T) node[below right] {$v_T$} circle (2pt);
        
        \draw (p1) -- (p2);
        \draw (p1) -- (p2+);
        \draw (p2+) -- (p3);
        \draw (p3) -- (p4);
        \draw (p4) -- (p5-);
        \draw (p2) -- (v_T);

        \draw(0.875, -0.25) rectangle (4.625, 3.25);
    \end{tikzpicture}
    \caption{Constructing $\delta$ for $\alpha$ in Figure~\ref{fig:alphaexdoubly}}
    \label{fig:deltadoubly}
\end{figure}
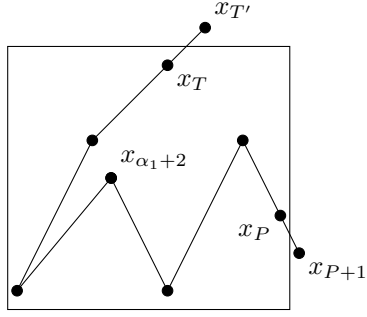

Thus, we obtain the relation:
\[
R^{(1)(n+1)}(\alpha; q) = \overline{R_T}(\alpha; q) - q^{\alpha_s + 1} \overline{R}(\delta; q) - q^{\alpha_1 + 1} R(\gamma_3^{(n+1)}; q).
\]

By Theorem~\ref{thm:circular}, \( \overline{R_T}(\alpha; q) \) is a symmetric polynomial, and by Theorem~\ref{thm:RPFP}, \( R(\gamma_3^{(n+1)}; q) \) is unimodal. Note that:
\[
\deg(\overline{R_T}(\alpha; q)) = n + 2, \quad \deg(\overline{R}(\delta; q)) = n - \alpha_s + 1, \quad \deg(R(\gamma_3^{(n+1)}; q)) = n - \alpha_1.
\]

Let their rank sequences be:
\[
(a_0, a_1, \ldots, a_{n+2}), \quad (b_0, b_1, \ldots, b_{n - \alpha_s + 1}), \quad (c_0, c_1, \ldots, c_{n - \alpha_1}).
\]

    Since \( \overline{R_T}(\alpha;q) \) is symmetric, we have:
    \[
    a_i = a_{n + 2 - i} \quad \text{for all } i.
    \]

    Moreover, since \( b_{n - \alpha_s + 1 - i} \geq 0 \) and \( c_{n - \alpha_1 + 1 - i} \geq 0 \) and from Lemma~\ref{lem:left} and Corollary~\ref{cor:eqauni}, it follows that:
    \[
    r_i = a_i - b_{i - \alpha_s - 1} - c_{i - \alpha_1 - 1} \geq a_{n + 2 - i} - b_{n - \alpha_s + 1 - i} - c_{n - \alpha_1 + 1 - i} = r_{n + 2 - i}
    \]
    for all \( i \leq \left\lfloor \frac{n+2}{2} \right\rfloor \), with the convention \( b_k = c_k = 0 \) for \( k < 0 \), and \( c_{n - \alpha_1 + 1} = 0 \).

    Hence, the rank sequence \( r_i \) satisfies:
    \[
    r_i \geq r_{n + 2 - i} \quad \text{for } i = 0,1, 2, \ldots, \left\lfloor \frac{n+2}{2} \right\rfloor.
    \]

    Similarly, let \( P_B^{(1)(n+1)}(\alpha) \) be the poset obtained by adding a new vertex \( v_B \) lying below both \( v_1 \) and \( v_{n-\alpha_s+1} \) in the poset \( P^{(1)(n+1)}(\alpha) \). Let \( R_B^{(1)(n+1)}(\alpha; q) \) denote its rank polynomial. 

    Let \( \beta' \) be the composition obtained from \( \alpha \) by removing \( v_B \) and all vertices above it.  Let \( d'_1 \) denote the number of vertices above \( v_B \). Then we have:
    \[
    R_B^{(1)(n+1)}(\alpha; q) = qR^{(1)(n+1)}(\alpha; q) +  R(\beta'; q).
    \]

    Next, let \( \overline{P_B^{(n+1)}}(\alpha) \) be the poset obtained by removing the relation \( v_{1} \prec v_{\alpha_1 + 2} \) from \( P_B^{(1)(n+1)}(\alpha) \), and let \( \overline{R_B^{(n+1)}}(\alpha; q) \) denote its rank polynomial.

    Let \( \gamma'^{(n+1)} \) be the poset obtained from \( \overline{P_B^{(n+1)}}(\alpha) \) by removing \( v_{\alpha_1+2} \) and all vertices below it, and removing \( v_{1} \) along with all vertices above it. Then, the number of vertices below \( v_{\alpha_1+2} \) in $\gamma'^{(n+1)}$ is $\alpha_2-1$. The ideals of \( \overline{P_B^{(n+1)}}(\alpha) \) then consist of:
    - those in \( P_B^{(1)(n+1)}(\alpha) \), and
    - additional ideals from \( \gamma'^{(n+1)} \), which correspond to including \( v_{\alpha_1+2}\) but not \( v_{1} \), weighted by \( q^{\alpha_2} \).

 By applying the same decomposition argument as before, we obtain:
\[\overline{R^{(n+1)}_B}(\alpha; q) = R_B^{(1)(n+1)}(\alpha; q) + q^{\alpha_2} R(\gamma'^{(n+1)}; q)\] 

Combining the above, we have:
\[
q R^{(n+1)}(\alpha; q) = \overline{R_B^{(n+1)}}(\alpha; q) - R(\beta'; q) - q^{\alpha_2} R(\gamma'^{(n+1)}; q).
\]

We decompose \( \overline{P_B^{(n+1)}}(\alpha) \). Let \( \overline{P_B}(\alpha) \) be the circular fence poset obtained by removing the relation \( v_{n-\alpha_s+1} \prec v_{n-\alpha_s} \) from \( \overline{P_B^{(n+1)}}(\alpha) \), and denote its rank polynomial by \( \overline{R_B}(\alpha; q) \). Let \( \gamma_1' \) be the poset formed by removing \( v_{n -\alpha_s+1} \) and all vertices above it, and \( v_{n-\alpha_s} \) and all vertices below it, from \( \overline{P_B}(\alpha) \). Let $d'_2$ be the number of vertices below $v_{n-\alpha_s}$. Then:
\[
\overline{R_B}(\alpha; q) = \overline{R_B^{(n+1)}}(\alpha; q) + q^{d'_2 + 1} R(\gamma'_1; q).
\]

We now decompose \( \gamma'^{(n+1)} \). Let \( \gamma'_2 \) be the poset obtained by removing \( v_B \) and all vertices above it. Then, the number of such vertices equals to $\alpha_{s-1}+\alpha_s+2$. Define \( \gamma_3'^{(n+1)} \) as the singly notched poset obtained from \( \gamma^{(n+1)} \) by removing only \( v_B \). Then:
\[
R(\gamma'^{(n+1)}; q) =  R(\gamma'_2; q) + qR(\gamma_3'^{(n+1)}; q).
\]

We define the poset \( \delta' \) as follows. Take the union of \( \beta' \), \( \gamma_1' \), and \( \gamma_2' \), preserving all partial orders. One terminal vertex in this union is \( v_B \), and let \( v_Q \), for some \( 1 \leq Q \leq n - \alpha_s \), be another endpoint. Add a vertex \( v_{Q+1} \), incorporating the relation between \( v_Q \) and \( v_{Q+1} \) as it appears in \( \alpha \). Add a relation \( v_{B} \succ v_{Q+1} \) and relabel $v_{Q+1}$ as \( v_{\tilde{B}} \), resulting in the circular fence poset \( \delta' \).

There are two types of ideals in \( \delta' \):
\begin{itemize}
    \item Ideals that do not contain \( v_{\tilde{B}} \), which split into two subtypes:
        \begin{itemize}
            \item those that do not contain \( v_{n-\alpha_s} \), corresponding to \( R(\beta'; q) \),
            \item and those that contain \( v_{\alpha_1 + 2} \), corresponding to \( R(\gamma'_2; q) \).
        \end{itemize}
    \item Ideals that contain \( v_{\tilde{B}} \), corresponding to \( R(\gamma'_1; q) \).
\end{itemize}

Thus, we obtain the relation:
\[
R^{(1)(n+1)}(\alpha; q) = \overline{R_B}(\alpha; q) - \overline{R}(\delta'; q) - q R(\gamma_3'^{(n+1)}; q).
\]

Now, observe that \( \deg(\overline{R_B}(\alpha; q)) = n + 2 \), \( \deg(\overline{R}(\delta'; q)) = n - \alpha_s + 1\), and \(\deg(qR(\gamma_3'^{(n+1)};q))=n-\alpha_1-\alpha_2-1\). 

Let their rank sequences be:
\[
(t_0, t_1, \ldots, t_{n+2}), \quad (u_0, u_1, \ldots, u_{n - \alpha_s + 1}), \quad (w_0, w_1, \ldots, w_{n - \alpha_1-\alpha_2-1}).
\]

    Since \( \overline{R_B}(\alpha;q) \) is symmetric, we have:
    \[
    t_i = t_{n + 2 - i} \quad \text{for all } i.
    \]

Using the previous argument with Lemma~\ref{lem:left} and Corollary~\ref{cor:eqauni}, the rank sequence \( r_i \) satisfies:
    \[
    r_i \leq r_{n  - i} \quad \text{for } i = 0,1, 2, \ldots, \left\lfloor \frac{n-1}{2} \right\rfloor.
    \]

\end{proof}

\begin{lemma}\label{lem:doublyeqb}
    The rank sequence \( r(\alpha) = (r_{0}, r_{1}, \ldots, r_{n+1}) \) of the doubly notched poset $P^{(1)(n+1)}(\alpha)$ satisfies \eqref{eq:ineqB}.
\end{lemma}
\begin{proof}
We first treat the case when \( \alpha_1\neq 0 \). The ideals of the doubly notched poset \( P^{(1)(n+1)}(\alpha) \) can be partitioned into those that contain \( v_{\alpha_1+2} \) and those that do not. Let \( \beta_1 \) be the singly notched poset obtained by removing \( v_{\alpha_1+2} \) along with all elements below it, and let \( \beta_2 \) be the singly notched poset obtained by removing only \( v_{\alpha_1+2} \). Then we have the decomposition
\[
R^{(1)(n+1)}(\alpha; q) = q^{d_1 + 1} R^{(n+1)}(\beta_1; q) + R^{(n+1)}(\beta_2; q),
\]
where \( d_1 \) is the number of elements lying below \( v_{\alpha_1+2} \).

Write
\[
R^{(n+1)}(\beta_1; q) = \sum_{i=0}^{n - d_1} b_i q^i, \quad R^{(n+1)}(\beta_2; q) = \sum_{i=0}^{n} b_i' q^i.
\]
By Lemma~\ref{lem:sineqB}, we have the following inequalities:
\begin{align*}
    &b_0 \le b_1 \le \dots \le b_{\lfloor (n - d_1)/2 \rfloor}, \\
    &b_0' \le b_1' \le \dots \le b_{\lfloor n/2 \rfloor}'.
\end{align*}

Since \( q^{d_1 + 1} R^{(n+1)}(\beta_1; q) \) begins at degree \( d_1 + 1 \), and \( R(\beta_2; q) \) starts at degree zero, the coefficients of their sum increase up to \( \lfloor n/2 \rfloor \), so
\[
r_0 \le r_1 \le \dots \le r_{\lfloor n/2 \rfloor}.
\]

Now consider the other inequality. Again, partition the ideals of \( P^{(1)(n+1)}(\alpha) \) into those that contain the element \( v_{1} \) and those that do not. Let \( \gamma_1 \) be the singly notched poset obtained by removing \( v_{1} \) along with all elements above it, and let \( \gamma_2 \) be obtained by removing only \( v_{1} \). Then we have
\[
R^{(1)(n+1)}(\alpha; q) = R^{(n+1)}(\gamma_1; q) + q R^{(n+1)}(\gamma_2; q).
\]

Write
\[
R^{(n+1)}(\gamma_1; q) = \sum_{i=0}^{n - d_2} c_i q^i, \quad R^{(n+1)}(\gamma_2; q) = \sum_{i=0}^{n} c_i' q^i,
\]
where \( d_2 \) is the number of elements above \( v_{n - \alpha_s + 1} \). Then, by Lemma~\ref{lem:sineqB},
\begin{align*}
    &c_{\lceil (n - d_2)/2 \rceil} \ge c_{\lceil (n - d_2)/2 \rceil + 1} \ge \dots \ge c_{n - d_2}, \\
    &c'_{\lceil n/2 \rceil} \ge c'_{\lceil n/2 \rceil + 1} \ge \dots \ge c'_n.
\end{align*}

Since \( q R(\gamma_2; q) \) starts at degree one, and the coefficients from both terms overlap starting around \( \lceil n/2 \rceil + 1 \), we obtain
\[
r_{\lceil n/2 \rceil + 1} \ge r_{\lceil n/2 \rceil + 2} \ge \dots \ge r_{n+1},
\]
which establishes the decreasing part of the sequence.

For \( \alpha_1=0 \) even, the argument is entirely analogous by changing $v_{\alpha_1+2}$ to $v_1$ for the first inequality and changing \( v_{1} \) to \( v_{\alpha_2+2} \) for the second inequality; In both cases, the rank sequence satisfies inequality~\textnormal{\eqref{eq:ineqB}}.
\end{proof}
\begin{theorem}
    The rank polynomials of doubly notched posets are almost interlacing
\end{theorem}

\begin{proof}
Lemma~\ref{lem:doublyeqa}, Lemma~\ref{lem:doublyeqb}, and Lemma~\ref{lem:ABau} implies the statement.
\end{proof}

We rephrase above theorems in cluster algebra language as follows.

\begin{theorem}
    The rank polynomial \( R(P_\gamma; q) \) is almost interlacing when \( \gamma \) is an arc in (un)punctured surface.
\end{theorem}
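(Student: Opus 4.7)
The plan is to assemble the three cases (plain, singly notched, doubly notched) that together exhaust all tagged arcs on a marked surface, and invoke the corresponding structural result already established for each case. The construction in Section~\ref{sec:combi} shows that the poset $P_\gamma$ associated with a tagged arc $\gamma$ depends on the tagging pattern at its endpoints in a controlled way: if both endpoints are plain, then $P_\gamma$ is an ordinary fence poset $P(\alpha)$; if exactly one endpoint is notched, $P_\gamma$ is a singly notched fence poset $P^{(1)}(\alpha)$ or $P^{(n+1)}(\alpha)$ depending on which end carries the notch; and if both endpoints are notched, $P_\gamma$ is a doubly notched fence poset $P^{(1)(n+1)}(\alpha)$. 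The first step is simply to record this dictionary explicitly, which follows from the definitions immediately preceding Theorem~\ref{Thm:PMPoset} together with the observation right after the definition of notched fence posets that the additional order relations modeling the hook at a notched puncture are exactly those appearing in $P^{(1)}(\alpha)$ and $P^{(n+1)}(\alpha)$.

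Once the dictionary is in place, the theorem reduces to a case analysis. In the plain case, the almost interlacing property is precisely Theorem~\ref{thm:rankuni2}, which is the restatement of Theorem~\ref{thm:rankuni} in the language of arcs. In the singly notched case, reflection of the poset shows that $P^{(1)}(\alpha)$ and $P^{(n+1)}(\alpha)$ share the same rank polynomial, so Theorem~\ref{thm:RPFP} supplies the conclusion. In the doubly notched case, the previous theorem (proved via Lemma~\ref{lem:doublyeqa} and Lemma~\ref{lem:doublyeqb}) directly asserts that the rank sequence of $R^{(1)(n+1)}(\alpha;q)$ is almost interlacing. In an unpunctured surface only plain arcs occur, so the plain case already suffices there; in a punctured surface all three cases are genuinely present, and together they cover every tagged arc.

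There is no real obstacle beyond bookkeeping: every ingredient has already been proven. The only subtlety worth spelling out is that, for singly notched arcs, the notch can sit at either endpoint of $\gamma$, which determines whether $P_\gamma \cong P^{(1)}(\alpha)$ or $P_\gamma \cong P^{(n+1)}(\alpha)$; since an order-reversing bijection of a poset preserves its rank polynomial, both yield the same polynomial and the same almost interlacing conclusion. With this remark the proof is complete in a single short paragraph referencing Theorem~\ref{thm:rankuni2}, Theorem~\ref{thm:RPFP}, and the preceding doubly notched theorem.
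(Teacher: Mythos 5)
Your proposal is correct and matches the paper's approach: the theorem is presented there as a direct rephrasing of the preceding results, combining Theorem~\ref{thm:rankuni2} for plain arcs, Theorem~\ref{thm:RPFP} for singly notched arcs (via the same reflection remark identifying $P^{(1)}(\alpha)$ with $P^{(n+1)}(\alpha)$), and the doubly notched theorem, through the stated correspondence between notched fence posets and loop fence posets of notched arcs. No gaps.
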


\section{Single Lamination}\label{sec:singlelam}

Recall that when the lamination consists of a single curve, we refer to it as a single lamination. In this case, the corresponding coefficient is not a principal coefficient, and there is no natural poset interpretation. Therefore, we introduce a new terminology for the associated polynomial.  

\subsection{Unimodality and Log-concavity}

In this section, we do not apply elementary laminations. Hence, the $y_i$ variables do not represent principal coefficients. Nonetheless, we maintain the notation $y_i$ to represent the shear coordinates associated with a single lamination.

\begin{definition}
    For the cluster expansion $x_\gamma$, we define the \emph{$c$-polynomial} $c_{x_\gamma}(q)$ by setting $x_i = 1$ for all $i$ and identifying all $y_i$'s with $q$, i.e.,
    \[
    c_{x_\gamma}(q) = x_\gamma(1,\ldots,1,q,\ldots,q).
    \]
\end{definition}

Note that the rank polynomial introduced in the previous sections is also a $c$-polynomial. i.e. rank polynomials are special cases of $c$-polynomials.

\begin{example}
Let's revisit Example~\ref{ex:single}. Note that the exchange matrix is given in Example~\ref{ex:single}. 
\begin{figure}[H]
    \centering
    \begin{tikzpicture}
    \draw[line width=0.5mm] (0,0) to (1.4,0);
    \draw[line width=0.5mm] (1.4,0) to (2.4,1);
    \draw[line width=0.5mm] (2.4,2.4) to (2.4,1);
    \draw[line width=0.5mm] (2.4,2.4) to (1.4,3.4);
    \draw[line width=0.5mm] (0,3.4) to (1.4,3.4);
    \draw[line width=0.5mm] (0,3.4) to (-1,2.4);
    \draw[line width=0.5mm] (-1,1) to (0,0);
    \draw[line width=0.5mm] (-1,1) to (-1,2.4);

    \draw[line width=0.5mm] (1.4,0) to (2.4,2.4);
    \draw[line width=0.5mm] (1.4,3.4) to (1.4,0);
    \draw[line width=0.5mm] (1.4,3.4) to (0,0);
    \draw[line width=0.5mm] (0,3.4) to (0,0);
    \draw[line width=0.5mm] (-1,2.4) to (0,0);

    \node[scale=0.8] at (-0.7,1.3) {$1$};
    \node[scale=0.8] at (-0.2,1.5) {$2$};
    \node[scale=0.8] at (0.7,1.3) {$3$};
    \node[scale=0.8] at (1.6,2.3) {$4$};
    \node[scale=0.8] at (2.1,1.3) {$5$};

    \draw[line width=0.2mm, out=30,in=200,looseness=1, red] (-1,1.7) to (2.4,1.7);

    \node[scale=0.7, red] at (0.4,2) {$L$};
    \draw[line width=0.5mm, blue] (-1,1) to (2.4,1);
    \node[blue] at (1,0.7) {$\gamma$};

    \filldraw[black] (0,0) circle (2pt);
    \filldraw[black] (1.4,0) circle (2pt);
    \filldraw[black] (2.4,1) circle (2pt);
    \filldraw[black] (2.4,2.4) circle (2pt);
    \filldraw[black] (1.4,3.4) circle (2pt);
    \filldraw[black] (0,3.4) circle (2pt);
    \filldraw[black] (-1,2.4) circle (2pt);
    \filldraw[black] (-1,1) circle (2pt);
    \end{tikzpicture}
    \caption{Triangulation on Example~\ref{ex:single} with an arc $\gamma$ added}
\end{figure}

To compute the $c$-polynomial, we apply a sequence of flips (mutations) to the initial triangulation. For the arc $\gamma$, the resulting $c$-polynomial is given by:
\[
c_{x_{\gamma}}(q) = 2q^2 + 2q + 2.
\]
\end{example}

\begin{example}

Figure~\ref{fig:excpoly} shows a triangulation $T$ of an octagon with a single lamination $L$, where both triangulation and lamination being different from Example~\ref{ex:single}.
    \begin{figure}[H]
    \centering
\begin{tikzpicture}[scale=0.6, transform shape]
    \def\n{8}
    \def\radius{3}

    \foreach \i in {1,...,\n} {
        \pgfmathsetmacro{\angle}{360/\n * \i + 50}
        \coordinate (V\i) at (\angle:\radius);
    }

    \foreach \i in {1,...,\n} {
        \pgfmathsetmacro{\nexti}{mod(\i, \n) + 1}
        \draw[ultra thick] (V\i) -- (V\nexti);
    }

    \draw[thick] (V3) -- (V1) node[pos=0.4, yshift=-1pt, right] {$1$};
    \draw[thick] (V3) -- (V8) node[pos=0.35, yshift=-4pt, right] {$2$};
    \draw[thick] (V4) -- (V8) node[pos=0.4, yshift=-2pt, right] {$3$};
    \draw[thick] (V5) -- (V8) node[pos=0.4, right] {$4$};
    \draw[thick] (V6) -- (V8) node[pos=0.4, right] {$5$};
    
    \draw[line width=0.5mm, blue] (V2) -- (V7) node[pos=0.4, above right] {$\gamma$};

    \coordinate (M1) at ($0.5*(V1) + 0.5*(V2)$);
    \coordinate (M2) at ($0.5*(V5) + 0.5*(V6)$);

    \draw[red, looseness=1] (M1) .. controls +(up:-1.5) and +(down:-1.5) .. (M2) node[pos=0.65, xshift=-2pt, below] {$L$};
\end{tikzpicture}

    \caption{Triangulation of a polygon with a single lamination}
    \label{fig:excpoly}
\end{figure}

Here, we denote the cluster variable associated with each arc $i$ as $x_i$. The extended exchange matrix is given by:
\[
\begin{bmatrix}
    0 & -1 & 0 & 0 & 0 \\
    1 & 0 & 1 & 0 & 0 \\
    0 & -1 & 0 & 1 & 0 \\
    0 & 0 & -1 & 0 & 1 \\
    0 & 0 & 0 & -1 & 0 \\ 
    -1 & 1 & 0 & -1 & 0
\end{bmatrix},
\]
where the final row corresponds to the shear coordinates $(b_\tau(T,L))_{\tau \in T}$. For the arc $\gamma$, the corresponding $c$-polynomial is calculated as:
\[
c_{x_{\gamma}}(q) = 2q^3 + 4q^2 + 2q + 1.
\]
\end{example}

\begin{theorem}\label{thm:uni_single}
     If the lamination $L$ is a single lamination and $(S,M)$ is an unpunctured surface, then $c_{x_\gamma}(q)$ is unimodal.
\end{theorem}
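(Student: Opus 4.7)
The proof strategy is to translate $c_{x_\gamma}(q)$ into a weighted generating function on the fence poset $P_\gamma$ and then to deduce unimodality from the already-established unimodality of rank polynomials. Combining Proposition~\ref{prop:IDEAL} with the separation formula of~\cite{FZ2007} gives
\[
c_{x_\gamma}(q)=\sum_{I \trianglelefteq P_\gamma} q^{\,w(I)-m},\qquad w(I)=\sum_{k\in I}b_{\tau_{i_k}}(T,L),\quad m=\min_I w(I),
\]
where $i_k$ records which arc of $T$ sits at position $k$ of the fence poset and $b_\tau(T,L)$ is the shear coordinate of the single lamination $L$ at the arc $\tau$. When all $b_{\tau_i}=1$ this recovers the ordinary rank polynomial already handled by Theorem~\ref{thm:rankuni2}.

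Next I would reduce to the case of an elementary lamination. On an unpunctured surface, a single lamination $L$ is either a curve between two unmarked boundary points or an essential closed curve. In the first case, a sequence of flips produces a triangulation $T'$ for which $L=L_{\tau}$ for some $\tau\in T'$; the shear coordinates then take values in $\{-1,0,1\}$, with $|b_\tau|=1$ and $b_{\tau'}=0$ for all $\tau'\ne\tau$. Consequently the specialization collapses to
\[
c_{x_\gamma}(q)=\sum_{I \trianglelefteq P_\gamma} q^{|I\cap S_\tau|},
\]
where $S_\tau\subseteq P_\gamma$ is the set of positions labelled by $\tau$. The closed-curve case is treated analogously, either by a band-graph refinement of the snake graph or by flipping $T$ until $L$ becomes isotopic to an elementary lamination of a newly introduced arc crossed exactly once by $\gamma$.

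The core combinatorial step is to prove unimodality of $\sum_I q^{|I\cap S_\tau|}$. The key structural observation is that, since an unpunctured surface has no self-folded triangles, the elements of $S_\tau$ are pairwise non-adjacent in the linear order of $P_\gamma$, and their relative positions within the zig-zag structure of the fence are dictated by the two triangles flanking $\tau$. Splitting $P_\gamma$ at the elements of $S_\tau$ yields a collection of standard sub-fences, and summing over subsets $S\subseteq S_\tau$ (where $S$ records which elements of $S_\tau$ an ideal contains) produces a decomposition
\[
c_{x_\gamma}(q)=\sum_{S\subseteq S_\tau} q^{|S|}\prod_{j} R(\beta_j^{(S)};q),
\]
in which each $\beta_j^{(S)}$ is a composition describing one of the sub-fences cut out by $S$. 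Every factor $R(\beta_j^{(S)};q)$ is almost interlacing by Theorem~\ref{thm:rankuni}.

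The main obstacle is that a sum of shifted products of unimodal polynomials need not be unimodal, so establishing unimodality of $c_{x_\gamma}(q)$ requires delicate bookkeeping. I would follow the template of Lemma~\ref{lem:sineqB} and Lemma~\ref{lem:doublyeqb}: partition the order ideals according to whether they contain a distinguished element of $S_\tau$, obtain a pair of shifted rank polynomials that interact through the symmetry of an auxiliary circular fence poset, and then apply Corollary~\ref{cor:eqauni} together with Theorem~\ref{thm:rankuni} to pair off coefficients symmetrically around the center. Iterating this over all elements of $S_\tau$ would yield both inequalities~\eqref{eq:ineqA} and~\eqref{eq:ineqB}, and hence unimodality, though the alternating cancellation arguments used in Section~\ref{sec:uniloop} must now be extended from the one- and two-notch cases to the present setting with arbitrarily many crossings of $\gamma$ with $\tau$.
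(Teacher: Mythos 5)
Your proposal has genuine gaps, and its final goal is in fact contradicted by the paper itself. First, the reduction to an elementary lamination by flipping to a new triangulation $T'$ is not legitimate: the statement of Theorem~\ref{thm:uni_single} concerns the expansion of $x_\gamma$ in a \emph{fixed, arbitrary} seed, i.e.\ a fixed triangulation $T$ together with the coefficients cut out by $L$ via shear coordinates. Flipping $T$ changes both the fence poset $P_\gamma$ and the polynomial $c_{x_\gamma}(q)$; unimodality with respect to one seed does not transfer to another, so proving the statement for a convenient $T'$ proves nothing about the given $T$. The closed-curve case is worse: an essential closed curve is never an elementary lamination of any arc, and with respect to any triangulation its shear coordinates are generically supported on several arcs with mixed signs, so the collapse to $\sum_I q^{|I\cap S_\tau|}$ is unavailable there. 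Second, even in the genuinely elementary situation the ``core combinatorial step'' is only a plan: the statistic $|I\cap S_\tau|$ is not the rank statistic, so Theorem~\ref{thm:rankuni}, Theorem~\ref{thm:circular} and Corollary~\ref{cor:eqauni} do not apply to it directly, and the proposed extension of the Section~\ref{sec:uniloop} cancellation arguments is left unproved. Most concretely, you assert that iterating the decomposition would yield both \eqref{eq:ineqA} and \eqref{eq:ineqB}; but the paper exhibits a single lamination on an unpunctured surface with $c_{x_\gamma}(q)=q^2+6q+7$ (Figure~\ref{fig:ineqasin}), for which \eqref{eq:ineqA} fails. So any strategy whose endpoint is almost interlacing for single laminations cannot succeed, and only unimodality can be true.

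For comparison, the paper's proof does not pass through posets at all. It proceeds by induction along the crossing sequence of $\gamma$ with $T$, considering the $c$-polynomials of the partial arcs and showing that they satisfy recurrences of the form $c_{x_{k+1}}=q\,c_{x_k}+c_{x_{k-1}}$, $c_{x_k}=c_{x_{k-1}}+c_{x_h}$ (and the finitely many variants dictated by how the single lamination meets the local configuration of $\tau_{k-1},\tau_k,\tau_{k+1}$). The induction hypothesis tracks not just unimodality but the location of the ``$2$-peak'' (the adjacent pair of maximal coefficients) of $c_{x_{k-1}}$, $c_{x_h}$, $c_{x_k}$, and the recurrence is used to show the peak of $c_{x_{k+1}}$ moves by at most one position. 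That peak-tracking device is exactly what replaces the symmetry/almost-interlacing machinery, which, as the counterexample shows, is not available in this setting.
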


To prove Theorem~\ref{thm:uni_single}, we first establish the combinatorial framework for the arc $\gamma$. We denote the starting endpoint of $\gamma$ as $s(\gamma)$. Let $\{\tau_1, \tau_2, \dots, \tau_j\}$ be the sequence of arcs in the initial triangulation $T$ intersected by $\gamma$, ordered by intersection starting from $s(\gamma)$. 

There are two primary geometric configurations based on the local behavior of these intersections: The first case occurs when a subsequence of arcs $(\tau_{i-1}, \tau_i, \tau_{i+1})$ forms a zigzag pattern (either an $S$-shape or a $Z$-shape). The second case encompasses all remaining configurations where this zigzag structure is absent. These scenarios are illustrated in the figure below, with the $Z$ configurations on the left and the non-zigzag cases on the right. For any $k$, let $\gamma_k$ denote the arc originating at $s(\gamma)$ that consecutively intersects $\tau_1$ through $\tau_k$.

\begin{center}
\begin{tabular}{ccc}\label{table:two}
\begin{tikzpicture}[scale=0.45, transform shape]
    \draw (2,-5) to (3,0);
    \draw (2,-5) to (1,3);
    \draw (3,0) to (1,3);
    \draw (2,-5) to (-1,3);
    \draw (1,3) to (-1,3);
    \draw (-1,-6) to (-1,3);
    \draw (-1,-6) to (2,-5);

    \draw[blue, thick] (-8,0) to (2,-5);
    \draw[blue, thick] (-8,0) to (1,3);
    \draw[blue, thick] (-8,0) to (-1,3);    
    
    \node[scale=2, fill=white] at (0.5,-1) {$\tau_k$};
    \node[scale=2, fill=white] at (-1.5,0) {$\tau_{k-1}$};
    \node[scale=2, fill=white] at (1.6,0) {$\tau_{k+1}$};

    \node[scale=2, blue, fill=white] at (-4,2) {$\gamma_h$};
    \node[scale=2, blue, fill=white] at (-3,1.5) {$\gamma_k$};
    \node[scale=2, blue, fill=white] at (-3,-2.5) {$\gamma_{k-1}$};

    \node[scale=2] at (-8,-1) {$s(\gamma)$};

    \draw[fill=black] (2,-5) circle [radius=2pt];
    \draw[fill=black] (3,0) circle [radius=2pt];
    \draw[fill=black] (1,3) circle [radius=2pt];
    \draw[fill=black] (-1,3) circle [radius=2pt];
    \draw[fill=black] (-1,-6) circle [radius=2pt];
    \draw[fill=black] (-8,0) circle [radius=2pt];

\end{tikzpicture}
&
\qquad \qquad
&
\begin{tikzpicture}[scale=0.4, transform shape]
    \draw (2,-5) to (3,0);
    \draw (2,-5) to (1,3);
    \draw (3,0) to (1,3);
    \draw (2,-5) to (-1,4);
    \draw (1,3) to (-1,4);
    \draw (-4,5) to (2,-5);
    \draw (-4,5) to (-1,4);
    \draw (-1,-6) to (2,-5);
    \draw (-8,4) to (-4,5);
    \draw (-8,4) to (2,-5);
    \draw (-8,4) to (-1,-6);

    \draw[blue, thick] (-10,0) to (2,-5);
    \draw[blue, thick] (-10,0) to (1,3);
    \draw[blue, thick] (-10,0) to (-1,4);    

    \node[scale=1.5, fill=white] at (-2.7,0.5) {$\iddots$};

    \node[scale=2, fill=white] at (0.5,-1) {$\tau_k$};
    \node[scale=2, fill=white] at (-1.2,0) {$\tau_{k-1}$};
    \node[scale=2, fill=white] at (1.6,0) {$\tau_{k+1}$};
    \node[scale=2, fill=white] at (-2.3,-1) {$\tau_{h+1}$};
    \node[scale=2, fill=white] at (-4.5,-1) {$\tau_h$};

    \node[scale=2] at (-10,-1) {$s(\gamma)$};

    \node[scale=2, blue, fill=white] at (-4,2.7) {$\gamma_{k-1}$};
    \node[scale=2, blue, fill=white] at (-3,2) {$\gamma_k$};

    \node[scale=2, blue, fill=white] at (-3,-3) {$\gamma_{h}$};

    \draw[fill=black] (2,-5) circle [radius=2pt];
    \draw[fill=black] (3,0) circle [radius=2pt];
    \draw[fill=black] (1,3) circle [radius=2pt];
    \draw[fill=black] (-1,4) circle [radius=2pt];
    \draw[fill=black] (-1,-6) circle [radius=2pt];
    \draw[fill=black] (-4,5) circle [radius=2pt];
    \draw[fill=black] (-8,4) circle [radius=2pt];
    \draw[fill=black] (-4,5) circle [radius=2pt];
    \draw[fill=black] (-10,0) circle [radius=2pt];

\end{tikzpicture}
\end{tabular}
\end{center}

Let $A_k$ be the exchange matrix of the flip of $\gamma_k$. Then, for $i<k$, $(A_k)_{ik}$ is nonzero for two $i$. One of the two $i$'s is $k-1$, and the other one is denoted by $h(k)$.

For the first case, depending on the lamination, we identify six cases for \( x_k \) and \( x_{k+1} \):
\begin{itemize}
    \item \( c_{x_{k+1}} = qc_{x_k} + c_{x_{k-1}} \) and \( c_{x_k} = c_{x_{k-1}} + c_{x_h} \)
    \item \( c_{x_{k+1}} = c_{x_k} + qc_{x_{k-1}} \) and \( c_{x_k} = qc_{x_{k-1}} + c_{x_h} \)
    \item \( c_{x_{k+1}} = c_{x_k} + qc_{x_{k-1}} \) and \( c_{x_k} = c_{x_{k-1}} + c_{x_h} \)
    \item \( c_{x_{k+1}} = c_{x_k} + c_{x_{k-1}} \) and \( c_{x_k} = qc_{x_{k-1}} + c_{x_h} \)
    \item \( c_{x_{k+1}} = c_{x_k} + c_{x_{k-1}} \) and \( c_{x_k} = c_{x_{k-1}} + qc_{x_h} \)
    \item \( c_{x_{k+1}} = c_{x_k} + c_{x_{k-1}} \) and \( c_{x_k} = c_{x_{k-1}} + c_{x_h} \)
\end{itemize}

\begin{table}[H]
\captionsetup{width=0.9\textwidth}
    \centering
    \begin{tabular}{ccc}\label{table:firstcase}
    \begin{tikzpicture}[scale=0.3, transform shape]
        \draw (2,-5) to (3,0);
        \draw (2,-5) to (1,3);
        \draw (3,0) to (1,3);
        \draw (2,-5) to (-1,3);
        \draw (1,3) to (-1,3);
        \draw (-1,-6) to (-1,3);
        \draw (-1,-6) to (2,-5);

        \draw[blue, thick] (-8,0) to (2,-5);
        \draw[blue, thick] (-8,0) to (1,3);
        \draw[blue, thick] (-8,0) to (-1,3);    
        
        \node[scale=2, fill=white] at (0.5,-1) {$\tau_k$};
        \node[scale=2, fill=white] at (-1.5,0) {$\tau_{k-1}$};
        \node[scale=2, fill=white] at (1.6,0) {$\tau_{k+1}$};
    
        \node[scale=2, blue, fill=white] at (-4,2) {$\gamma_h$};
        \node[scale=2, blue, fill=white] at (-3,1.5) {$\gamma_k$};
        \node[scale=2, blue, fill=white] at (-3,-2.5) {$\gamma_{k-1}$};

        \draw[red] (2.7,0.4) to [out = -100, in = 90] (0,-5.7);
        \node[scale=2] at (-8,-1) {$s(\gamma)$};
    
        \draw[fill=black] (2,-5) circle [radius=2pt];
        \draw[fill=black] (3,0) circle [radius=2pt];
        \draw[fill=black] (1,3) circle [radius=2pt];
        \draw[fill=black] (-1,3) circle [radius=2pt];
        \draw[fill=black] (-1,-6) circle [radius=2pt];
        \draw[fill=black] (-8,0) circle [radius=2pt];
    
    \end{tikzpicture}
    &
    \begin{tikzpicture}[scale=0.3, transform shape]
        \draw (2,-5) to (3,0);
        \draw (2,-5) to (1,3);
        \draw (3,0) to (1,3);
        \draw (2,-5) to (-1,3);
        \draw (1,3) to (-1,3);
        \draw (-1,-6) to (-1,3);
        \draw (-1,-6) to (2,-5);

        \draw[blue, thick] (-8,0) to (2,-5);
        \draw[blue, thick] (-8,0) to (1,3);
        \draw[blue, thick] (-8,0) to (-1,3);    
        
        \node[scale=2, fill=white] at (0.5,-1) {$\tau_k$};
        \node[scale=2, fill=white] at (-1.5,0) {$\tau_{k-1}$};
        \node[scale=2, fill=white] at (1.6,0) {$\tau_{k+1}$};
    
        \node[scale=2, blue, fill=white] at (-4,2) {$\gamma_h$};
        \node[scale=2, blue, fill=white] at (-3,1.5) {$\gamma_k$};
        \node[scale=2, blue, fill=white] at (-3,-2.5) {$\gamma_{k-1}$};
    
        \node[scale=2] at (-8,-1) {$s(\gamma)$};

        \draw[red] (2.85,-0.6) to [out = 110, in = -60] (-2.2,3);

        \draw[fill=black] (2,-5) circle [radius=2pt];
        \draw[fill=black] (3,0) circle [radius=2pt];
        \draw[fill=black] (1,3) circle [radius=2pt];
        \draw[fill=black] (-1,3) circle [radius=2pt];
        \draw[fill=black] (-1,-6) circle [radius=2pt];
        \draw[fill=black] (-8,0) circle [radius=2pt];
    
    \end{tikzpicture}
    &
    \begin{tikzpicture}[scale=0.3, transform shape]
        \draw (2,-5) to (3,0);
        \draw (2,-5) to (1,3);
        \draw (3,0) to (1,3);
        \draw (2,-5) to (-1,3);
        \draw (1,3) to (-1,3);
        \draw (-1,-6) to (-1,3);
        \draw (-1,-6) to (2,-5);

        \draw[blue, thick] (-8,0) to (2,-5);
        \draw[blue, thick] (-8,0) to (1,3);
        \draw[blue, thick] (-8,0) to (-1,3);    
        
        \node[scale=2, fill=white] at (0.5,-1) {$\tau_k$};
        \node[scale=2, fill=white] at (-1.5,0) {$\tau_{k-1}$};
        \node[scale=2, fill=white] at (1.6,0) {$\tau_{k+1}$};
    
        \node[scale=2, blue, fill=white] at (-4,2) {$\gamma_h$};
        \node[scale=2, blue, fill=white] at (-3,1.5) {$\gamma_k$};
        \node[scale=2, blue, fill=white] at (-3,-2.5) {$\gamma_{k-1}$};
    
        \node[scale=2] at (-8,-1) {$s(\gamma)$};

        \draw[red] (2.85,-0.6) to [out = 120, in = -60] (-0.2,3);

        \draw[fill=black] (2,-5) circle [radius=2pt];
        \draw[fill=black] (3,0) circle [radius=2pt];
        \draw[fill=black] (1,3) circle [radius=2pt];
        \draw[fill=black] (-1,3) circle [radius=2pt];
        \draw[fill=black] (-1,-6) circle [radius=2pt];
        \draw[fill=black] (-8,0) circle [radius=2pt];
    
    \end{tikzpicture}
    \\
        \begin{tikzpicture}[scale=0.3, transform shape]
        \draw (2,-5) to (3,0);
        \draw (2,-5) to (1,3);
        \draw (3,0) to (1,3);
        \draw (2,-5) to (-1,3);
        \draw (1,3) to (-1,3);
        \draw (-1,-6) to (-1,3);
        \draw (-1,-6) to (2,-5);

        \draw[blue, thick] (-8,0) to (2,-5);
        \draw[blue, thick] (-8,0) to (1,3);
        \draw[blue, thick] (-8,0) to (-1,3);    
        
        \node[scale=2, fill=white] at (0.5,-1) {$\tau_k$};
        \node[scale=2, fill=white] at (-1.5,0) {$\tau_{k-1}$};
        \node[scale=2, fill=white] at (1.6,0) {$\tau_{k+1}$};
    
        \node[scale=2, blue, fill=white] at (-4,2) {$\gamma_h$};
        \node[scale=2, blue, fill=white] at (-3,1.5) {$\gamma_k$};
        \node[scale=2, blue, fill=white] at (-3,-2.5) {$\gamma_{k-1}$};
    
        \node[scale=2] at (-8,-1) {$s(\gamma)$};

        \draw[red] (2.7,0.4) to [out = 150, in = -60] (-2.2,3);

        \draw[fill=black] (2,-5) circle [radius=2pt];
        \draw[fill=black] (3,0) circle [radius=2pt];
        \draw[fill=black] (1,3) circle [radius=2pt];
        \draw[fill=black] (-1,3) circle [radius=2pt];
        \draw[fill=black] (-1,-6) circle [radius=2pt];
        \draw[fill=black] (-8,0) circle [radius=2pt];
    
    \end{tikzpicture}
    &
    \begin{tikzpicture}[scale=0.3, transform shape]
        \draw (2,-5) to (3,0);
        \draw (2,-5) to (1,3);
        \draw (3,0) to (1,3);
        \draw (2,-5) to (-1,3);
        \draw (1,3) to (-1,3);
        \draw (-1,-6) to (-1,3);
        \draw (-1,-6) to (2,-5);

        \draw[blue, thick] (-8,0) to (2,-5);
        \draw[blue, thick] (-8,0) to (1,3);
        \draw[blue, thick] (-8,0) to (-1,3);    
        
        \node[scale=2, fill=white] at (0.5,-1) {$\tau_k$};
        \node[scale=2, fill=white] at (-1.5,0) {$\tau_{k-1}$};
        \node[scale=2, fill=white] at (1.6,0) {$\tau_{k+1}$};
    
        \node[scale=2, blue, fill=white] at (-4,2) {$\gamma_h$};
        \node[scale=2, blue, fill=white] at (-3,1.5) {$\gamma_k$};
        \node[scale=2, blue, fill=white] at (-3,-2.5) {$\gamma_{k-1}$};
    
        \node[scale=2] at (-8,-1) {$s(\gamma)$};

        \draw[red] (0.5,3) to [out = -100, in = 90] (-0.5,-5.8);

        \draw[fill=black] (2,-5) circle [radius=2pt];
        \draw[fill=black] (3,0) circle [radius=2pt];
        \draw[fill=black] (1,3) circle [radius=2pt];
        \draw[fill=black] (-1,3) circle [radius=2pt];
        \draw[fill=black] (-1,-6) circle [radius=2pt];
        \draw[fill=black] (-8,0) circle [radius=2pt];
    \end{tikzpicture}
    &
    \begin{tikzpicture}[scale=0.3, transform shape]
        \draw (2,-5) to (3,0);
        \draw (2,-5) to (1,3);
        \draw (3,0) to (1,3);
        \draw (2,-5) to (-1,3);
        \draw (1,3) to (-1,3);
        \draw (-1,-6) to (-1,3);
        \draw (-1,-6) to (2,-5);

        \draw[blue, thick] (-8,0) to (2,-5);
        \draw[blue, thick] (-8,0) to (1,3);
        \draw[blue, thick] (-8,0) to (-1,3);    
        
        \node[scale=2, fill=white] at (0.5,-1) {$\tau_k$};
        \node[scale=2, fill=white] at (-1.5,0) {$\tau_{k-1}$};
        \node[scale=2, fill=white] at (1.6,0) {$\tau_{k+1}$};
    
        \node[scale=2, blue, fill=white] at (-4,2) {$\gamma_h$};
        \node[scale=2, blue, fill=white] at (-3,1.5) {$\gamma_k$};
        \node[scale=2, blue, fill=white] at (-3,-2.5) {$\gamma_{k-1}$};
    
        \node[scale=2] at (-8,-1) {$s(\gamma)$};

        \draw[red] (2.85,-0.6) to [out = -120, in = 60] (-1.5,-5);

        \draw[fill=black] (2,-5) circle [radius=2pt];
        \draw[fill=black] (3,0) circle [radius=2pt];
        \draw[fill=black] (1,3) circle [radius=2pt];
        \draw[fill=black] (-1,3) circle [radius=2pt];
        \draw[fill=black] (-1,-6) circle [radius=2pt];
        \draw[fill=black] (-8,0) circle [radius=2pt];
    \end{tikzpicture}
    \end{tabular}
    \caption{The first six cases of $x_k$ and $x_{k+1}$. Note that these diagrams are representative configurations and do not illustrate every possible scenario for each case; distinct laminations may yield the same case classification.}
    \label{tab:placeholder}
\end{table}

For the second geometric configuration, there are also six cases for \( x_k \) and \( x_{k+1} \).


\begin{proof}[Proof of Theorem~\ref{thm:uni_single}]
For the first case we prove $c_{x_{k+1}}=qc_{x_k}+c_{x_{k-1}}$ and $c_{x_k}=c_{x_{k-1}}+c_{x_h}$.

Obviously, all of the coefficients of $c_{x_{k}}$ are nonnegative. For $f(q)=a_{n} q^n+a_{n-1}q^{n-1}+\dots+a_0$, let $(i,j)$ be called $2$-peak of $f(q)$ when $\min{a_{i},a_{j}}\ge a_{k}$ for $k\neq i,j$. In fact, if $f(q)$ is unimodal, then the components of $2$-peak of $f(q)$ always has difference $1$. We prove the theorem by induction on $i$. We use the following induction hypothesis :
\begin{enumerate}
\item $c_{x_{k-1}}$, $c_{x_h}$, and $c_{x_k}$ are unimodal.
\item $c_{x_{k-1}}$ and $c_{x_{h}}$ have the same $2$-peak or $c_{x_{k-1}}$ has $(i+1,i+2)$ $2$-peak and $c_{x_{h}}$ has $(i,i+1)$ $2$-peak.
\end{enumerate}
The base cases $c_{x_0}$ and $c_{x_1}$ satisfy the inductive hypothesis, as their degrees are strictly less than $2$. Specifically, $c_{x_1}$ is restricted to the values $1$ or $q+1$. It follows that $c_{x_2}$ must take the form $q(q+1)+1$ or $(q+1)+1$ or $q+1$. Since both resulting polynomials are unimodal, the base case is well-founded.

Now, suppose that $c_{x_k}(q)$ satisfy induction hypothesis. Let
\begin{align*}
c_{x_h}(q)&=a_{m}q^m+\dots+a_0\\
c_{x_{k-1}}(q)&=b_{m'}q^{m'}+\dots+b_0.
\end{align*}
First, assume that $c_{x_h}$ and $c_{x_{k-1}}$ have $(i,i+1)$ as a $2$-peak. Then, we get $a_{i+1}\ge a_{i+2}\ge \dots$, $b_{i+1}\ge b_{i+2}\ge \dots$, $a_{i}\ge a_{i-1}\ge \dots$, $b_{i}\ge b_{i-1}\ge \dots$, and $c_{x_{k-1}}+c_{x_h}$ has $(i,i+1)$ $2$-peak. By induction hypothesis, $c_{x_k}(q)=c_{x_{k-1}}+c_{x_h}$ is unimodal which has $(i,i+1)$ $2$-peak. Now we consider $c_{x_{k+1}}(q)=c_{m''}q^{m''}+\dots+c_0$. Since
\begin{align*}
c_{x_{k+1}}(q)&=(q+1)c_{x_{k-1}}+qc_{x_h}\\
&=\dots+(a_{i+1}+b_{i+1}+b_{i+2})q^{i+2}+(a_i+b_i+b_{i+1})q^{i+1}\\
&+(a_{i-1}+b_{i-1}+b_{i})q^{i}+\dots,
\end{align*}
$c_{i}\ge c_{i-1}\ge \dots$ and $c_{i+2}\ge c_{i+3}\ge \dots$. Moreover, \[
c_{i+1}-c_{i}=(a_{i}-a_{i-1})+(b_{i+1}-b_{i-1})\ge 0
\]
and \[
c_{i+1}-c_{i+3}=(a_{i}-a_{i+2})+(b_{i}+b{i+1}-b_{i+2}-b_{i+3})\ge 0
\]
since $c_{x_{k-1}}$, $c_{x_h}$, and $c_{x_{k}}$ have $(i,i+1)$ $2$-peak. Hence, $c_{x_{k+1}}$ is unimodal and has $(i,i+1)$ or $(i+1,i+2)$ $2$-peak which implies that $c_{x_k}$ and $c_{x_{k+1}}$ satisfy induction hypothesis 1 and 2.

Secondly, assume that $c_{x_{k-1}}$ has $(i+1,i+2)$ $2$-peak and $c_{x_h}$ has $(i,i+1)$ $2$-peak. Then, since $c_{x_k}=c_{x_h}+c_{x_{k-1}}$ is unimodal, $c_{x_k}$ has $(i,i+1)$ or $(i+1,i+2)$ $2$-peak.
\item[(i)] The case when $c_{x_k}$ has $(i,i+1)$ $2$-peak\\
We have $a_{i}+b_i\ge a_{i+2}+b_{i+2}$. Let $c_{x_{k+1}}=c_{m''}q^{m''}+\dots+c_0$. Then, since \[
c_{x_{k+1}}(q)=\dots+(a_{i+1}+b_{i+1}+b_{i+2})q^{i+2}+(a_i+b_i+b_{i+1})q^{i+1}+(a_{i-1}+b_{i-1}+b_{i})q^{i}+\dots,
\]
we get $c_{i+1}\ge c_{i}\ge \dots$ and $c_{i+2}\ge c_{i+3}\ge \dots$ by $2$-peak of $c_{x_h}$ and $c_{x_{k-1}}$. In addition, \[
c_{i+1}-c_{i+3}=(a_i+b_i-a_{i+2}-b_{i+2})+(b_{i+1}-b_{i+3})\ge 0.
\]
Hence, $c_{x_{k+1}}$ is unimodal and has $(i,i+1)$ or $(i+1,i+2)$ $2$-peak which implies that $c_{x_{k+1}}$ satisfy induction hypothesis 1 and 2. 
\item[(ii)] The case when $c_{x_k}$ has $(i+1,i+2)$ $2$-peak
We have $a_{i+2}+b_{i+2}\ge a_{i}+b_{i}$. Let $c_{x_{k+1}}=c_{m''}u^{m''}+\dots+c_0$. Then, $c_i=a_{i-1}+b_{i-1}+b_i$ and we get $c_{i+1}\ge c_{i}\ge \dots$ and $c_{i+2}\ge c_{i+3}\ge \dots$ by $2$-peak of $c_{x_h}$ and $c_{x_{k-1}}$. In addition, \[
c_{i+2}-c_i=(a_{i+1}-a_{i-1})+(b_{i+1}+b_{i+2}-b_{i-1}-b_i)\ge 0.
\]
Hence, \( c_{x_{k+1}} \) is unimodal and has a \( 2 \)-peak at either \( (i+1,i+2) \) or \( (i+2,i+3) \), which implies that \( c_{x_k} \) and \( c_{x_{k+1}} \) satisfy the induction hypotheses 1 and 2.

One can observe that we can simply apply the same idea to other cases as well.
\end{proof}

A natural question is whether for a single lamination, the $c$-polynomial is almost interlacing. However, this is not true, as the following counterexample shows. 

\begin{example}
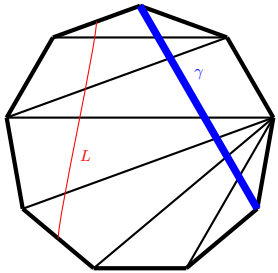
\begin{figure}[H]
    \centering
\begin{tikzpicture}[scale=0.6, transform shape]
    \def\n{9}
    \def\radius{3}

    \foreach \i in {1,...,\n} {
        \pgfmathsetmacro{\angle}{360/\n * \i + 50}
        \coordinate (V\i) at (\angle:\radius);
    }

    \foreach \i in {1,...,\n} {
        \pgfmathsetmacro{\nexti}{mod(\i, \n) + 1}
        \draw[ultra thick] (V\i) -- (V\nexti);
    }

    \draw[thick] (V2) -- (V9);
    \draw[thick] (V3) -- (V9);
    \draw[thick] (V3) -- (V8);
    \draw[thick] (V4) -- (V8);
    \draw[thick] (V5) -- (V8);
    \draw[thick] (V6) -- (V8);
    
    \draw[line width=1mm, blue] (V1) -- (V7) node[pos=0.4, above right] {$\gamma$};

    \coordinate (M1) at ($0.5*(V1) + 0.5*(V2)$);
    \coordinate (M2) at ($0.5*(V4) + 0.5*(V5)$);

    \draw[red, looseness=1] (M1) .. controls +(up:-0.5) and +(down:-0.5) .. (M2) node[pos=0.55, below right] {$L$};
\end{tikzpicture}

    \caption{Counterexample of \eqref{eq:ineqA} in single lamination}
    \label{fig:ineqasin}
\end{figure}

Figure~\ref{fig:ineqasin} shows that \eqref{eq:ineqA} need not hold for single laminations. In this example, the $c$-polynomial is $c_{x_\gamma}(q) = q^{2} + 6q + 7$.

\end{example}

We have demonstrated the unimodality of the coefficients of the rank polynomial for loop fence posets within the context of principal coefficients and plain arcs on unpunctured surfaces with a single lamination. To further investigate the properties of these polynomials, we propose the following conjecture regarding their log-concavity.

\begin{conjecture} \label{conj:log-concavity}
The coefficient sequence $(a_0, a_1, \ldots, a_n)$ of $c_{x_\gamma}(q) = \sum_{i=0}^n a_i q^i$ for a single lamination satisfies the log-concavity condition. 
\[
a_i^2 \geq a_{i-1} a_{i+1}
\]
\end{conjecture}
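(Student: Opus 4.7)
The plan is to follow the same inductive skeleton used in the proof of Theorem~\ref{thm:uni_single} for unimodality, but strengthen the inductive invariant so that it is preserved under each of the twelve recurrences for $c_{x_{k+1}}$ in terms of $c_{x_k},c_{x_{k-1}},c_{x_h}$. I would induct on the index $k$ along the sequence of arcs of $T$ crossed by $\gamma$, with base cases $c_{x_0}$ and $c_{x_1}$ of degree at most $1$, which are trivially log-concave.

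The key point is that a sum of log-concave polynomials is not log-concave in general, so we cannot simply combine log-concavity of $c_{x_k}$ and $c_{x_{k-1}}$ (or $c_{x_h}$) in the recurrences. Instead, the plan is to carry through the stronger hypothesis that the pair $(c_{x_{k-1}},c_{x_h})$ (respectively $(c_{x_{k-1}},c_{x_k})$ in the second major case) is \emph{jointly log-concave}: writing $f=\sum a_i q^i$ and $g=\sum b_i q^i$, we require that each of $f,g$ is log-concave with no internal zeros and that the cross-differences $a_ib_{i+1}-a_{i+1}b_i$ and $a_ib_{i+2}-a_{i+2}b_i$ are of one sign (or more precisely, satisfy a sign pattern compatible with the shared $2$-peak tracked in Theorem~\ref{thm:uni_single}). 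This is the standard kind of condition that guarantees $\lambda f + \mu g$ and $qf+g$ remain log-concave for $\lambda,\mu \geq 0$, via the identity
\[
(\lambda a_i+\mu b_i)^2 - (\lambda a_{i-1}+\mu b_{i-1})(\lambda a_{i+1}+\mu b_{i+1}) = \lambda^2 \Delta^f_i + \mu^2 \Delta^g_i + \lambda\mu\bigl(2a_ib_i - a_{i-1}b_{i+1}-a_{i+1}b_{i-1}\bigr),
\]
where the cross term is nonnegative precisely under the joint log-concavity hypothesis. A similar identity handles $qf + g$ after reindexing.

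The execution then splits into the two major geometric cases distinguished in the table on page~\pageref{table:two}. In each, one verifies for each of the six sub-cases that if $(c_{x_{k-1}},c_{x_h})$ satisfies the strengthened hypothesis, then so does the next pair (either $(c_{x_k},c_{x_{k+1}})$ or $(c_{x_{k-1}},c_{x_k})$, depending on which pair feeds the next step of the recursion). The bookkeeping mirrors the $2$-peak analysis in Theorem~\ref{thm:uni_single}: the location of the peak shifts by at most one at each step, and the cross inequalities needed for the $\lambda\mu$ term are consequences of unimodality together with the fact that both polynomials have peaks within one index of each other.

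The main obstacle is pinning down the \emph{precise} form of the joint log-concavity hypothesis that is simultaneously (a) strong enough to imply log-concavity of the individual sequences, (b) preserved by all twelve recurrence types, and (c) compatible with the shifts introduced by multiplication by $q$. Natural candidates such as requiring $(c_{x_{k-1}}, c_{x_h})$ to form a Sturm sequence or to satisfy Newton-type interlacing may be too strong in some recurrences and too weak in others. Extensive computational evidence in the unpunctured single-lamination setting suggests that the correct invariant does exist, and refining it is the crux of the argument; once identified, the verification of each of the twelve cases is a routine, if lengthy, bilinear calculation on the coefficient sequences, paralleling the unimodality proof.
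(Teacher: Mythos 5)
The statement you are trying to prove is stated in the paper only as a conjecture: the authors offer computational evidence (polygons up to $16$-gons) and explicitly remark that the standard log-concavity machinery via geometric lattices cannot apply here, since the $c$-polynomials can have constant term and leading coefficient both equal to $2$. So there is no proof in the paper to match your proposal against, and your text should be judged as an attempt at an open problem.

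As an attempt, it has a genuine gap, and you identify it yourself: the entire argument hinges on a ``joint log-concavity'' invariant for the pair $(c_{x_{k-1}},c_{x_h})$ that you never define. The identity you display shows that preserving log-concavity under $\lambda f+\mu g$ requires the cross-term inequality $2a_ib_i \ge a_{i-1}b_{i+1}+a_{i+1}b_{i-1}$, and this is precisely the hard content, not a bookkeeping consequence of the $2$-peak analysis. Unimodality plus peaks within one index of each other does not imply it; sums of log-concave unimodal sequences with adjacent peaks can fail to be log-concave, which is exactly why the unimodality proof of Theorem~\ref{thm:uni_single} does not upgrade for free. The standard sufficient condition (that the ratio sequences $a_{i+1}/a_i$ and $b_{i+1}/b_i$ are comparable, i.e.\ the pair is ``synchronized'') would make each single step work, but then you must prove that synchronization is itself preserved under all twelve recurrences, including the ones multiplying by $q$ (which shifts the ratio profile) and the second geometric case where $h=h(k)$ can be far from $k-1$, so the pair being fed forward is not simply the previous consecutive pair. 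None of this is carried out, and until the invariant satisfying your conditions (a)--(c) is exhibited and verified case by case, the proposal is a plausible strategy sketch rather than a proof; the conjecture remains open.
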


Through simulations, we have verified that the conjecture holds for numerous instances of single laminations on unpunctured surfaces, specifically for $n$-gons with $n$ up to 16. These results suggest that log-concavity may be an intrinsic property of single laminations.

\begin{example}

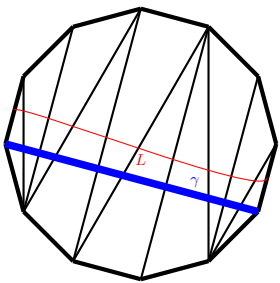
\begin{figure}[H]
\centering
\begin{tikzpicture}[scale=0.6, transform shape]
    \def\n{12}
    \def\radius{3}

    \foreach \i in {1,...,\n} {
        \pgfmathsetmacro{\angle}{360/\n * \i}
        \coordinate (V\i) at (\angle:\radius);
    }

    \foreach \i in {1,...,\n} {
        \pgfmathsetmacro{\nexti}{mod(\i, \n) + 1}
        \draw[ultra thick] (V\i) -- (V\nexti);
    }

    \draw[thick] (V1) -- (V10);
    \draw[thick] (V12) -- (V10);
    \draw[thick] (V2) -- (V10);
    \draw[thick] (V2) -- (V9);
    \draw[thick] (V2) -- (V8);
    \draw[thick] (V3) -- (V8);
    \draw[thick] (V3) -- (V7);
    \draw[thick] (V4) -- (V7);
    \draw[thick] (V5) -- (V7);

    \draw[line width=1mm, blue] (V6) -- (V11) node[pos=0.75, above] {$\gamma$};

    \coordinate (M1) at ($0.3*(V10) + 0.7*(V11)$);
    \coordinate (M2) at ($0.7*(V6) + 0.3*(V7)$);
    \coordinate (M5) at ($0.5*(V12) + 0.5*(V11)$);
    \coordinate (M6) at ($0.5*(V6) + 0.5*(V5)$);
    \coordinate (M7) at ($0.5*(V12) + 0.5*(V1)$);
    \coordinate (M8) at ($0.5*(V4) + 0.5*(V5)$);

    \draw[red, looseness=1] (M5) .. controls +(up:-0.5) and +(down:-0.2) .. (M6) node[pos=0.5, below] {$L$};
\end{tikzpicture}
    \caption{Example of triangulation of 16-gon with single lamination}
    \label{fig:single_lam}

\end{figure}

In figure \ref{fig:single_lam}, $c$-polynomial $c_{x_\gamma}$ of $\gamma$ is
\[c_{x_\gamma}(q)=2q^{8}+4q^7+6q^6+10q^5+11q^4+12q^3+9q^2+5q+2,\]
which shows that it is not only unimodal but also log-concave.

\end{example}

An intriguing aspect of this log-concavity is that the standard method using geometric lattices~\cite{AHK18}, a well-established tool for proving log-concavity, can not be applied here. The preceding example shows that the polynomial has a constant term equal to $2$ and a leading coefficient also equal to $2$. This indicates that it cannot arise from a geometric lattice.

We observe that when the lamination is not single—even in the case of elementary laminations—the resulting polynomial may fail to be log-concave (see the right side of Table~\ref{table:nonex}). However, for non-single laminations that do not permit repeated curves (that is, there are no two distinct lamination curves which are isotopic and share endpoints on the same boundary edges), the resulting polynomial often remains unimodal with internal zeros, even when the lamination is not elementary. If there are repeated curves in lamination, the unimodality fails (see the left side of Table~\ref{table:nonex}). Extending the analysis to such more general laminations is a natural next step, as the underlying combinatorial structure may reveal additional patterns in the coefficients of the multivariable expansions.

\renewcommand{\arraystretch}{1.2} 
\begin{table}[H]
\centering
\begin{tabular}{|c|c|}
\hline
\begin{tikzpicture}[scale=0.5, transform shape]
    \def\n{12}
    \def\radius{3}

    \foreach \i in {1,...,\n} {
        \pgfmathsetmacro{\angle}{360/\n * \i}
        \coordinate (V\i) at (\angle:\radius);
    }

    \foreach \i in {1,...,\n} {
        \pgfmathsetmacro{\nexti}{mod(\i, \n) + 1}
        \draw[ultra thick] (V\i) -- (V\nexti);
    }

    \draw[thick] (V1) -- (V10);
    \draw[thick] (V12) -- (V10);
    \draw[thick] (V2) -- (V10);
    \draw[thick] (V2) -- (V9);
    \draw[thick] (V2) -- (V8);
    \draw[thick] (V3) -- (V8);
    \draw[thick] (V3) -- (V7);
    \draw[thick] (V4) -- (V7);
    \draw[thick] (V5) -- (V7);

    \draw[line width=1mm, blue] (V6) -- (V11) node[pos=0.75, above] {$\gamma$};

    \coordinate (M1) at ($0.3*(V10) + 0.7*(V11)$);
    \coordinate (M2) at ($0.7*(V6) + 0.3*(V7)$);
    \coordinate (M3) at ($0.7*(V10) + 0.3*(V11)$);
    \coordinate (M4) at ($0.3*(V6) + 0.7*(V7)$);
    \coordinate (M5) at ($0.5*(V12) + 0.5*(V11)$);
    \coordinate (M6) at ($0.5*(V6) + 0.5*(V5)$);
    \coordinate (M7) at ($0.5*(V12) + 0.5*(V1)$);
    \coordinate (M8) at ($0.5*(V4) + 0.5*(V5)$);

    \draw[red, looseness=1] (M1) .. controls +(up:-0.5) and +(down:-0.5) .. (M2) node[pos=0.45, below] {$L$};
    \draw[red, looseness=1] (M3) .. controls +(up:-0.5) and +(down:-0.2) .. (M4);
    \draw[red, looseness=1] (M5) .. controls +(up:-0.5) and +(down:-0.2) .. (M6);
    \draw[red, looseness=1] (M7) .. controls +(up:0.5) and +(down:0.5) .. (M8);
\end{tikzpicture} & 
\begin{tikzpicture}[scale=0.5, transform shape]
    \def\n{12}
    \def\radius{3}

    \foreach \i in {1,...,\n} {
        \pgfmathsetmacro{\angle}{360/\n * \i}
        \coordinate (V\i) at (\angle:\radius);
    }

    \foreach \i in {1,...,\n} {
        \pgfmathsetmacro{\nexti}{mod(\i, \n) + 1}
        \draw[ultra thick] (V\i) -- (V\nexti);
    }

    \draw[thick] (V1) -- (V10);
    \draw[thick] (V12) -- (V10);
    \draw[thick] (V2) -- (V10);
    \draw[thick] (V2) -- (V9);
    \draw[thick] (V2) -- (V8);
    \draw[thick] (V3) -- (V8);
    \draw[thick] (V3) -- (V7);
    \draw[thick] (V4) -- (V7);
    \draw[thick] (V5) -- (V7);

    \draw[line width=1mm, blue] (V6) -- (V11) node[pos=0.75, above] {$\gamma$};

    \coordinate (M1) at ($0.3*(V10) + 0.7*(V11)$);
    \coordinate (M2) at ($0.7*(V6) + 0.3*(V7)$);
    \coordinate (M5) at ($0.5*(V12) + 0.5*(V11)$);
    \coordinate (M6) at ($0.5*(V6) + 0.5*(V5)$);
    \coordinate (M7) at ($0.5*(V12) + 0.5*(V1)$);
    \coordinate (M8) at ($0.5*(V4) + 0.5*(V5)$);

    \draw[red, looseness=1] (M1) .. controls +(up:0.5) and +(down:0.5) .. (M2) node[pos=0.5, below] {$L$};
    \draw[red, looseness=1] (M5) .. controls +(up:-0.5) and +(down:-0.2) .. (M6);
    \draw[red, looseness=1] (M7) .. controls +(up:0.5) and +(down:0.5) .. (M8);
\end{tikzpicture} \\
\hline
$\begin{array}{l}
c_{x_\gamma}(q)=2q^{10}+4q^9+6q^8+4q^7+8q^6\\
\quad+9q^5+12q^4+6q^3+6q^2+2q+2
\end{array}$ & 
$\begin{array}{l}
c_{x_\gamma}(q)=2q^{8}+4q^7+6q^6+10q^5\\
\quad+11q^4+12q^3+9q^2+5q+2
\end{array}$ \\
\hline
\end{tabular}
\caption{Non-examples of unimodality/log-concavity}
\label{table:nonex}
\end{table}

\subsection*{Acknowledgements}
This project benefited from conversations with Kyungyong Lee and Ralf Schiffler. We would also like to thank Esther Banaian for useful discussions. W.K. was partially supported by the Simons Foundation, grant SFI-MPS-T-Institutes-00007697, and the Bulgarian Ministry of Education and Science grant DO1-239/10.12.2024. K.L. was supported by the National Research Foundation of Korea(NRF) grant funded by the Korea government(MSIT) (No. RS-2025-02262988).

\printbibliography

\vspace{1cm}

International Center for Mathematical Sciences, Institute of Mathematics and Informatics, Bulgarian Academy of Sciences, Acad. G. Bonchev Str., Bl. 8, Sofia 1113, Bulgaria
\\
\textit{Email address: }\href{mailto:wonk@math.bas.bg}{\texttt{wonk@math.bas.bg}}\newline

Department of Mathematical Sciences, Ulsan National Institute of Science and Technology, Ulsan 44919, Republic of Korea\\
\textit{Email address: }\href{mailto:kjlee@unist.ac.kr}{\texttt{kjlee@unist.ac.kr}}\newline

Department of Mathematics, Yonsei University, 50 Yonsei-Ro, Seodaemun-Gu, Seoul 03722, Korea\\
\textit{Email address: }\href{mailto:eunsung@yonsei.ac.kr}{\texttt{eunsung@yonsei.ac.kr}}\newline

\end{document}